\documentclass[11pt,reqno]{amsart}

\usepackage[x11names]{xcolor}
\usepackage[linkcolor=Blue2,citecolor=black,colorlinks]{hyperref}

\usepackage{color, bm, amscd, tikz-cd}

\usepackage{mathdots}

\usepackage{amsmath,amsthm,amssymb, mathrsfs, bbm}

\allowdisplaybreaks

\theoremstyle{plain}
\newtheorem{theorem}{Theorem}[section]
\newtheorem{corollary}[theorem]{Corollary}
\newtheorem{lemma}[theorem]{Lemma}
\newtheorem{proposition}[theorem]{Proposition}

\theoremstyle{definition}
\newtheorem{definition}[theorem]{Definition}
\newtheorem{example}[theorem]{Example}

\newtheorem{remark}[theorem]{Remark}

\numberwithin{equation}{section}

\newcommand{\N}{\mathbb{N}}

\newcommand{\R}{\mathbb{R}}
\newcommand{\C}{\mathbb{C}}

\newcommand{\ltt}{\left(}
\newcommand{\rtt}{\right)}

\newcommand{\beas}{\begin{eqnarray*}}
	\newcommand{\eeas}{\end{eqnarray*}}
\newcommand{\bes} {\begin{equation*}}
	\newcommand{\ees} {\end{equation*}}
\newcommand{\be} {\begin{equation}}
	\newcommand{\ee} {\end{equation}}
\newcommand{\bea} {\begin{eqnarray}}
	\newcommand{\eea} {\end{eqnarray}}
\newcommand{\beals} {\begin{align*}}
	\newcommand{\eeals} {\end{align*}}
\newcommand{\beal} {\begin{align}}
	\newcommand{\eeal} {\end{align}}

\newcommand{\rnk} {\operatorname{rank}}

\newcommand{\bydef}{\stackrel{\rm def}{=}}
\usepackage{mismath}

\begin{document}
	\title[Completely monotone functions] {Holomorphic interpolation of multivariate completely monotone functions}
    
	\author[M. Bhowmik]{Mainak Bhowmik }
	\address{Indian Institute of Technology, Bombay, India }
	\email{\tt mainak.bhowmik943@gmail.com, mainak@math.iitb.ac.in}
	
	\author[A. Chatterjee]{Agniva Chatterjee}
	\address{Indian Institute of Technology Palakkad, Palakkad, Kerala, India }
	\email{\tt agnivac@iitpkd.ac.in, agnivachatterjee3@gmail.com}

	\author[M. Putinar]{Mihai Putinar}
	\address{University of California at Santa Barbara, CA} 
	\email{\tt mputinar@math.ucsb.edu}

	
	\keywords{Laplace transform, Stieltjes-Fantappi\`e transform, Hankel kernel, Wigner distribution, Weyl calculus, Herglotz-Nevanlinna function, tube domain, sums of exponentials, Hermite interpolation, rational approximation}
	
	\subjclass[2020]{44A60, 44A12, 32A26, 32M15, 32E30, 41A20, 47A60} 
	
	
	\begin{abstract}  
    The integral representation of completely monotone functions of several real variables as Laplace or Stieltjes–Fantappié transforms of positive measures opens a Hilbert space path toward their finite-point interpolation by simpler functions. We combine, within a non-commutative Radon transform framework, the matrix pencil realization of the positive semi-definite Hankel kernel associated with the sampling of a completely monotone function with Weyl’s operational calculus and Fantappi\`e's analytic calculus. The interpolation is achieved by finitely determined entire or rational functions, respectively, which are directionally completely monotone. In our relaxation scheme, the original positive measure is approximated by a sequence of specific Wigner distributions, which can also be regarded as analytic functionals. Throughout the interpolation process, tight bounds are enforced on the modulus or the real part of the holomorphic extension to the underlying tube domain.	
	\end{abstract}
	
	\maketitle
    \tableofcontents
	
	\section{Introduction}
	A descriptive title of this article could be {\it ``A non-commutative Radon transform approach to structured holomorphic interpolation of multivariate completely monotone functions via Hankel kernels, Wigner distributions and analytic duality"}. Alluding by ``structured" to one of the classical frameworks: Schur class of holomorphic functions with values in the unit disk, or Herglotz-Nevanlinna class of holomorphic functions with non-negative real part.
	
	Let $\Gamma$ be an acute closed convex solid cone in $\mathbb{R}^d$ and $\Gamma^*= \{ y\in \mathbb{R}^d : y\cdot x \geq 0 \text{ for } x\in \Gamma\}$ be its closed dual cone.
    \begin{definition}[Completely monotone functions]
		A real-valued function $f$ on $\operatorname{int} \Gamma$ is said to be {\em completely monotone function} if $f \in C^\infty(\operatorname{int} \Gamma)$ and 
	\begin{align}\label{Eq:CM-cond}
			(-1)^k D_{\xi^{(1)}} D_{\xi^{(2)}} \dots D_{\xi^{(k)}} F(p) \geq 0,
		\end{align}
		for all $p\in \operatorname{int} \Gamma$ and all $\xi^{(1)}, \dots, \xi^{(k)} \in \Gamma$, where
		$$
		D_{\xi} = \sum_{j=1}^{n} \xi_j \frac{\partial}{\partial p_j}, \ \ \ \xi=(\xi_1, \dots, \xi_n).
		$$
\end{definition}
Let $T_\Gamma = \operatorname{int} \Gamma + i  \ {\mathbb R}^d \subset \mathbb{C}^d$ be the tube domain over the cone $\Gamma$. Every completely monotone function $f$ on $\operatorname{int} \Gamma$ is the restriction of (complexified) Laplace transform of a positive measure $\mu$ supported on $\Gamma^*$, given by
$$ f(z) = \int_{\Gamma^\ast} e^{- z \cdot x} d\mu(x), \ \ z \in T_\Gamma.$$

Let $T_{\Gamma \times \mathbb{R}_+}$ denote the tube domain $(\operatorname{int} \Gamma \times \mathbb{R}_+) + i\ \mathbb{R}^{d+1} \subset \mathbb{C}^{d+1}$. Then, every homogeneous completely monotone function $g(p, p_0)$  of homogeneity $-1$ on the cone $\Gamma \times \mathbb{R}_+$ is the restriction of (complexified) Stieltjes-Fantappi\`e transform of a positive measure $\mu$ on $\Gamma^*$, defined as
$$ 
g(z,z_0) = \int_{\Gamma^\ast} \frac {d\mu(x)}{z_0 + z\cdot x}, \ \ (z, z_0) \in T_{\Gamma \times \mathbb{R}_+}.
$$
These characterizations of completely monotone functions via the integral representations associated to a {\it positive} measure go back to the celebrated memoir of Bernstein \cite{Bernstein} amended by multivariate generalizations by Gilbert, Bochner \cite{Bochner}, Henkin and Shannanin \cite{HS}, Thomas \cite{Thomas}.
The Laplace and the Stieltjes–Fantappi\`e transform of the same measure are connected via the Borel transform. The commutative diagram in Preliminaries well explains this relationship. 

The present work is devoted to an interpolation scheme adapted to the class of holomorphic functions obtained as the Laplace transform or the Stieltjes–Fantappi\`e transform of a positive measure on $\Gamma^*$.
The Laplace transforms and the Stieltjes–Fantappi\`e transforms are strictly contained within the Schur and Herglotz–Nevanlinna classes of holomorphic functions defined on the tube domains $T_\Gamma$ and $T_{\Gamma \times \mathbb{R}_+}$, respectively.

The complications of multivariate Carath\'eodory-Fej\'er or Hermite type holomorphic interpolation and approximation in these larger classes of functions are notorious, cf. \cite{BP2, Guillaume-Huard}. Compared to the classical single variable setting, 
	the lack of contractive divisors and continued fraction decompositions in two or more dimensions rule out an effective free parametrization of functions constrained by bounds on their
modulus or real part. To give a single striking example, the classification of extremal rays of the convex cone of holomorphic functions defined in a polydisk in two or more complex variables and possessing non-negative real part is not known \cite{Knese}.
The major advantage of working with holomorphic extensions of completely monotone functions is their free parametrization by positive Borel measures $\mu$ supported by the dual convex cone $\Gamma^\ast$. 

A consequence of Bernstein's theorem (see Theorem~\ref{Thm:BBG}) is the positive semi-definiteness of the Hankel kernel:
$$  \Xi (\alpha, \beta) = (-1)^{|\alpha + \beta|} \partial_z^{\alpha + \beta} f|_{z=\lambda} = \int_{\Gamma^\ast} x^{\alpha+\beta} e^{- \lambda \cdot x} d\mu(x), \ \ \alpha, \beta \in \mathbb{N}_0^d,$$
associated to the partial derivatives of $f$ at $\lambda \in \operatorname{int} \Gamma$. Similarly, the Hankel kernel associated to $g(z, z_0)$ can be formed.
This is the path we follow to merge Hilbert space methods and interpolation of functions depending on several complex variables.

Hankel kernels are well-known for detecting recognizable formal series from finite data, particularly rational functions given in a series expansion \cite{Fliess}. Not surprisingly, in our context, a finite-rank Hankel kernel as described above provides a certificate for a Laplace transform or a Stieltjes-Fantappi\`e transform of a positive measure to be a finite exponential sum or a finite sum of simple fractions, respectively. This is detailed in Theorem~\ref{Thm-multinode-flat} and Theorem~\ref{th:flat} of this article.
 An early account of exploiting Hankel kernels in the study of completely monotone series and totally positive kernels appeared in Section 7 of Chapter 2 of Karlin's monograph \cite{Karlin}. A moving point study of the jet data of a holomorphic function depending on a complex variable appears in \cite{Lubinsky}.

 The main theme of our article revolves around the question of interpolating and approximating the Laplace transform $f(z)$ by sums of exponentials of the form: 
 $$
 \sum_{j=1}^N b_j e^{-z\cdot \lambda^{(j)}}, \ \exists \ b_j \geq 0, \ \lambda^{(j)} \in \Gamma^*,$$ and the Stieltjes-Fantappi\`e transform $g(z, z_0)$ by sums of simple fractions of the form:
 $$\sum_{j=1}^N \frac{c_j}{z_0 + z \cdot q^{(j)}}, \  \exists \ c_j \geq 0, \ q^{(j)} \in \Gamma^*.$$
 
The problem essentially boils down to solving a multivariate truncated moment problem: from partial data involving the Taylor coefficients of $f(p), \ p \in \operatorname{int} \Gamma,$ (or $g(p, p_0), \ p \in \operatorname{int} \Gamma, p_0 >0$) at a point in the real cone $\Gamma$, produce a sequence of finitely supported positive measures $\mu_n$ which converges to the generating measure $\mu$ in weak-$\ast$ topology and interpolates the data.

A fundamental theorem of Richter and Tchakalov (Theorem~\ref{Thm:RT}) guarantees the existence of such finitely supported measures, and hence, of the interpolants as their Laplace or Stieltjes–Fantappi\`e transforms. However, this result is purely theoretical. A constructive approach to finding such interpolants is central to many applications and has been thoroughly studied for decades \cite{Beylkin, Kammler-2, Kammler, Koyama, Loy-Anderssen, Maergoiz}. The ample article by Henkin and Shananin  \cite{HS} is motivated by a mathematical economics; there one can find a multivariable generalization of Widder approximation method, applicable to both Laplace and Stieltjes-Fantappi\`e settings.  A lucid presentation on the interpolation of univariate completely monotone functions, as derived from solving Hausdorff's moment problem via finite differences, appears in Feller's note \cite{Feller}. In general, the univariate setting is much better understood due to classical by now results referring to the underlying moment problem and probability theory
\cite{Akhiezer, SSV}.

All attempts at solving the multivariate truncated moment problem in a variety of impersonations (cubature formulae, integral transforms inversions, polynomial optimization) via a computationally accessible manner underscore the intrinsic difficulty of the original interpolation problem \cite{Cools, Mnatsakanov, Lasserre}. To state that this problem is ill-posed is an understatement; for instance, the underlying measure $\mu$ can be moment-indeterminate. For general multivariate moment problems, the Hankel kernel method is well documented in recent works such as \cite{Mourrain-cubature, Mourrain, Evelyne}.  We will not, however, pursue our investigation along this traditional path.

Instead, we propose a direct Hilbert space operator factorization of the truncated, positive semi-definite kernel $\Xi$ arising from the prescribed jet data (i.e., the derivatives of $f(z)$ or $g(z, z_0)$ up to a fixed order at a point in the underlying real cone). This yields exact interpolating functions for $f(z)$ and $g(z, z_0)$ that are finitely determined within precise subsets of Schur and Herglotz–Nevanlinna functions, respectively. Along the rays emanating from the vertex of the cone $\Gamma$, these interpolants reduce to finite sums of exponentials with positive coefficients and finite sums of simple fractions with positive coefficients, respectively. This finite determinateness is assured by the data encoded in the Hankel kernel. 
More precisely, in a finite-dimensional Hilbert space setting, the interpolatory entire functions for $f(z)$ take the form
$$ F(z) = \langle \exp (- z_1 A_1 - \ldots - z_d A_d) v, v \rangle,$$
where $A = (A_1, \dots, A_d)$ is a $d$-tuple of \textit{non-commuting} real symmetric matrices whose joint numerical range is contained in the convex cone $\Gamma$. Similarly, for the Stieltjes–Fantappi\`e transform, the interpolatory rational functions take the form
$$ G(z, z_0) = \langle (z_0 + z_1 A_1 + \ldots z_d A_d)^{-1} v, v \rangle.$$
This method is developed in the proofs of Theorem~\ref{th:Lap conv} and Theorem~\ref{Thm:S-F-approximation}. The structure of these algebraic operations on linear pencils of non-commuting symmetric matrices is illuminated by the perspective of Weyl's functional calculus \cite{Anderson} and its Radon transform variant, the Wigner distribution. The latter object has compact semi-algebraic support; via its Laplace and Stieltjes–Fantappi\`e transforms, it elegantly encodes the qualitative features of the entire functions $F(z)$ and the rational functions $G(z, z_0)$, respectively \cite{Cohen, SW}. The preliminaries below contain the precise definitions.

The structure of our simple interpolating functions suggests an underlying connection to the Radon transform (see Definition~\ref{Def:Radon}). From a function-theoretic viewpoint, we investigate below certain configurations of this transform, allowing us to invert the Stieltjes–Fantappié transform using a Radon transform inversion from partially observed directions (see Theorem~\ref{th:inversion fan}).

The transition from commuting real coordinates, viewed as multipliers on certain function spaces, to a system of non-commuting symmetric matrices turns out to be the key technical ingredient in our interpolation scheme. The price to pay is a departure from the original complete monotonicity, while remaining within the larger Schur or Herglotz–Nevanlinna classes. To be precise, we prove below that the power series in $x \in \mathbb{R}^d$ associated to a tuple $A = (A_1, A_2, \ldots, A_d)$ of positive semi-definite matrices,
$$
L(x) = \exp(-x_1 A_1 - \ldots - x_d A_d) = \sum_\alpha C_\alpha x^\alpha,
$$
is completely monotone in the operator sense—that is, $(-1)^{|\alpha|} C_\alpha \geq 0$ for all $\alpha \in \mathbb{N}_0^d$—if and only if the matrices $A_j$ mutually commute. By means of this result we provide a sufficient condition for our interpolants to be completely monotone. See Corollary~\ref{Cor:comm-LT} and Corollary~\ref{Cor:comm-FT}.

To complete the circle and rescind the non-commutative relaxation resulting from our scheme, we recall below that the sharp question of
interpolating completely monotone functions by finite sums of exponentials with positive coefficients is equivalent to finding a commuting dilation of the $d$-tuple of matrices $A$ \cite{Dilcub1,Dilcub2}.
While the construction of such a commuting dilation of the specific tuple of symmetric matrices $A$ is decidable in the logical setting of real closed field theory, the computational difficulties
are comparable to any other multivariate cubature method. In summary, we replace the classical Radon X-ray transform with a non-commutative X-ray transform, aiming at bypassing the non-constructive existence ingredients in the theory of multivariate cubature.

The following {\it generic example} illustrates the key features of our proposed interpolation scheme. Start with a completely monotone function $f(z)$ defined in the cone
$\Gamma \subset {\mathbb R}^d$ and associated to a positive measure $\mu$ supported by the dual cone $\Gamma^\ast$. Fix a point $\lambda \in {\rm int} \ \Gamma$ and a positive integer $n$.
The positive semi-definiteness of Hankel kernel  $[(-1)^{|\alpha + \beta|} \partial_z^{\alpha + \beta} f|_{z=\lambda}]_{\alpha, \beta}$ carries a Gram matrix type factorization $\langle A^\alpha u, A^\beta u\rangle, \ \ |\alpha|, |\beta| \leq n,$ where $A = (A_1, \ldots, A_d)$ is a $d$-tuple of symmetric linear operators on a real Hilbert space of dimension less than or equal to
$\binom{n+d}{d}$. In general, the matrices $A_j$ do not commute; nonetheless, the monomial $A^\alpha$ above can be evaluated in any chosen order to determine its action on the cyclic vector $u=e^{-\lambda \cdot x/2}$. Then, we find a Wigner distribution $\mathcal{W}_n$ with the property that the Taylor coefficients of the entire function
$$ f_n(z) = (\mathcal{W}_n(x), \ e^{-z\cdot x}) = \langle \exp ( - A\cdot z) u, u \rangle$$
match those of $f(z)$ up to order $2n+1$ at $z=\lambda$.
In general, the function $f_n(x)$ is only directionally completely monotonic; that is, $t \mapsto f_n(\lambda + t p), \ \ t \geq 0$ is completely monotonic for every $p \in \Gamma$. In particular,
$$ f_n(\lambda + t p) = \sum_{k=1}^{\binom{n+d}{d}} c_k e^{-t \gamma_k},$$
where the coefficients $c_k$ and the exponents $\gamma_k$ are non-negative and they depend on the direction $p$. The geometric locus of the interpolation nodes $\gamma_k$ is encoded in the determinantal variety $\det [z_0 + z\cdot A] = 0$, which is intrinsically related to the joint numerical range of $A$.
Moreover, $\mathcal{W}_n$ converges in the sense of distribution to the measure $\mu$ and consequently, $f_n(z)$ converges to $f(z)$ in the space of holomorphic functions defined on the open tube $\Omega$. The analysis of the Stieltjes-Fantappi{\`e} transform is very similar, with a Wigner distribution interpreted as an analytic functional carried by the dual convex cone.

 The contents of this paper are organized as follows. The Preliminaries section introduces the Laplace and Stieltjes–Fantappi\`e transforms following the approach of Henkin and Shananin \cite{HS}; it also includes an introduction to the Wigner distribution associated with several non-commuting matrices, alongside a recap of the Fantappi\`e transform as a central tool in the analytic duality of convex domains in several complex variables. The convex cones of holomorphic functions on tube domains, arising from their complete monotonicity on the real part, are briefly described in Section~\ref{sec:convexity}, which also includes a discussion of their invariance under automorphisms of the underlying real cone. Section~\ref{sec:inversion} is devoted to the bijective correspondence between the Laplace and Stieltjes–Fantappi\`e transforms of positive measures. The inversions of the Borel and Stieltjes–Fantappi\`e transforms are presented in full detail, complementing the classical Widder sampling inversion formula. Section~\ref{Section:Finite-Deter} exploits the positivity structure arising from the finite sampling (with multiplicities) of a completely monotone function; here, we isolate the positive semi-definite Hankel kernel encoding these data and exploit its Hilbert space factorization to detect finitely determined completely monotone functions. Section~\ref{Sec:NC} contains the non-commutative operator calculus approach to the original interpolation problem, complemented by a qualitative analysis of the approximation process and a brief discussion of the convergence rate. Section~\ref{Sec:Matrix-CM} is devoted to the analysis of the complete monotonicity of matrix-valued functions. Section~\ref{Sec:Comm-dilation} provides a brief overview of the commutative dilation approach to constructing Gaussian-type cubature formulae in several variables. Finally, in Section~\ref{Sec:Exp}, we elaborate on two relevant examples: a pair of non-commuting $2 \times 2$ matrices and their associated Wigner distribution, and a low-order illustration of the interpolation algorithm on the three-dimensional Lorentz cone.

	\section{Preliminaries}

    \noindent \textbf{Notations:} The following notations are adopted for the rest of this article.
\begin{itemize}
\item[•] $\alpha = (\alpha_1, \dots, \alpha_d) \in \mathbb{N}_0^d \text{ and } |\alpha| = \sum_{j=1}^d \alpha_j$ and $\alpha ! = \prod_{j=1}^d \alpha_j !$.
\item[•] For $x=(x_1, \dots, x_d), \ x^\alpha = \prod_{j=1}^d x_j^{\alpha_j}$ and $\partial^\alpha_x f = \frac{\partial^{|\alpha|}f}{\partial x^\alpha}$.
\item[•] $\mathbb{R}^d_+ = (0, +\infty)^d$.
\item[•] $\mathbb H^+_R$ denotes the right half plane in $\C$.
\item[•] $\mathbb H^+_U$ denotes the upper half plane in $\C$.
\item[•] $\Gamma$ denotes an acute closed convex solid cone in $\mathbb{R}^d$.
\item[•] $\Gamma^*$ denotes the closed dual cone of $\Gamma$.
\item[•] $\mathcal E(\R^d)$ denotes the space of Schwartz class functions on $\R^d$.
\item[•] $\mathcal C^\infty_c(\Omega)$ denotes the space of compactly supported smooth functions on the domain $\Omega\subseteq\R^d$. 
\item[•] The abbreviation $\operatorname{PSD}$ stands for positive semi-definite.
\item[•] $\mathcal{O}(\Omega)$ denote the Frech\'et space of holomorphic function in $\Omega$, a domain $\Omega$ in $\mathbb{C}^d$, endowed with the topology of uniform convergence on compact subsets of $\Omega$. 
\end{itemize}
	\subsection{Completely monotone functions}
	Let $\Gamma$ be an acute closed convex solid cone in $\mathbb{R}^d$ and $\Gamma^*= \{ y\in \mathbb{R}^d : y\cdot x \geq 0 \text{ for } x\in \Gamma\}$ be its closed dual cone. 
	We denote the collection of all complete monotone functions in $\operatorname{int} \Gamma$ by $\operatorname{CM}(\Gamma)$.

\vspace{2mm}
     
    \noindent \textbf{Laplace transform:} We say that a nonnegative measure $\mu$ supported in $\Gamma^*$ is of {\em moderate growth} if $\exp (-p \cdot x) \in L^1(\Gamma^*, \mu)$, for every $p\in \operatorname{int} \Gamma$. The {\em Laplace transform} of a measure of moderate growth $\mu$ is given by
   \begin{align}\label{Eq:LT}
			F(p) \bydef \int_{\Gamma^*} \exp(-p\cdot y)\ d\mu(y), \ \ p \in \operatorname{int} \Gamma.
		\end{align}

       Let us denote the collection of non-negative finite measures supported on $\Gamma^*$ by $\mathcal{M}^+(\Gamma^*)$. Clearly, $\mathcal{M}^+(\Gamma^*)$ is a sub-collection of the measures supported on $\Gamma^*$ of moderate growth. The following theorem by Bernstein, Bochner, and Gilbert completely characterizes the Laplace transforms of these measures in terms of completely monotone functions on $\Gamma$ that are continuous up to the boundary of $\Gamma$.
	\begin{theorem}[Bernstein, Bochner, Gilbert \cite{Bernstein, Bochner}] \label{Thm:BBG}
		Let $\Gamma$ be a closed, acute convex solid cone with $\Gamma^*$ as its dual cone. Then a function $F\in C(\Gamma)$ is completely monotone on $\operatorname{int} \Gamma$ if and only if there exists a $\mu\in \mathcal M^+(\Gamma^*)$ such that 
		\begin{align*}
			F(p)= \int_{\Gamma^*} e^{-p\cdot x} \ d\mu(x), \ \ p \in \operatorname{int} \Gamma.
		\end{align*}
	\end{theorem}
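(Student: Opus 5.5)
The easy direction is a direct computation. If $F(p)=\int_{\Gamma^*}e^{-p\cdot x}\,d\mu(x)$ with $\mu\in\mathcal M^+(\Gamma^*)$, then $|e^{-p\cdot x}|\le 1$ for $p\in\Gamma$ and $x\in\Gamma^*$. Dominated convergence therefore gives $F\in C(\Gamma)$. On $\operatorname{int}\Gamma$, differentiation under the integral sign is justified by domination: for $p$ in a compact subset of $\operatorname{int}\Gamma$ one has $p\cdot x\ge c|x|$, so polynomial factors in $x$ are absorbed by the exponential. This yields
\[
(-1)^kD_{\xi^{(1)}}\cdots D_{\xi^{(k)}}F(p)=\int_{\Gamma^*}\prod_{j=1}^k(\xi^{(j)}\cdot x)\,e^{-p\cdot x}\,d\mu(x)\ge 0,
\]
because each factor $\xi^{(j)}\cdot x$ is nonnegative for $\xi^{(j)}\in\Gamma$ and $x\in\Gamma^*$.

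For the converse I would use Choquet theory, or equivalently a semigroup/Bernstein argument in several variables. Consider the convex cone of functions $F\in C(\Gamma)$ that are completely monotone on $\operatorname{int}\Gamma$, and normalize by $F(0)=1$. Complete monotonicity along any ray shows that $F$ is nonincreasing in every direction of $\Gamma$, so $0\le F\le F(0)$; applying the same fact to difference quotients gives $F\ge0$ after all iterated differences. The normalized set is therefore a convex set of functions bounded by $1$. The first step is compactness in the topology of pointwise convergence on $\Gamma$: one can use that the directional derivatives are controlled by $F(0)/\text{dist}$ on interior points, which gives equicontinuity on compacts of $\operatorname{int}\Gamma$, and then extend to the boundary. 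Continuity at the boundary is delicate; I would handle it by first proving the representation for $F(\cdot+\epsilon e)$, with $e\in\operatorname{int}\Gamma$, and then letting $\epsilon\to0$ using weak-$*$ compactness of measures with mass $\le F(0)$ on $\Gamma^*$ (compactify by adding a point at infinity). The point at infinity carries no mass, precisely because $F$ is continuous at $0$ along $\Gamma$.

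The second step identifies the extreme points. Take $F$ extreme and any $a\in\Gamma$. The decomposition $F=(F-F(\cdot+a))+F(\cdot+a)$ writes $F$ as a sum of two completely monotone functions: the difference is CM since $-D$ applied to a CM function remains CM, integrated along the segment. Extremality then forces $F(\cdot+a)=c(a)F$. The map $c$ is multiplicative and continuous on $\Gamma$ with values in $[0,1]$. If $c$ vanishes somewhere, then $F\equiv0$ near the interior, which contradicts $F(0)=1$ by continuity. Otherwise $c(a)=e^{-a\cdot x}$, and $\Gamma$ spans $\mathbb R^d$, so $x$ is uniquely determined. Monotonicity gives $c\le1$ on $\Gamma$, hence $a\cdot x\ge0$ for all $a\in\Gamma$, i.e. $x\in\Gamma^*$. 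Choquet's theorem (or Krein--Milman together with a barycenter argument on the compact base) then yields $F(p)=\int e^{-p\cdot x}\,d\mu(x)$ with $\mu\ge0$ on $\Gamma^*$ and finite mass $F(0)$.

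I expect the main obstacle to be the compactness and metrizability setup needed to invoke Choquet's theorem, together with showing that no mass escapes to infinity or collapses. An alternative that I would try if this becomes messy is the following. Fix $e\in\operatorname{int}\Gamma$ and observe that $F$ is holomorphically extendable to $T_\Gamma$. Along each ray $t\mapsto F(p+t\xi)$, the one-variable Bernstein theorem applies. One then forms the positive definite function $y\mapsto F(p+iy)$, proves its positive definiteness by showing that the kernel $F(p_j+p_k)$ is PSD (itself a consequence of complete monotonicity via the Hankel-type positivity), and invokes Bochner's theorem to obtain a measure whose support lies in $\Gamma^*$ by the boundedness on $\Gamma$.
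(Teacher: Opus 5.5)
The paper does not prove this statement; it is quoted from Bernstein and Bochner, so your argument has to stand on its own. Your direct implication is correct. The converse, however, rests on a compactness claim that fails. The set $\{F\in C(\Gamma): F \text{ completely monotone on } \operatorname{int}\Gamma,\ F(0)=1\}$ is not closed under pointwise convergence on $\Gamma$. Since $\Gamma$ is acute, $\Gamma^*$ is solid. For $x_0\in\operatorname{int}\Gamma^*$, the functions $F_n(p)=e^{-np\cdot x_0}$ lie in this set, yet $F_n(p)\to0$ for every $p\in\Gamma\setminus\{0\}$, so the pointwise limit is the indicator of $\{0\}$. Your bound on directional derivatives by $F(0)/\operatorname{dist}(p,\partial\Gamma)$ gives equicontinuity only on compact subsets of $\operatorname{int}\Gamma$, and this example shows it cannot be pushed to the boundary.

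The $\epsilon$-shift does not repair this. To obtain a representation of $F(\cdot+\epsilon e)$ you would have to run the same Choquet argument on the same non-compact set, so that step is circular. The subsequent weak-$*$ passage on $\Gamma^*\cup\{\infty\}$ is fine once shifted representations are available. The same defect affects your extreme-point step: you exclude $c(a)=0$ by continuity at the vertex. In any closure that makes the base compact there are additional extreme points: the zero function if you work on $\operatorname{int}\Gamma$, or discontinuous semicharacters such as the indicator of $\{0\}$ or of a face of $\Gamma$ if you insist on pointwise convergence on all of $\Gamma$. These carry exactly the mass that escapes to infinity, and your argument never accounts for them.

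The missing idea is to use a \emph{cap} on the open cone instead of a base. Fix $e\in\operatorname{int}\Gamma$ and let $K$ be the set of completely monotone $G$ on $\operatorname{int}\Gamma$ with $\ell(G):=\sup_{\operatorname{int}\Gamma}G=\lim_{t\downarrow0}G(te)\le 1$, carrying the Fr\'echet topology of locally uniform convergence of all derivatives. This $K$ is convex, metrizable and compact. Indeed, the one-variable estimate $(-1)^kf^{(k)}(t)\le C_k t^{-k}\sup f$ along rays, together with $0\le(-1)^kD_{\xi^{(1)}}\cdots D_{\xi^{(k)}}G\le\frac{1}{k!}(-1)^kD^k_{\xi^{(1)}+\cdots+\xi^{(k)}}G$ (expand the right-hand side; every term is nonnegative), bounds all derivatives locally uniformly. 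So Arzel\`a--Ascoli applies and the sign conditions pass to the limit. The functional $\ell$ is additive but only lower semicontinuous, which is why one imposes $\ell\le1$ rather than $\ell=1$.

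Your semigroup argument, run with $\ell$ in place of evaluation at $0$, then shows $\operatorname{ext}K\subseteq\{0\}\cup\{e^{-p\cdot x}:x\in\Gamma^*\}$. If $c(a)=0$, then $c(a/2^m)=0$ for all $m$, so $G\equiv0$ on $\bigcup_m(a/2^m+\operatorname{int}\Gamma)=\operatorname{int}\Gamma$. In $K$ one has $e^{-p\cdot x}\to0$ as $|x|\to\infty$, so this set is closed and homeomorphic to $\Gamma^*\cup\{\infty\}$, with $\infty$ corresponding to $0$. Choquet's theorem applied to $F/F(0)\in K$ (assuming $F(0)>0$) gives $F(p)=\int_{\Gamma^*}e^{-p\cdot x}\,d\mu(x)$ on $\operatorname{int}\Gamma$; the weight at $0$ is invisible there. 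Only at this point does continuity of $F$ at the vertex enter: by monotone convergence, $\mu(\Gamma^*)=\lim_{t\downarrow0}F(te)=F(0)$.

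Your fallback via Bochner's theorem is circular as written. It presupposes the holomorphic extension of $F$ to $T_\Gamma$ and the positive definiteness of $y\mapsto F(p+iy)$, both of which are normally derived from the representation you are trying to prove.
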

	The continuity of $F$ on $\Gamma$ implies that the total mass $\mu(\Gamma^*)$ is equal to $F(0)$, the value of $F$ at the vertex of $\Gamma$.
Motivated by this theorem, for a closed acute convex cone $\Gamma$, by $\widetilde{\operatorname{CM}}(\Gamma)$, we denote the space $\operatorname{CM}(\Gamma)\cap C(\Gamma)$.

As a corollary of this theorem, Henkin and Shananin showed that {\em a function $F\in\operatorname{CM(\Gamma)}$ has an integral representation 
    \eqref{Eq:LT} if and only if $\mu$ is a measure of moderate growth}. See \cite[Corollary 2.2]{HS}.

\vspace{2mm}
We extend the Laplace transform \eqref{Eq:LT} of a measure with moderate growth, as a holomorphic function on the tube domain $T_\Gamma = \operatorname{int} \Gamma + i \ \mathbb{R}^d$ in $\mathbb{C}^d$ by complexifying the $p$ variable as below:
$$
F(z) = \int_{\Gamma^*} e^{-z\cdot x} d\mu(x), \ \ z\in T_{\Gamma}.
$$
Since the extension is unique, we do not distinguish between $F(p)$ on the real cone and its complexification on the tube domain. 

Note that $F(z)$ is a bounded holomorphic function in $T_{\Gamma}$ whenever $\mu$ is a finite measure. Indeed, 
$$
|F(z)| \leq \int_{\Gamma^*} \exp {(- \operatorname{Re} z \cdot x)}\  d\mu(x) \leq \mu(\Gamma^*)
$$
 as $\operatorname{Re} z \in \operatorname{int} \Gamma$ and $\operatorname{Re} z \cdot x \geq 0$ for $x\in \Gamma^*$.
    
    \vspace{2mm}
    
   \noindent \textbf{Stieltjes-Fantappi\`e transform:} For $\mu \in \mathcal{M}^+(\Gamma^*)$, we define the {\em Stieltjes-Fantappi\`e transform} of $\mu$ by
	\begin{align}\label{Eq:FT}
		\Phi(p, p_0) \bydef \int_{\Gamma^*} \frac{d\mu(x)}{p_0 + p\cdot x}, \ \ \ p\in \Gamma \text{ and } p_0 >0.
	\end{align}
Furthermore, by $\operatorname{CM}_H(\Gamma\times \mathbb{R}_+)$ we denote the space of complete monotone functions in $\operatorname{int} \Gamma \times \mathbb{R}_+$ that are continuous in $\Gamma\times\R_+$ and homogeneous of degree $-1$.

 \vspace{2mm}

\noindent \textbf{Borel transform:}
     The Laplace transform \eqref{Eq:LT} and the Stieltjes-Fantappi\`e transform \eqref{Eq:FT} for measures in $\mathcal M^+(\Gamma^*)$ are related via the following commutative diagram:
     \[
	\begin{tikzcd}
		\mathcal M^+(\Gamma^*) \arrow[rr, "F"] \arrow[dr, "\Phi"'] &     & \widetilde{\operatorname{CM}}(\Gamma) \arrow[dl, "\mathcal B"] \\
		& \operatorname{CM}_H(\Gamma\times \R_+ )&,  
	\end{tikzcd}
	\]
    where $\mathcal B: \widetilde{\operatorname{CM}}(\Gamma)\rightarrow  \operatorname{CM}_H(\Gamma\times \R_+ )$ is the {\em Borel transform} given by
    \bea\label{eq:def borel}
	\mathcal B(F)(p,p_0) \bydef \int_0^\infty F(tp)e^{-tp_0} dt,\ \ \ p\in \Gamma \text{ and } p_0 >0.
	\eea
    By showing that the Borel transform is an isomorphism, Henkin and Shananin \cite[Theorem 2.1$'$]{HS} completely charecterizes the Stieltjes-Fantappi\`e transform of the class $\mathcal M^+(\Gamma^*)$ by $\operatorname{CM}_H(\Gamma\times \mathbb{R}_+)$.

     \begin{theorem} \label{Thm:SF}
		A function $\Phi(p, p_0)$, $p\in \operatorname{int} \Gamma, p_0 > 0$ can be represented in the form \eqref{Eq:FT} with a nonnegative measure supported in $\Gamma^*$ if and only if $\Phi\in \operatorname{CM}_H(\Gamma\times \mathbb{R}_+)$.
	\end{theorem}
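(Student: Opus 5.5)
The forward direction is a direct computation. Suppose $\Phi(p,p_0)=\int_{\Gamma^*} (p_0+p\cdot x)^{-1}\,d\mu(x)$ with $\mu\ge 0$ supported in $\Gamma^*$. Then $\Phi$ is clearly homogeneous of degree $-1$. For $\xi=(\xi',\xi_0)\in\Gamma\times\R_+$ we have
$$D_\xi (p_0+p\cdot x)^{-1}=-(\xi_0+\xi'\cdot x)(p_0+p\cdot x)^{-2}.$$
Iterating gives
$$(-1)^k D_{\xi^{(1)}}\cdots D_{\xi^{(k)}}\Phi = k!\int \prod_j(\xi^{(j)}_0+\xi^{(j)\prime}\cdot x)\,(p_0+p\cdot x)^{-k-1}\,d\mu \ \ge 0,$$
because $\xi'\cdot x\ge 0$ on $\Gamma^*$. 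Differentiation under the integral is justified by dominated convergence, using $p_0+p\cdot x\ge p_0>0$ locally uniformly. Continuity up to $\Gamma\times\R_+$ follows the same way. One point needs care: finiteness of the integral requires $\mu$ to be finite, or at least $\int (1+|x|)^{-1}d\mu<\infty$. Here $\mu$ is taken in $\mathcal M^+(\Gamma^*)$, as in the definition \eqref{Eq:FT}.

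For the converse, given $\Phi\in\operatorname{CM}_H(\Gamma\times\R_+)$, I would reduce to Theorem~\ref{Thm:BBG} through the Borel transform. Since $\Gamma\times\R_+$ is an acute closed solid cone with dual $\Gamma^*\times\R_+$, Theorem~\ref{Thm:BBG} applied in dimension $d+1$ gives a measure $\nu\ge0$ on $\Gamma^*\times\R_+$ with
$$\Phi(p,p_0)=\int e^{-p\cdot x-p_0 s}\,d\nu(x,s).$$
Strictly, the theorem needs continuity at the vertex; where it fails I would instead use the Henkin--Shananin moderate growth corollary quoted after it. Homogeneity of degree $-1$ means $\Phi(tp,tp_0)=t^{-1}\Phi(p,p_0)$. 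Taking Laplace transforms, this forces $\nu$ to be invariant in the sense $\nu(t\cdot E)=t\,\nu(E)$ for the dilation $(x,s)\mapsto(tx,ts)$; the relation is checked by comparing Laplace transforms, which uniquely determine $\nu$.

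I then disintegrate $\nu$ along rays. Take the cross-section $s=1$, writing $(x,s)=(s y,s)$, with the face $s=0$ handled separately. The scaling law gives $d\nu=ds\otimes d\mu(y)$ for a positive measure $\mu$ on $\Gamma^*$, plus a possible component on $\{s=0\}$. A measure on $\{s=0\}$ that is homogeneous of degree $1$ would have Laplace transform in $p$ homogeneous of degree $-1$ and independent of $p_0$. I expect continuity on $\Gamma\times\R_+$ together with boundedness as $p_0\to\infty$ to kill this component, since the $\mu$-part tends to $0$ as $p_0\to\infty$. With the component removed,
$$\Phi=\int_{\Gamma^*}\int_0^\infty e^{-s(p_0+p\cdot y)}\,ds\,d\mu(y)=\int_{\Gamma^*}\frac{d\mu(y)}{p_0+p\cdot y}.$$
An alternative route is to set $F(p)=\Phi(p,\cdot)$ inverted in $p_0$ (Bernstein in $p_0$) and invoke the Borel isomorphism $\mathcal B$ from the diagram.

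The main obstacle I anticipate is the disintegration step: rigorously deducing the product structure $ds\otimes\mu$ from homogeneity, and excluding mass on the boundary face $s=0$. Both need careful use of uniqueness of the Laplace transform and of the boundary continuity hypothesis.
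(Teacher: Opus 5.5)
Your route is viable but differs from the paper's. The paper gives no proof of its own. It cites Henkin--Shananin, where the statement follows from the claim that the Borel transform $\mathcal B:\widetilde{\operatorname{CM}}(\Gamma)\to\operatorname{CM}_H(\Gamma\times\R_+)$ is a bijection. That is essentially the alternative you mention at the end.

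In that route one inverts $\Phi$ in $p_0$ along rays, using the Bromwich integral of Section~\ref{sec:inversion}, to get $F\in\widetilde{\operatorname{CM}}(\Gamma)$. Theorem~\ref{Thm:BBG} in dimension $d$ then gives a measure $\mu$, automatically finite since $\mu(\Gamma^*)=F(0)$. Finally $\Phi=\mathcal B F$ is the Stieltjes--Fantappi\`e transform of $\mu$. The real work there, which the paper delegates, is showing that $\mathcal B^{-1}\Phi$ is completely monotone and continuous at the vertex; the paper's inversion theorem only checks $\psi\circ\mathcal B=\mathrm{id}$.

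Your route replaces this with purely measure-theoretic steps: Bernstein's theorem on $\Gamma\times[0,\infty)$, uniqueness of Laplace transforms, and a disintegration along rays. It never needs an inverse Laplace transform. You always need the moderate-growth version of Bernstein, not just sometimes, because a nonzero function homogeneous of degree $-1$ is never continuous at $(0,0)$. The price is that two conclusions the Borel route gets for free must be extracted by hand from continuity at the points $(0,p_0)$. As written, your proposal does not yet do this for either.

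\textbf{The face $s=0$.} Boundedness as $p_0\to\infty$ cannot kill $G(p)=\int e^{-p\cdot x}\,d\nu_0(x)$, since $G$ does not depend on $p_0$ at all. What you need is $\Phi(p,p_0)\to 0$ as $p_0\to\infty$. This follows from $\Phi(p,p_0)=p_0^{-1}\Phi(p/p_0,1)$ and continuity at $(0,1)$. More directly, for $q\in\operatorname{int}\Gamma$ you have $\Phi(\epsilon q,p_0)\ge G(\epsilon q)=\epsilon^{-1}G(q)$. This contradicts continuity at $(0,p_0)$ unless $G\equiv 0$, i.e.\ $\nu_0=0$.

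\textbf{Finiteness of $\mu$.} This step is missing. The disintegration plus finiteness of $\Phi$ on the open cone only gives $\int(1+|y|)^{-1}\,d\mu(y)<\infty$; for example $\mu=\sum_k\delta_{k^2}$ when $d=1$ satisfies this but is infinite. Since \eqref{Eq:FT} requires $\mu\in\mathcal M^+(\Gamma^*)$, add one line. By monotone convergence along $p=\epsilon q$ with $\epsilon\downarrow 0$, together with continuity at $(0,p_0)$, you get $\mu(\Gamma^*)=p_0\,\Phi(0,p_0)<\infty$. Dominated convergence, with integrand at most $1/p_0$, then extends the identity to $p\in\partial\Gamma$.

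\textbf{The disintegration.} The step you flagged as the main obstacle is routine. In the coordinates $(y,s)=(x/s,s)$ the dilations act only on $s$. Hence $\tilde\nu(B\times[2^k,2^{k+1}))=2^k\,\tilde\nu(B\times[1,2))$. Summing over $k<0$ shows that $\mu(B):=\tilde\nu(B\times(0,1])$ is finite for bounded $B$, and that $\tilde\nu(B\times(0,a])=a\,\mu(B)$. Therefore $\tilde\nu=\mu\otimes ds$.

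\textbf{Forward direction.} The bound $p_0+p\cdot x\ge p_0$ alone does not dominate $\prod_j(\xi^{(j)}_0+\xi^{(j)\prime}\cdot x)\,(p_0+p\cdot x)^{-k-1}$ against a finite $\mu$. Use instead $p\cdot x\ge c|x|$ on $\Gamma^*$, valid for $p$ in a compact subset of $\operatorname{int}\Gamma$.
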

    
	In Section \ref{sec:inversion}, we discuss the inversion of the Borel and  Stieltjes-Fantappi\`e transform in more detail.
	
	Note that, $\operatorname{int} \Gamma \times \mathbb{R}_+ \subseteq \mathbb{R}^{d+1}$ is the domain of continuity of the Stieltjes-Fantappi\`e transform $\Phi(p, p_0)$ of the measure $\mu$. 
    Let $T_{\Gamma \times \mathbb{R}_+}$ be the tube domain $(\operatorname{int} \Gamma \times \mathbb{R}_+) + i\ \mathbb{R}^{d+1}$ inside $\mathbb{C}^{d+1}$. A point in $T_{\Gamma \times \mathbb{R}_+}$ is written as $(z, z_0) = (p + i q, p_0 + i q_0) \in \mathbb{C}^d \times \mathbb{C}$, where $p \in \Gamma, p_0>0, q\in \mathbb{R}^d$ and $q_0 \in \mathbb{R}$. 
    
    With this notation we complexify the Stieltjes-Fantappi\`e transform \eqref{Eq:FT} in the following way:
	\begin{align} \label{Eq:complex-FT}
		\Phi(z, z_0) = \int_{\Gamma^*} \frac{\mu(dx)}{z_0 + z\cdot x}, \ \ \ (z, z_0)\in T_{\Gamma \times \mathbb{R}_+}.
	\end{align}
    It is easy to see that $\Phi(z, z_0)$ (as above) is holomorphic in $T_{\Gamma \times \mathbb{R}_+}.$
Since the extension of $\Phi(p, p_0)$ to the tube domain is unique, we keep writing $\Phi$ to denote the Stieltjes-Fantappi\`e transform on the real cone and its complexification.

\begin{definition}
For a domain $\Omega$ in $\mathbb{C}^d$, the {\em Herglotz-Nevanlinna class} consists of holomorphic functions in $\Omega$ having non-negative real parts. 
\end{definition}

Integral representation of Herglotz-Nevanlinna functions in the polydisk can be found in \cite{KP} and for an application see \cite{BP, BP2}. The Stieltjes-Fantappi\`e transform $\Phi(z, z_0)$ of $\mu$ is in the Herglotz-Nevanlinna class of the tube domain $T_{\Gamma \times \mathbb{R}_+}$. Indeed, 
$$
\operatorname{Re} \Phi(z, z_0) = \int_{\Gamma^*} \frac{(\operatorname{Re} z_0 + \operatorname{Re} z \cdot x)}{|z_0 + z\cdot x|^2} d\mu(x) \geq 0,
$$
as $\operatorname{Re} z_0 >0$ and $\operatorname{Re} z \in \operatorname{int} \Gamma$.

\subsection{Symmetric matrix tuples and Wigner distributions} \label{Sub:Wigner}
Let $d \geq 1$ be an integer and let $A = (A_1, A_2, \ldots, A _d)$ be a $d$-tuple of self-adjoint matrices acting on a finite dimensional real Hilbert space $H$.
In general, if the linear operators $A_j$ do not mutually commute, a notion of joint spectrum with all expected functoriality properties does not exist. As a crude substitute, the {\it joint numerical range} is well defined:
$$ W(A) = \{\langle Au, u \rangle \bydef (\langle A_1 u,u\rangle, \ldots, \langle A_d u, u \rangle) \in \mathbb{R}^d : \ \| u \| =1, \ u\in H \}.$$
A classical theorem due to Toeplitz and Hausdorff  asserts that $W(A)$ is convex for $d=2$. The convex hull of $W(A)$ can be defined similarly by replacing the pure states $\langle \cdot, u \rangle u$ by density matrices, and then it is a convex subset of ${\mathbb R}^d$. See \cite{PSW} for details.

A celebrated result of Kippenhahn, recently generalized to any number of dimensions $d$ connects the complex projective hypersurface
$$ \mathcal{V}(A) = \{ (z_0, z_1, \ldots, z_d) \in {\mathbb C}^{d+1}: \ \ \det [ z_0 + z_1 A_1 + \ldots + z_d A_d] = 0\}$$
to the geometry of the joint numerical range. Roughly speaking, the convex hull of the smooth real points in the projective dual variety $\mathcal{V}(A)^\ast$ lying in the affine chart $z_0 =1$
is equal to the convex hull of $W(A)$. We refer to Theorem 1.2 in \cite{PSW} for the exact statement and a complete proof. The complex projective hypersurface $\mathcal{V}(A)$ is known in the operator theory community as the {\it projective spectrum} of the $d$-tuple $A$. A series of non-trivial ramifications of this concept to 
group representations and complex geometry are recorded in the recent monograph \cite{Yang}. 

For a vector $u \in H$ and a function $\phi$ in the Schwartz class ${\mathcal E}({\mathbb R}^d)$, the self-adjoint functional calculus yields a (tempered) distribution ${\mathcal W}_u(A)$ given by
\begin{align}\label{Eq:Wigner}
    ({\mathcal W}_u(A)(\xi), \phi ( x \cdot \xi))  \bydef \langle \phi(x \cdot A)u, u \rangle,
\end{align}
known in quantum optics as {\it Wigner distribution}. The support of  ${\mathcal W}_u(A)$ is compact and it is contained in the convex hull of the numerical range $W(A)$. So, \eqref{Eq:Wigner} remains valid for any smooth function $\phi$ in $\mathbb{R}^d$.
The Fourier transform of $\mathcal{W}_u$
is an essentially bounded function as the operator $\exp (- i \xi \cdot A)$ is unitary for every $\xi \in {\mathbb R}^d$.

This is an instance of Weyl's non-commutative operational calculus, originally defined by Fourier inversion \cite{Anderson}. The resulting operator-valued distribution $T(A)$ is called by Anderson the
{\it joint spectral distribution} of the $d$-tuple $A$ while its support serves as a joint spectrum. Functoriality and other qualitative properties of $T(A)$ are proved in \cite{Anderson}.
In particular, \cite[Lemma~3.8]{Anderson} shows that the order of ${\mathcal W}_u(A)$ is at most $\frac{d}{2}+1$, if $d$ is even, and at most $\frac{d+1}{2}$, if $d$ is odd.  An independent  thorough analysis of the qualitative properties of the Wigner distribution was carried out in \cite{SW}. The recent monograph \cite{Cohen} also has relevant material for Wigner distribution analysis. An example of Wigner distribution has been illustrated from various perspective in Example~\ref{Sub:pair-nc-projections}.

\subsection{Analytic duality in several complex variables}
For a compact set $K\subset\C^d$, let $\mathcal O(K)$ be the space of germs of holomorphic functions defined in neighbourhoods of $K$. 
The space of {\em analytic functionals}, denoted by $\mathcal O'(K)$ is defined as the topological dual of $\mathcal O(K)$, with respect to the inductive limit topology. Equivalently, a linear functional $L : \mathcal O(K)\rightarrow\C$, belongs to $\mathcal O'(K)$ if and only if, for any open neighborhood $U$ of $K$, there exists $C_U>0$ such that
\[
|L(f)|\leq C_U \sup_{z\in U} |f(z)|,\quad \text{for all } f\in \mathcal O(U).
\]
 When $K$ is ($\C$-)convex, a classical theorem due to  Aizenberg and independently Martineau shows that the {\em Fantappi{\`e} transform} on $\mathcal O'(K)$, given by
\beas
\mathfrak{F}\ltt L \rtt(z) \bydef L \ltt\frac{1}{1+\left\langle\cdot,z\right\rangle}\rtt,\quad L \in\mathcal O'(K), z\in K^*,
\eeas
implements a topological duality pairing between ${\mathcal O}'(K)$ and ${\mathcal O}(K^\ast)$. Above
 $\left\langle\cdot,\cdot\right\rangle$ is the non-Hermitian product in $\C^d$ and $K^*=\{z\in\C^d:\left\langle\zeta,z\right\rangle\neq -1,\, \forall \zeta\in K\}$ is the dual complement of $K$. This result has been generalized in the setting of the complex projective space $\C\mathbb P^d$, showing that the Fantappi{\`e}  transform(in its projectivized version) is an isomorphism between the topological dual space of holomorphic line bundles over a $(\C-)$ convex open or compact set $K\subset\C\mathbb P^d$ onto the space of holomorphic line bundles on the dual complement $K^*$. A comprehensive discussion on this topic can be found in \cite{Andersson Passare book}. Also, see \cite{Chatterjee} for further generalization to strongly convex domains.
 
 However, if $K\subsetneq \C^d$ is a compact set, then viewing it as a compact set in $\C\mathbb P^d$ sitting in an affinization, the section of a holomorphic line bundle on $K$ can be viewed as a $k$-homogeneous holomorphic function on $K\subsetneq \C^d$. Moreover, at the level of the space of analytic functionals, the topological dual space of the space of holomorphic line bundles is identified as $\mathcal O'(K)$. Furthermore, the projective dual complement in the sense of \cite[Definition~2.1.1]{Andersson Passare book} is  given by 
 \bea\label{eq:projective dual}
 K^*=\{[z_0:z]\in \C\mathbb P^d: z_0+\langle z,w\rangle\neq 1, \forall w\in K\}
 \eea
can be viewed as an open set in $\C^{d+1}$, and the Fantappi{\`e} transform (projectivized version) constitutes an isomorphism between $\mathcal O'(K)$ and the space of holomorphic functions that are $k$-homogeneous; see \cite[Theorem~3.5.3]{Andersson Passare book}. We elaborate more on this in Subsection~\ref{sub:analytic dual} for the particular situation when the compact set
$K=\operatorname{conv} W(A)$, the convex hull of $W(A)$ for a $d$-tuple real symmetric matrices $A$.
Consequently, the rational Herglotz–Nevanlinna approximants of the Stieltjes-Fantappi{\`e} transforms emerge as the Fantappi\`e transforms of analytic functionals acting on $\mathcal{O}(\operatorname{conv} W(A))$.

The present article makes use of Laplace transforms of Wigner distributions and Fantappi\`e transforms of specific analytic functionals, both canonically derived from Hankel kernel data encoded in a completely monotone function.

	\section{$\operatorname{CM}(\Gamma)$ and $\operatorname{CM}_{H}(\Gamma \times \mathbb{R}_+)$ as convex cones} \label{sec:convexity}
	In this section, we record some properties of the two convex cones of our concern: the Laplace transforms and the Stieltjes-Fantappi\`e transforms. Treating them as subsets of $\mathcal{O}(T_\Gamma)$ and $\mathcal{O}(T_{\Gamma \times \mathbb{R}_+})$ respectively, we mainly focus on their closure under the {\em Fr\'echet topology of uniform convergence on compact subsets of the interior on the respective tube domains}
    and their invariance under the automorphism group of $\Gamma$.
    
	\subsection{Laplace transforms}
	We begin with the observation that the cone $\widetilde {\operatorname{CM}}(\Gamma)$ is not closed under the Fr\'echet topology. Indeed, the Laplace transforms of the finite positive measures $\mu_n$, defined to be the Lebesgue measure restricted to the closed interval $[0, n]$ for each $n$, do not converge in the same cone under the Fr\'echet topology.
    
However, the enlarged cone of the Laplace transforms of measures with moderate growth on $\Gamma^*$, i.e., ${\operatorname{CM}(\Gamma)}$ is closed under the Fr\'echet topology as shown in the next proposition. The comments after Theorem \ref{Thm:BBG} in Preliminaries can be summarized as: {\em $F \in {\operatorname{CM}(\Gamma)}$ if and only if $F \in C^\infty(\operatorname{int} \Gamma)$ and the conditions \eqref{Eq:CM-cond} are satisfied for all $p \in \operatorname{int} \Gamma$, $\xi^{(1)}, \dots, \xi^{(k)} \in \Gamma$} and $k\in \mathbb{N}$.
    
    \begin{proposition}
		The convex cone ${\operatorname{CM}(\Gamma)}$ is closed in $\mathcal{O}(T_\Gamma)$ under the Fr\'echet topology.
	\end{proposition}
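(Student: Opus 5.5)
The plan is to use the characterization recalled just before the statement: $F\in\operatorname{CM}(\Gamma)$ if and only if $F\in C^\infty(\operatorname{int}\Gamma)$ and
\[
(-1)^k D_{\xi^{(1)}}\cdots D_{\xi^{(k)}}F(p)\ge 0
\]
for all $p\in\operatorname{int}\Gamma$, all $\xi^{(j)}\in\Gamma$ and all $k$. Here we identify $F$ with its restriction to the real cone. This reduces closedness to a statement about derivatives, and it avoids passing to limits of the representing measures directly.

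Take a sequence, or a net, $F_n\in\operatorname{CM}(\Gamma)\subset\mathcal O(T_\Gamma)$ converging to some $F\in\mathcal O(T_\Gamma)$ uniformly on compact subsets of $T_\Gamma$. By the Cauchy estimates on small polydisks contained in $T_\Gamma$, every complex partial derivative $\partial_z^\alpha F_n$ converges to $\partial_z^\alpha F$ locally uniformly on $T_\Gamma$. In particular the derivatives converge pointwise on $\operatorname{int}\Gamma\subset T_\Gamma$.

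On the real slice $\operatorname{int}\Gamma$, real directional derivatives coincide with the corresponding complex ones. Indeed, $D_\xi$ acting on the restriction is $\sum_j \xi_j\partial/\partial z_j$ evaluated at real points, because the functions are holomorphic. So each $(-1)^k D_{\xi^{(1)}}\cdots D_{\xi^{(k)}}F(p)$ is a finite linear combination of $\partial_z^\alpha F(p)$ with coefficients depending only on the $\xi^{(j)}$. It is therefore the limit of the nonnegative numbers $(-1)^k D_{\xi^{(1)}}\cdots D_{\xi^{(k)}}F_n(p)$, and hence is nonnegative.

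Two further points need checking. First, $F|_{\operatorname{int}\Gamma}$ is real-valued, since it is a pointwise limit of real-valued functions. Second, it is $C^\infty$, being real analytic. The characterization then gives $F|_{\operatorname{int}\Gamma}\in\operatorname{CM}(\Gamma)$. By \cite[Corollary 2.2]{HS} this means $F$ is the Laplace transform of a measure $\mu$ of moderate growth.

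The holomorphic extension of that Laplace transform agrees with $F$ on $\operatorname{int}\Gamma$, which is a totally real set of maximal dimension in $T_\Gamma$. The identity theorem then shows that they agree on all of $T_\Gamma$, so $F$ lies in the cone viewed inside $\mathcal O(T_\Gamma)$.

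The only delicate step is the one hidden in \cite[Corollary 2.2]{HS}: the inequalities alone must yield a representing measure of moderate growth, without any continuity at the vertex. This is exactly the reason the closedness statement holds for $\operatorname{CM}(\Gamma)$ but fails for $\widetilde{\operatorname{CM}}(\Gamma)$. Since that corollary is available from earlier in the paper, it is taken as given here.

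As an alternative, if one wants to avoid it, one could apply Theorem~\ref{Thm:BBG} to the translates $F(\cdot+\lambda)$ with $\lambda\in\operatorname{int}\Gamma$. These are continuous on $\Gamma$ and completely monotone, so they give finite measures $\mu_\lambda$. One then checks that $e^{\lambda\cdot x}\mu_\lambda(dx)$ is independent of $\lambda$, using uniqueness of the Laplace transform.
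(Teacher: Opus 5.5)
Your proof is correct and follows the paper's argument: locally uniform convergence on $T_\Gamma$ gives convergence of all derivatives at points of $\operatorname{int}\Gamma$, so the sign conditions \eqref{Eq:CM-cond} pass to the limit, and membership in $\operatorname{CM}(\Gamma)$ then follows from the characterization recalled just before the statement. You also make explicit several points the paper leaves implicit: that the limit is real-valued on $\operatorname{int}\Gamma$, the appeal to \cite[Corollary 2.2]{HS}, and the identity-theorem step that identifies $F$ with the complexified Laplace transform on all of $T_\Gamma$.
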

	\begin{proof}
		Suppose $\{F_n\}_n$ is a sequence of complexified Laplace transforms of measures with moderate growth on $\Gamma^*$ and $F_n$ converges to $F$ uniformly on compact subsets of $T_\Gamma$. Then $F$ is holomorphic in $T_\Gamma$ and hence, a smooth function when restricted to $\operatorname{int} \Gamma$.
		Moreover, the derivatives of $F_n$ converge to $F$ at every interior point of the tube domain, in particular the interior points of $\Gamma$. Thus, for any $p\in \operatorname{int} \Gamma$ and $\xi^{(1)}, \dots, \xi^{(k)} \in \Gamma$, we have
		$$(-1)^k D_{\xi^{(1)}} D_{\xi^{(2)}} \dots D_{\xi^{(k)}} F_n(p) = \lim_{n\to \infty} (-1)^k D_{\xi^{(1)}} D_{\xi^{(2)}} \dots D_{\xi^{(k)}} F(p) \geq 0,$$ as each $F_n$ satisfies the inequalities \eqref{Eq:CM-cond}. Hence, $F$ belongs to ${\operatorname{CM}(\Gamma)}$.
        \end{proof}
        
  Since the Laplace transform is injective on the space of positive measures of moderate growth, the next simple observation follows.
  \begin{corollary} The extremal rays of the closed cone ${\operatorname{CM}(\Gamma)}$ are the functions $c e^{-z \cdot x}$ with $c \geq 0$ and $x \in \Gamma^\ast$.
   \end{corollary}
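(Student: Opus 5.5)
The plan is to pass the question through the Henkin--Shananin representation and reduce it to the standard fact about extremal rays of the cone of positive measures. By the Henkin--Shananin corollary quoted after Theorem~\ref{Thm:BBG}, every $F\in\operatorname{CM}(\Gamma)$ has the form $F(z)=\int_{\Gamma^*}e^{-z\cdot x}\,d\mu(x)$ for a positive measure $\mu$ of moderate growth on $\Gamma^*$. The Laplace transform $\mu\mapsto F$ is additive and positively homogeneous. It is also injective, since a Laplace transform vanishing on $\operatorname{int}\Gamma$ forces the measure to vanish; one sees this by restricting to the imaginary directions and using uniqueness of the Fourier transform of the finite measure $e^{-p\cdot x}\mu$. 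Hence the transform is an isomorphism of convex cones between measures of moderate growth and $\operatorname{CM}(\Gamma)$. Extremal rays therefore correspond exactly to extremal rays of the cone of positive moderate-growth measures on $\Gamma^*$.

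First, I show that $ce^{-z\cdot x_0}$ spans an extremal ray. Suppose $e^{-z\cdot x_0}=F_1+F_2$ with $F_1,F_2\in\operatorname{CM}(\Gamma)$, represented by $\mu_1,\mu_2$. By injectivity, $\delta_{x_0}=\mu_1+\mu_2$. Then $\mu_i\le\delta_{x_0}$, so each $\mu_i$ is supported at $\{x_0\}$ and equals $c_i\delta_{x_0}$. Hence $F_i=c_ie^{-z\cdot x_0}$, which is proportional to the original function.

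Conversely, suppose $F$ is extremal with representing measure $\mu\neq0$. If $\operatorname{supp}\mu$ contained two distinct points, I would pick a Borel set $E\subset\Gamma^*$ with $\mu(E)>0$ and $\mu(\Gamma^*\setminus E)>0$, for instance a half-space separating the two points. Then $\mu=\mu|_E+\mu|_{\Gamma^*\setminus E}$ splits $\mu$ into two moderate-growth pieces, since each is dominated by $\mu$. Their transforms are non-proportional by injectivity, because the two measures are mutually singular and both nonzero. This contradicts extremality. So $\mu=c\,\delta_{x_0}$ and $F=ce^{-z\cdot x_0}$ with $x_0\in\Gamma^*$.

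The only step requiring care is injectivity of the Laplace transform on moderate-growth measures. I would handle it by fixing $p\in\operatorname{int}\Gamma$ and noting that $q\mapsto F(p+iq)$ is the Fourier transform of the finite measure $e^{-p\cdot x}\mu$. Equality of the transforms on $\operatorname{int}\Gamma$ propagates to the whole tube $T_\Gamma$ by the identity theorem, and Fourier uniqueness then gives equality of the measures.
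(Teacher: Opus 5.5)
Your proposal is correct and follows the paper's route: the paper deduces the corollary in one line from injectivity of the Laplace transform on positive measures of moderate growth, combined with the Henkin--Shananin representation. Your argument fills in the details the paper leaves implicit: injectivity via Fourier uniqueness on the tube, and the identification of the extremal rays of the cone of positive measures with point masses.
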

    
	\subsection{Stieltjes-Fantappi\`e transforms}
	Let us now consider the convex cone $\operatorname{CM}_H(\Gamma \times \mathbb{R}_+)$
     of Stieltjes-Fantappi\`e transforms of measures in $\mathcal{M}^+(\Gamma^*)$.
	\begin{proposition}
		The convex cone $\operatorname{CM}_H(\Gamma \times \mathbb{R}_+)$ is a closed in $\mathcal{O}(T_{\Gamma \times \mathbb{R}_+})$ under the Fr\'echet topology.
	\end{proposition}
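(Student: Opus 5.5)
Take a sequence $\Phi_n\in \operatorname{CM}_H(\Gamma\times\mathbb{R}_+)$, each the Stieltjes-Fantappi\`e transform \eqref{Eq:complex-FT} of some $\mu_n\in\mathcal M^+(\Gamma^*)$, and suppose $\Phi_n\to\Phi$ uniformly on compact subsets of $T_{\Gamma\times\mathbb{R}_+}$. The plan is to check the three defining properties of $\operatorname{CM}_H(\Gamma\times\mathbb{R}_+)$ for the limit and then apply Theorem~\ref{Thm:SF}.

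By Weierstrass' theorem, $\Phi$ is holomorphic in $T_{\Gamma\times\mathbb{R}_+}$. All its derivatives are locally uniform limits of those of $\Phi_n$, so its restriction to $\operatorname{int}\Gamma\times\mathbb{R}_+$ is $C^\infty$. Passing to the limit pointwise, as in the proof of the preceding proposition for $\operatorname{CM}(\Gamma)$, gives the inequalities \eqref{Eq:CM-cond} for all directions in $\Gamma\times\mathbb{R}_+$. Homogeneity of degree $-1$ also survives pointwise limits: $\Phi_n(tp,tp_0)=t^{-1}\Phi_n(p,p_0)$ for $t>0$ passes to $\Phi$.

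The main obstacle is continuity up to $\Gamma\times\mathbb{R}_+$, i.e.\ at boundary points $p\in\partial\Gamma$ with $p_0>0$. Uniform convergence on compacts of the open tube says nothing there directly, so I would argue through the measures instead. Fix $p^*\in\operatorname{int}\Gamma$. The bounded limit $\Phi_n(0,1)=\mu_n(\Gamma^*)$ would give total-mass bounds, but $(0,1)$ is not an interior point, so I would use $\Phi_n(p^*,1)=\int d\mu_n/(1+p^*\cdot x)\to\Phi(p^*,1)$ instead. Since $p^*\cdot x\ge c|x|$ on $\Gamma^*$ by acuteness, the measures $\nu_n=(1+p^*\cdot x)^{-1}\mu_n$ have bounded mass. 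Passing to a subsequence, they converge vaguely on the compactification $\Gamma^*\cup\{\infty\}$ to some $\nu$, possibly with an atom at infinity. The integrand $(1+p^*\cdot x)/(p_0+p\cdot x)$ extends continuously to $\infty$ with value $\lim (p^*\cdot x)/(p\cdot x)$ along rays, which is direction dependent. So I would instead compactify radially, via the sphere of directions, and identify
\[
\Phi(p,p_0)=\int_{\Gamma^*}\frac{d\mu(x)}{p_0+p\cdot x}+\int_{S}\frac{(p^*\cdot\theta)\,d\sigma(\theta)}{p\cdot\theta}
\]
for interior $(p,p_0)$, with $\mu=(1+p^*\cdot x)\nu|_{\Gamma^*}$ and $\sigma$ the directional mass escaping to infinity.

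The second term is homogeneous of degree $0$ in $(p,p_0)$. Together with the degree $-1$ homogeneity of $\Phi$ already established, this forces $\sigma=0$ after comparing $\Phi(tp,tp_0)$ as $t\to\infty$. Hence $\Phi$ is the Stieltjes-Fantappi\`e transform of a positive measure $\mu$ on $\Gamma^*$. Finiteness of $\mu$ and continuity up to $\partial\Gamma\times\mathbb{R}_+$ then follow from the integral representation: as $p\to p'\in\partial\Gamma$ we have $p_0+p\cdot x\ge p_0>0$, so dominated convergence applies provided $\mu$ is finite. The remaining difficulty is a uniform bound on $\mu_n(\Gamma^*)=\Phi_n(0,1)$, which must be extracted from interior data. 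I would obtain it from the convergence of $t\Phi_n(tp^*,t\cdot 1)$-type expressions, or else prove finiteness of $\mu$ via Fatou applied to $p\to 0$. If this fails, I would interpret the statement within the moderate-growth class, in analogy with $\operatorname{CM}(\Gamma)$.
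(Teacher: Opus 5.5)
The step that fails is your claim that the escaping term is homogeneous of degree $0$. The function $\int_S \frac{(p^*\cdot\theta)\,d\sigma(\theta)}{p\cdot\theta}$ does not depend on $p_0$ and is homogeneous of degree $-1$ in $p$. It is therefore homogeneous of degree $-1$ in $(p,p_0)$, exactly like $\Phi$. Comparing $\Phi(tp,tp_0)$ as $t\to\infty$ gives no information, and $\sigma=0$ does not follow.

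No other argument can force $\sigma=0$, because escaping mass really survives in the limit and the statement as written is false. Take $e\in\Gamma^*\setminus\{0\}$ and $\mu_n=n\,\delta_{ne}\in\mathcal M^+(\Gamma^*)$. Then
\[
\Phi_n(z,z_0)=\frac{n}{z_0+n\,z\cdot e},\qquad \Phi_n(z,z_0)-\frac{1}{z\cdot e}=\frac{-z_0}{(z\cdot e)(z_0+n\,z\cdot e)} .
\]
On a compact subset of $T_{\Gamma\times\mathbb{R}_+}$ one has $\operatorname{Re}(z)\cdot e\geq c>0$, so the difference is $O(1/n)$ there. The limit $1/(z\cdot e)$ is completely monotone and homogeneous of degree $-1$. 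However, $1/(p\cdot e)\to\infty$ as $p\to 0$, so it is not continuous at $(0,p_0)\in\Gamma\times\mathbb{R}_+$. It is not even the Stieltjes-Fantappi\`e transform of any nonnegative measure, since such transforms are strictly decreasing in $p_0$ unless $\mu=0$.

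The same example defeats your fallbacks:
\begin{itemize}
\item The uniform bound on $\mu_n(\Gamma^*)=\Phi_n(0,1)$ that you hope to extract from interior data does not exist, since here $\mu_n(\Gamma^*)=n$.
\item The moderate-growth reinterpretation is not closed either, because $1/(z\cdot e)$ is not a transform of any nonnegative measure.
\item Fatou cannot yield finiteness. Truncations to $[0,n]$ of $(1+x)^{-1/2}dx$ on $\Gamma=[0,\infty)$ converge locally uniformly to the transform of an infinite measure, and that limit also blows up at the vertex.
\end{itemize}

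Your first step is the paper's entire proof: it passes \eqref{Eq:CM-cond} and homogeneity to the limit and invokes Theorem~\ref{Thm:SF}. It never addresses continuity on $\Gamma\times\mathbb{R}_+$, which you correctly singled out as the crux, and that is exactly where it breaks.

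Once the homogeneity claim is dropped, your compactness argument does prove something correct: every locally uniform limit has the form $\int_{\Gamma^*}\frac{d\mu(x)}{z_0+z\cdot x}+\int_S\frac{d\tau(\theta)}{z\cdot\theta}$. Here $\tau\geq 0$ is finite and $\mu\geq 0$ satisfies $\int(1+p^*\cdot x)^{-1}d\mu<\infty$, and both kinds of deviation from finite $\mu$ with $\tau=0$ actually occur. So $\operatorname{CM}_H(\Gamma\times\mathbb{R}_+)$, the subcone with $\mu$ finite and $\tau=0$, is not closed. This parallels the paper's own remark that $\widetilde{\operatorname{CM}}(\Gamma)$ is not closed.
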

	\begin{proof}
	The proof is similar to the proof of the previous proposition. Note that, the complete monotonicity in $\operatorname{int} \Gamma \times \mathbb{R}_+$ and degree of homogeneity of a sequence of functions is preserved under uniform convergence on compact subsets of $\operatorname{int} \Gamma \times \mathbb{R}_+$. Hence, the proposition is a consequence of Theorem~\ref{Thm:SF}.
	\end{proof}
	
	Similarly, the extremal rays on the convex cone of Stieltjes-Fantappi\`e transforms of positive measures corresponds to point masses.
	
	\subsection{Invariance under the automorphism group}
The automorphism group $G(\Gamma)$ of the cone $\Gamma$ consists of $d \times d$ real invertible matrices mapping $\Gamma$ to itself. It is known that the automorphism group $G(\Gamma^*)$ of the dual cone consists of the adjoints of the elements in $G(\Gamma)$. See \cite[Chapter I]{Faraut-Koranyi}. 

The self-dual case of symmetric convex cones, i.e., $\Gamma^\ast = \Gamma$, is notable. The associated tube domain is then a Riemannian symmetric space
which can be realized as a bounded symmetric domain via a generalized Cayley transform. These are true multivariate analogues of the upper-half plane and the unit disk.
The Laplace and Stieltjes-Fantappi\`e transforms of positive measures naturally enter into the ample and well developed representation theoretic picture. See \cite{Faraut-Koranyi} for historical comments and an extensive bibliography.

Regarding the two convex cones $\operatorname{CM}(\Gamma)$ and $\operatorname{CM}_H(\Gamma\times \R_+)$ as classes of multipliers—having a bounded modulus and a positive real part, respectively—on various Hilbert spaces of analytic functions associated to the corresponding tube domains, it is natural to ask whether they are invariant under the action of $G(\Gamma).$ The affirmative answer follows directly from the observation below.

Let $A$ be in $G(\Gamma)$ and let $F$ be in $\operatorname{CM}(\Gamma)$. Then $F$ is of the form \eqref{Eq:LT} for some measure $\mu$ on $\Gamma^*$ having moderate growth. Then, for $p\in \operatorname{int} \Gamma$,
\begin{align*}
    F\circ A (p) = \int_{\Gamma^*} \exp{(- p \cdot A^*x)} d\mu(x) = \int_{\Gamma^*} \exp{(- p \cdot x)} d\nu(x),
\end{align*}
where $\nu$ is the push-forward of $\mu$ under $A^* \in G(\Gamma^*)$. Hence, $F\circ A \in \operatorname{CM}(\Gamma)$.

Similarly, if $\Phi \in \operatorname{CM}_H(\Gamma\times \R_+)$, then $\Phi$ can be expressed in the form \eqref{Eq:FT} for some finite measure $\mu$ on $\Gamma^*$ and hence, for $p_0>0$ and $p\in \operatorname{int} \Gamma$,
\begin{align*}
    \Phi (Ap, p_0) = \int_{\Gamma^*} \frac{d\mu(x)}{p_0 + p \cdot A^*x} = \int_{\Gamma^*} \frac{d\nu(x)}{p_0 + p \cdot x},
\end{align*}
where the measure $\nu$ is as above. So, for the automorphism, $(A, \operatorname{Id})(p, p_0) = (Ap, p_0)$ of $\Gamma \times (0, \infty)$ we obtained
$\Phi \circ (A, \operatorname{Id}) \in \operatorname{CM}_H(\Gamma\times \R_+)$.

However, the holomorphic automorphism group of a tube domain over a symmetric convex cone is bigger than $G(\Gamma)$, see Chapter X in \cite{Faraut-Koranyi}. We show by means of a simple example that the above  elementary ``real" invariance fails.

\begin{remark}
    The algebra of bounded holomorphic functions $H^\infty(T_\Gamma)$ on $T_\Gamma$ is invariant under the holomorphic automorphisms of $T_\Gamma$. But the collection of complexified Laplace transforms, a proper subclass of $H^\infty(T_\Gamma)$, is not invariant under the holomorphic automorphisms of $T_\Gamma$ as illustrated in the following example with $\Gamma = \mathbb{R}^2_+$. The associated tube domain $T_\Gamma = \mathbb{R}^2_+ + i \mathbb{R}^2 \subseteq \mathbb{C}^2$ can be identified with $\mathbb{H}_R^+ \times \mathbb{H}_R^+$. Consider the automorphism $\Psi$ of $T_\Gamma$ given by
	$$
	\Psi(z_1, z_2) = \left(\frac{1-i + (1+i)z_1}{1+i + (1-i)z_1}, \ z_2 \right).
	$$
	The function
	$$
	F(z_1, z_2) = \exp{-(z_1 + z_2)}, \ \text{ for } (z_1, z_2) \in T_\Gamma,
	$$
	is the complexified Laplace transform of the Dirac mass supported at the point $(1, 1) \in \Gamma^*$.
	Now,
	$$
	F\circ \Psi (z_1, z_2) = \exp{ \left(-\frac{1-i + (1+i)z_1}{1+i + (1-i)z_1} - z_2 \right)} \ \text{ for }(z_1, z_2) \in T_\Gamma.
	$$
	In particular, $(1/2, 1/2) \in \operatorname{int}\Gamma$ and
	$$
	F \circ \Phi(1/2, 1/2) = \exp(-13/10 +  3i/5 )
	$$
	which is not a positive real number. Thus, $F \circ \Phi$ can never arise from the Laplace transform of a positive measure on $\Gamma^*$.
\end{remark}

	\section{Inversion of the Borel and the Stieltjes-Fantappi\`e transform}\label{sec:inversion}
   In \cite[Proposition~4.1]{HS}, Henkin and Shananin provided an inversion formula for the Laplace transform. In a similar spirit, this section derives explicit inversion formulae for both the Borel and Stieltjes-Fantappi\`e transforms. For the Borel transform, we establish an inversion formula over any acute closed convex cone by using the classical inverse Laplace transform. For the Stieltjes-Fantappi\`e transform, we restrict our focus to the positive orthant and obtain its inverse exploiting its interplay with the Radon transform. Notably, this latter inversion can also be interpreted as a jump formula in the context of the Stieltjes-Fantappi\`e transform. 
    \subsection{Borel inversion formula}
    
    We utilize the classical inversion techniques for the Laplace transform, which require the complexification of the elements in ${\operatorname{CM}_H(\Gamma\times \R_+)}$ as illustrated in \eqref{Eq:complex-FT}. 
	
		Let $\Phi \in \operatorname{CM}_H(\Gamma\times \R_+)$. For a fixed $p\in \operatorname{int} \Gamma$, consider the function, $\Phi_p(t):=\Phi(p,t),\,\,\text{for } t\in\R_+$. By the extension given in \eqref{Eq:complex-FT}, $\Phi_p$ has unique holomorphic extension to the right half plane $\mathbb H^+_R$, we denote this extension $\widetilde{\Phi_p}$.
        
 The following theorem provides the explicit inverse of the Borel transform using the extended function $\widetilde{\Phi_p}$. 
	\begin{theorem}\label{th:inv borel}
		The inverse of the Borel transform $\psi:\operatorname{CM}_H(\Gamma\times \R_+)\rightarrow \widetilde{\operatorname{CM}}(\Gamma)$, is given by
		\[
		\psi(\Phi)(p)= \lim\limits_{T\rightarrow\infty}\frac{1}{2\pi i}\int_{\gamma-iT}^{\gamma+iT} \widetilde \Phi_p(z) e^z dz,\,\, \forall p\in \operatorname{int}\Gamma, 
		\]
		where $\gamma$ is any positive real number, and $\Phi \in \operatorname{CM}_H(\Gamma\times \R_+)$ is the Stieltjes-Fantappi\`e transform of $\mu$ with
		\[
		\widetilde{\Phi}_p(z)= \int_{\Gamma^*} \frac{d\mu(x)}{z+p\cdot x},\,\,\text{for } z\in \mathbb H^+_R.
        \]
	\end{theorem}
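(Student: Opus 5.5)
The plan is to reduce the statement to the classical complex inversion (Bromwich) formula for the one-variable Laplace transform, applied along each ray through $p$. Fix $p\in\operatorname{int}\Gamma$ and let $F\in\widetilde{\operatorname{CM}}(\Gamma)$ be the Laplace transform of the same measure $\mu\in\mathcal M^+(\Gamma^*)$. By the commutative diagram in the Preliminaries (with the isomorphism of Henkin--Shananin and Theorem~\ref{Thm:SF}), $\Phi=\mathcal B(F)$. It therefore suffices to show that the right-hand side of the formula equals $F(p)$, because $\psi=\mathcal B^{-1}$ then sends $\Phi$ to $F$.

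The first step is to write $\widetilde\Phi_p$ as the Laplace transform of the one-variable function $h(t)=F(tp)$. For $z\in\mathbb H^+_R$, Fubini applies, since $|e^{-tz}e^{-tp\cdot x}|\le e^{-t\operatorname{Re} z}$ and $\mu$ is finite, so
$$\int_0^\infty F(tp)e^{-tz}\,dt=\int_{\Gamma^*}\int_0^\infty e^{-t(z+p\cdot x)}\,dt\,d\mu(x)=\int_{\Gamma^*}\frac{d\mu(x)}{z+p\cdot x}=\widetilde\Phi_p(z).$$
Thus $\widetilde\Phi_p=\mathcal L h$ on the whole half plane. The function $h$ is bounded by $\mu(\Gamma^*)$, continuous on $[0,\infty)$ (because $F\in C(\Gamma)$), and in fact smooth for $t>0$.

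The second step invokes the Bromwich formula: for $h$ continuous, bounded and locally of bounded variation near $t_0>0$, one has $\lim_{T\to\infty}\frac1{2\pi i}\int_{\gamma-iT}^{\gamma+iT}\mathcal Lh(z)e^{zt_0}\,dz=h(t_0)$ for every $\gamma>0$. Taking $t_0=1$ gives $F(p)$, which matches the kernel $e^z$ in the statement. If I prefer not to cite this, I would verify it directly with Fubini on the segment. Substituting the integral representation and interchanging the finite segment integral with $d\mu$ is legitimate, because the integrand is bounded by $e^{\gamma}/\gamma$. The problem then reduces to the scalar limit
$$\lim_{T\to\infty}\frac1{2\pi i}\int_{\gamma-iT}^{\gamma+iT}\frac{e^z}{z+a}\,dz=e^{-a},\qquad a=p\cdot x\ge0.$$
This follows from the residue theorem: close the contour to the left with a large rectangle, observe that the only pole $-a$ lies inside with residue $e^{-a}$, and note that the horizontal sides contribute $O(1/T)$ while the left side contributes $O(e^{-R})$.

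The main obstacle is passing the limit $T\to\infty$ under $d\mu$. That requires a bound on $\bigl|\frac1{2\pi i}\int_{\gamma-iT}^{\gamma+iT}\frac{e^z}{z+a}dz\bigr|$ that is uniform in $T\ge1$ and $a\ge0$, after which dominated convergence with the finite measure $\mu$ finishes the argument. I would obtain this bound from the same rectangle: the difference from $e^{-a}\le1$ equals the integrals over the three remaining sides. On the horizontal sides $|z+a|\ge T$, and integrating $e^{\operatorname{Re} z}$ from $-\infty$ to $\gamma$ gives a bound of at most $e^\gamma/T$. The left side tends to zero. This yields a bound of the form $1+e^{\gamma}/(\pi T)$, independent of $a$. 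The uniformity in $a$ is delicate only for small $a$, near the pole, and the condition $\gamma>0$ keeps the pole strictly to the left of the line of integration, which handles it.
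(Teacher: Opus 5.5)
Your proposal is correct and takes essentially the same route as the paper: identify $\widetilde\Phi_p$ with the one-sided Laplace transform of $t\mapsto F(tp)$, then apply the classical complex inversion formula (the paper cites Widder) at $t=1$. Your optional residue-plus-dominated-convergence argument is a valid self-contained substitute for that citation, and your Fubini step makes explicit the identification of the holomorphic extension on $\mathbb H^+_R$ that the paper uses without comment.
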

	\begin{proof}
		Let $p\in \Gamma$, and $F\in\widetilde{\operatorname{CM}}(\Gamma)$. Then,
		\beas
		(\psi\circ\mathcal B)(F)(p)&=&\lim\limits_{T\rightarrow\infty}\frac{1}{2\pi i}\int_{\gamma-iT}^{\gamma+iT} \widetilde{\mathcal{B}F_p}(z) e^z dz\\ 
		&=& \lim\limits_{T\rightarrow\infty}\frac{1}{2\pi i} \int_{\gamma-iT}^{\gamma+iT} \ltt\int_0^\infty F(tp) e^{-tz} dt\rtt e^z dz\\
		&=& \lim\limits_{T\rightarrow\infty}\frac{1}{2\pi i} \int_{\gamma-iT}^{\gamma+iT} \mathcal L(f_p)(z) e^z dz,
		\eeas 
		where $f_p(t)=F(tp),\,\text{for } t\in \R_+$, and $\mathcal L$ is the one-sided Laplace transform of $f_p$ given by
		\[
		\mathcal L(f_p)(z)=\int_0^\infty f_p(t) e^{-tz} dt,\,\, \text{for }z\in \mathbb H^+_R.
		\]
		Following \cite[Chapter~VI, Theorem~5a]{Widder}, and applying the inversion formula for the Laplace transform, we get that 
		\[
		f_p(t)=\frac{1}{2\pi i} \lim\limits_{T\rightarrow\infty}\int_{\gamma-iT}^{\gamma+iT} \mathcal L(f_p)(z) e^{tz} dz, \,\, \text{for }t\in \R_+,
		\]
		where $\gamma>0$. 
		This implies,
		\[
		(\psi\circ\mathcal B)(F)(p)=f_p(1)=F(p).
		\]
		This completes the proof.
	\end{proof}

	\subsection{Stieltjes-Fantappi\`e inversion formula for $\mathcal{C}^\infty_c(\R^d_+)$ }
    Here, we derive an inversion formula for the Stieltjes-Fantappi{\`e} transform on the positive orthant $\R^d_+$ via the Radon transform inversion. Let $f\in \mathcal{C}^\infty_c(\R^d_+)$. The Stieltjes-Fanatppi{\`e} transform of $f$ is given by
	\[
	\Phi(p,p_0)=\int_{\R^d_+}\frac{f(t)}{p_0+p\cdot t} d\lambda_{leb}(t),
	\]
	where $\lambda_{leb}$ is the Lebesgue measure of $\R^d$, and $(p,p_0)\in \R^d_+\times \R_+.$
	Since $f$ is compactly supported, we can extend $\Phi(f)$ continuously on $\R^d\times(\C\setminus\{\operatorname{Im}(z)=0\})$ by taking
	\bea\label{eq:extn fan}
	\widetilde{\Phi}(p,z)=\int_{\R^d_+}\frac{f(t)}{z+p\cdot t} d\lambda_{leb}(t),\quad \text{for } (p,z)\in \R^d\times(\C\setminus\{\operatorname{Im}(z)=0\}).
	\eea
	Furthermore, by Morera's theorem, for any fixed $p\in\R^d$, $\widetilde{\Phi}(p,\cdot)\in\mathcal{O}(\C\setminus\{\operatorname{Im}(z)=0\}).$ 
	Next, we consider the jump function associated with $\widetilde{\Phi}$, given as,
	\bea\label{eq:jump fns}
	J_{\widetilde{\Phi}}(p, \xi)& \bydef &\frac{1}{\pi}\lim\limits_{\eta\rightarrow 0^+}\ltt{{\partial^{d-1}_{\xi}\widetilde{\Phi}}(p,\xi-i\eta)-{\partial^{d-1}_\xi\widetilde{\Phi}}(p,\xi+i\eta)}\rtt,
	\eea
	where $(p,\xi)\in \R^{d+1}$. Although currently it is unclear whether the above limit exists or not, later we will show that it does exist. Continuing with the above notations we have the following inversion result.
	\begin{theorem}\label{th:inversion fan}
		Let $f\in \mathcal C^\infty_c(\R^d_+)$. Then for $d$ even, 
		\bea\label{eq:inversion even}
		f(x)= \frac{-(-1)^{d/2}}{(2\pi)^d} {\int_{\mathbb S^{d-1}}{\int_\R}}\frac{ J_{\widetilde{\Phi}}(p,-\xi)}{p\cdot x-\xi} d\xi d\sigma_{d-1}(p),
		\eea
		and for $d$ odd,
		\bea\label{eq:inversion odd}
		f(x)=\frac{(-1)^{(d-1)/2}}{2(2\pi)^{d-1}} {\int_{\mathbb S^{d-1}}}  J_{\widetilde{\Phi}}(p,-p\cdot x) d\sigma_{d-1}(p),
		\eea
		where $\sigma_{d-1}$ is the surface area measure of $\mathbb S^{d-1}$.
	\end{theorem}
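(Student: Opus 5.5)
Write the Stieltjes--Fantappi\`e transform through the Radon transform. With $\mathcal R f(p,s)=\int_{\{t:\,p\cdot t=s\}} f\,d\lambda_{d-1}$ for $p\in\mathbb S^{d-1}$, the coarea formula along the level sets of $t\mapsto p\cdot t$ gives, for every nonreal $z$,
\[
\widetilde\Phi(p,z)=\int_{\R}\frac{\mathcal R f(p,s)}{z+s}\,ds .
\]
For fixed $p$ this is the Cauchy (Stieltjes) integral of the compactly supported smooth function $s\mapsto \mathcal R f(p,s)$. It is evaluated at $-z$, up to the factor $-1$. We restrict to $p\in\mathbb S^{d-1}$ because $\widetilde\Phi(\lambda p,\lambda z)=\lambda^{-1}\widetilde\Phi(p,z)$, so these values already determine everything.

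\textbf{Step 1: identify the jump.} Differentiate $d-1$ times in $z$ under the integral, or equivalently move the derivatives onto $\mathcal R f$ by integrating by parts in $s$. Then apply the Sokhotski--Plemelj formula. Since $\frac{1}{\xi-i\eta+s}-\frac{1}{\xi+i\eta+s}\to 2\pi i\,\delta(s+\xi)$, we get
\[
\frac1\pi\lim_{\eta\to0^+}\bigl(\partial_\xi^{d-1}\widetilde\Phi(p,\xi-i\eta)-\partial_\xi^{d-1}\widetilde\Phi(p,\xi+i\eta)\bigr)=2i\,(-1)^{d-1}\,\partial_s^{d-1}\mathcal Rf(p,s)\big|_{s=-\xi}.
\]
The sign must be tracked with the chain rule, since $\partial_\xi$ acting on a function of $s=-\xi$ produces $(-1)^{d-1}$. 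Because $\mathcal Rf(p,\cdot)\in C_c^\infty$, the limit exists uniformly in $p$. This settles the existence issue left open after \eqref{eq:jump fns}. It also gives $J_{\widetilde\Phi}(p,-\xi)=c_d\,\partial_s^{d-1}\mathcal Rf(p,\xi)$ with an explicit constant $c_d$ (a power of $-1$ times $2i$; we will reconcile the factor of $i$ with the real normalisation in the final formulas).

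\textbf{Step 2: insert the classical Radon inversion.} Use the standard formulas (Helgason, Natterer). For $d$ odd,
\[
f(x)=\frac{(-1)^{(d-1)/2}}{2(2\pi)^{d-1}}\int_{\mathbb S^{d-1}}\partial_s^{d-1}\mathcal Rf(p,s)\big|_{s=p\cdot x}\,d\sigma_{d-1}(p).
\]
For $d$ even, the Hilbert transform in $s$ appears:
\[
f(x)=\frac{(-1)^{d/2}}{(2\pi)^d}\int_{\mathbb S^{d-1}}\mathrm{p.v.}\!\int_\R\frac{\partial_s^{d-1}\mathcal Rf(p,s)}{p\cdot x-s}\,ds\,d\sigma_{d-1}(p),
\]
up to the sign convention used for the Hilbert kernel. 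Substituting Step 1 and using $s=\xi$, $\partial_s^{d-1}\mathcal R f(p,p\cdot x)=c_d^{-1}J_{\widetilde\Phi}(p,-p\cdot x)$ yields \eqref{eq:inversion odd} and \eqref{eq:inversion even}. The integral over $\R$ in \eqref{eq:inversion even} is read as a principal value.

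\textbf{Main obstacle.} The main obstacle is bookkeeping the constants and signs. Three conventions interact: the factor $2\pi i$ from Plemelj, the orientation $-\xi$ versus $\xi$, and the normalisation of the Radon inversion and Hilbert transform. The stated constants contain no $i$. I would therefore first check whether the real part of the jump is what is intended, noting that $\mathcal Rf$ is real and the jump is purely imaginary. If it is, I would absorb $i$ by pairing with the conjugate branch. To fix signs, I would test the final formula against the case $d=1$ (odd) or $d=2$ with a Gaussian-like bump.

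A secondary point is justifying differentiation under the integral and the interchange of the limit $\eta\to0$ with the $p$-integral. Both follow from $\mathcal Rf\in C_c^\infty(\mathbb S^{d-1}\times\R)$ and dominated convergence, using the uniform Plemelj estimate for Hölder densities.
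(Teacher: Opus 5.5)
Your strategy matches the paper's. You slice along the hyperplanes $p\cdot t=s$ (the paper's Lemma~\ref{le:COV}) to get $\widetilde\Phi(p,z)=\int_\R \mathcal Rf(p,s)\,(z+s)^{-1}\,ds$, compute the jump by a Plemelj/Poisson-kernel limit, and then insert Palamodov's classical Radon inversion. Your Step~1 is correct, and it is more careful than the paper. We have $\frac1\pi\bigl(\frac{1}{s-i\eta}-\frac{1}{s+i\eta}\bigr)=2i\,P(s+i\eta)$, and the $d-1$ derivatives in $\xi$ fall on $\mathcal Rf(p,-\xi)$, so
\[
J_{\widetilde\Phi}(p,-\xi)=2i(-1)^{d-1}\,\partial_s^{d-1}\mathcal Rf(p,s)\big|_{s=\xi}.
\]
The paper's proof in effect replaces this combination by $P$ itself and drops the sign $(-1)^{d-1}$. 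It therefore arrives at $J_{\widetilde\Phi}(p,-\xi)=\partial_\xi^{d-1}\mathcal Rf(p,\xi)$, which is off by that constant.

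The gap is your final step. Substituting your own Step~1 into the inversion formulas does not yield the displayed identities. It shows instead that their right-hand sides equal $2i(-1)^{d-1}f(x)$.

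The repair you sketch is not available. $J_{\widetilde\Phi}$ is fixed by \eqref{eq:jump fns}, and it is purely imaginary, so its real part vanishes identically. No conjugate branch enters its definition.

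The $d=1$ test you propose settles the question. With $\mathbb S^0=\{\pm1\}$ one computes directly $J_{\widetilde\Phi}(1,-x)=J_{\widetilde\Phi}(-1,x)=2if(x)$. The right-hand side of the odd formula is then $\tfrac12\bigl(2if(x)+2if(x)\bigr)=2if(x)$.

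So, with the normalization of the jump used in the statement, the identities are off by the constant $2i(-1)^{d-1}$, and no argument can close this step. What your argument does prove is the statement with $J_{\widetilde\Phi}$ replaced by $\frac{(-1)^{d-1}}{2i}J_{\widetilde\Phi}$. Equivalently, replace the prefactor $\frac1\pi$ in \eqref{eq:jump fns} by $\frac{(-1)^{d-1}}{2\pi i}$. You should state that corrected version rather than assert the printed constants.

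A smaller point concerns the even case. With the kernel $1/(p\cdot x-s)$ you wrote, the even-dimensional inversion constant is $-(-1)^{d/2}/(2\pi)^d$, as in the paper; for $d=2$ this is the familiar $+1/(4\pi^2)$. It is not $(-1)^{d/2}/(2\pi)^d$.
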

	Before moving on to the proof of this theorem, we introduce the {\em Radon transform}. 
    \begin{definition}\label{Def:Radon}
        The Radon transform of 
	an integrable function $g$ on $\R^d$ is defined as
	\beas
	\mathcal R(g)(p,s) \bydef \int_{H(p,s)} g(t) d\sigma_H(t),\, \text{ for a.e. } s\in\R,
	\eeas
	where $p\in \mathbb S^{d-1}$, $H(p,s)=\{t\in\R^d: p\cdot t=s\}$, and $\sigma_H$ is the surface area measure of $H(p,s)$.
    \end{definition}
    
     The following lemma shows that an integral on $\R^d$ can be viewed as an integral of the Radon transform on the real line.
	\begin{lemma}\label{le:COV}For any $g\in\mathcal C^\infty_c(\R^d)$,
		\beas\label{eq:cov cone}\nonumber
		\int_{\R^d} g(t) d\lambda_{leb}(t)= \int_\R \int_{H(p,s)} g(u) d\sigma_H(u) ds
		=\label{eq:cov cone} \int_\R \mathcal R (g)(p,s) ds.
		\eeas
	\end{lemma}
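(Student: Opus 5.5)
The identity is a change of variables adapted to the direction $p\in\mathbb S^{d-1}$, followed by Fubini. Here $p$ is a fixed but arbitrary unit vector, and both sides are independent of any further choices.

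\emph{Orthonormal coordinates.} Complete $p$ to an orthonormal basis $\{p, e_2,\dots,e_d\}$ of $\R^d$. Let $Q$ be the orthogonal matrix whose first column is $p$, and write $t = Q(s,y) = sp + \sum_{j\ge 2} y_j e_j$ with $s\in\R$ and $y\in\R^{d-1}$. Since $|\det Q|=1$, the change of variables formula gives
\[
\int_{\R^d} g(t)\, d\lambda_{leb}(t) = \int_{\R^d} g(Q(s,y))\, ds\, dy .
\]

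\emph{Fubini.} The integrand $(s,y)\mapsto g(Q(s,y))$ is continuous with compact support, hence in $L^1(\R^d)$. Fubini's theorem therefore lets us write
\[
\int_{\R^d} g(Q(s,y))\, ds\, dy = \int_\R \Big(\int_{\R^{d-1}} g\big(sp+\textstyle\sum_{j\ge 2} y_je_j\big)\,dy\Big)\,ds .
\]

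\emph{Identifying the inner integral.} For fixed $s$, the map $y\mapsto sp+\sum_{j\ge 2} y_je_j$ is an affine isometry of $\R^{d-1}$ onto the hyperplane $H(p,s)$. Under this isometry, $(d-1)$-dimensional Lebesgue measure is carried to the surface measure $\sigma_H$. The inner integral is therefore $\int_{H(p,s)} g\, d\sigma_H = \mathcal R(g)(p,s)$, which gives both equalities in the statement.

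For $g\in\mathcal C^\infty_c$ the function $s\mapsto\mathcal R(g)(p,s)$ is defined for every $s$, not merely almost every $s$, and it is compactly supported and continuous, so the outer integral makes sense. The only point requiring care, and it is minor, is the normalization of $\sigma_H$. If $\sigma_H$ is taken to be the $(d-1)$-dimensional Hausdorff measure, the identification with the pushforward of Lebesgue measure follows from the isometry invariance of Hausdorff measure. There is no substantive obstacle.
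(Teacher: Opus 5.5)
Your proposal is correct and follows essentially the same route as the paper: complete $p$ to an orthonormal basis, change variables with unit Jacobian, and identify the inner $(d-1)$-dimensional Lebesgue integral with the surface integral over $H(p,s)$. Beyond the paper's argument, you also justify the Fubini step and the normalization of $\sigma_H$ explicitly, both of which the paper leaves implicit.
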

	\begin{proof}
		For a fixed $p\in\mathbb S^{d-1}$, let $\{p,q_2,\cdots,q_{d}\}$ is an orthonormal basis of $\R^d$. Then, consider the change of variables 
        \[
       (s,u)= (s,u_2,u_3,\cdots,u_{d})=( p.t,  q_2.t, q_3.t,\cdots, q_d.t),       
        \]
       where $s\in\R$ and $u=(u_2,u_3,\cdots,u_{d})\in\R^{d-1}$.
       Hence $$t= sp+u_2q_2+\cdots+u_dq_d.$$
       Also, for a fixed $s\in\R$, $ p.t=s$. Consequently, the map 
       \[
       u=(u_2,u_3,\cdots,u_{d})\rightarrow sp+u_2q_2+\cdots+u_dq_d, 
       \]
       parametrizes the hyperplane $H(p,s)$. Furthermore, by direct computations of the Jacobian, it follows that
       \beas
       dt_1 dt_2\cdots dt_n = ds \ du
       = ds \ d\sigma_H(u).
       \eeas
       Therefore, for any $g\in\mathcal C^{\infty}_c(\R^d)$,
       \beas
       \int_{\R^d} g(t) d\lambda_{leb}(t)&=&\int_\R \int_{\R^{d-1}} g(sp+u_2q_2+\cdots+u_dq_d) \ du \ ds
       \\&=&\int_\R \int_{H(p,s)} g(u)  \ d\sigma_H(u) \ ds,
       \eeas
       which completes the proof.
	\end{proof}
	\begin{proof}[\textbf{Proof of Theorem~\ref{th:inversion fan}}]
		By straightforward computations on  \eqref{eq:jump fns}, we get that
		\bea
		\label{eq:diff jump}J_{\widetilde{\Phi}}(p,-\xi)&=&\frac{1}{\pi}\lim\limits_{\eta\rightarrow 0^+} \int_{\R^d_+} f(t) W_{\xi,p,\eta}(t) \ d\lambda_{leb}(t),
		\eea
		where 
		\beas\label{eq:jump kernel}
		W_{\xi,p,\eta}(t)=\frac{(-1)^{d-1}}{((-\xi+p\cdot t)-i\eta)^{d}}-\frac{(-1)^{d-1}}{((-\xi+p\cdot t)+i\eta)^{d}}.
		\eeas
	Furthermore,
	\beas
	W_{\xi,p,\eta}(t)&=& {\partial^{d-1}_\xi} \ltt\frac{1}{(-\xi+p\cdot t-i\eta)}-\frac{1}{(-\xi+p\cdot t+i\eta)}\rtt\\
	&=& {\partial^{d-1}_\xi} P(-\xi+p\cdot t+i\eta),
	\eeas
	where $P: \mathbb{H}_U^+\rightarrow \R$ is the Poisson kernel of the upper half plane given as
	$$P(s+i\eta)=\frac{1}{\pi} \frac{\eta}{\eta^2+s^2},\,\,(s+i\eta)\in \mathbb H^+_U.$$
	
	As $\eta\rightarrow 0^+$, $P(s+i\eta)\rightarrow\delta_0(s)$, in the sense of distribution, where $\delta_0(s)$ is the Dirac delta distribution, i.e., for any test function $\phi$ on $\R$,
	\[
	\lim_{\eta\rightarrow0^+} \int_{\R} \phi(s) P(s+i\eta) ds=\phi(0).
	\]
    See \cite[Theorem~2.1]{Stein-Weiss} for a proof. 
	Let $p\in \mathbb S^{d-1}$.
	By \eqref{eq:diff jump}, and Lemma~\ref{le:COV}, we get that 
	\beas
	J_{\widetilde{\Phi}}(p,-\xi)
	&=&\lim_{\eta\rightarrow 0^+}\partial^{d-1}_\xi\int_{\R^d_+} f(t) P(-\xi+p\cdot t+i\eta) d\lambda_{leb}(t)\\
	&=&\lim_{\eta\rightarrow 0^+}\partial^{d-1}_\xi\int_\R P(-\xi+s+i\eta) \mathcal R(f)(p,s) ds\\
	&=& \partial^{d-1}_\xi \lim_{\eta\rightarrow 0^+} \int_\R P(-\xi+s+i\eta) \mathcal R(f)(p,s) ds\\
	&=&\partial^{d-1}_\xi \mathcal R(f)(p,\xi). 
	\eeas
	For even $d$, by 
	\cite[Section ~2.3]{Palamadov-book},
	\[
	(2\pi)^d f(x)=-(-1)^{d/2} \int_{\mathbb S^{d-1}}\int_\R \frac{{\partial^{d-1}_\xi} \mathcal R(f)(p,\xi)}{p\cdot x-\xi} d\xi d\sigma_{d-1}(p),
	\]
	where $\sigma_{d-1}$ is the surface area measure of $\mathbb S^{d-1}$.
	Hence, by \eqref{eq:diff jump}, we get that
	\[
	f(x)= \frac{-(-1)^{d/2}}{(2\pi)^d}\int_{\mathbb S^{d-1}}{\int_\R}\frac{ J_{\widetilde{\Phi}}(p,-\xi)}{p\cdot x-\xi} d\xi d\sigma_{n-1}(p).
	\]
	For odd $d$,  by
	\cite[Corollary~2.6]{Palamadov-book},
	\[
	f(x)=\frac{(-1)^{(d-1)/2}}{2(2\pi)^{d-1}} \int_{\mathbb S^{d-1}}\partial^{d-1}_\xi \mathcal R(f)(p,p\cdot x) d\sigma_{d-1}(p).
	\]
	Hence, by \eqref{eq:diff jump}, we get that
	\[
	f(x)=\frac{(-1)^{(d-1)/2}}{2(2\pi)^{d-1}} \int_{\mathbb S^{d-1}} J_{\widetilde{\Phi}}(p,-p\cdot x) d\sigma_{d-1}(p).
	\]
\end{proof}

\subsection{Recovering the missing angles for the Radon transform}
Theorem~\ref{th:inversion fan} establishes that the inverse Stieltjes-Fantappi{\`e} transform generally demands data for $\widetilde{\Phi}$ on the whole tube $\mathbb S^{d-1}\times \mathbb{R}$. However, we can overcome this global requirement by utilizing the Palamodov-Denisjuk technique \cite{PD88}, which successfully recovers the Radon transform from the incomplete initial data on $\mathbb S^{d-1}_+\times \R_+$, where $\mathbb S^{d-1}_+=\mathbb S^{d-1}\cap\R^d_+$.

\begin{theorem}
    Let $f\in\mathcal C^\infty_c(\R^d_+)$. The full Radon transform $\mathcal R(f)(\omega,s)$ on the entire domain can be recovered from the incomplete initial data restricted to $\mathbb S^{d-1}_+ \times (0, \infty)$.
\end{theorem}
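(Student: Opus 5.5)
The plan is to exploit the analyticity of the moment data of the Radon transform. Since $f\in\mathcal C^\infty_c(\R^d_+)$, Lemma~\ref{le:COV} gives, for every $\omega\in\mathbb S^{d-1}$ and $k\in\N_0$,
\[
\int_\R s^k\,\mathcal R(f)(\omega,s)\,ds=\int_{\R^d}(\omega\cdot t)^k f(t)\,d\lambda_{leb}(t)=\sum_{|\alpha|=k}\frac{k!}{\alpha!}\,m_\alpha\,\omega^\alpha ,
\]
where $m_\alpha=\int t^\alpha f(t)\,dt$. Thus the $k$-th moment in $s$ is the restriction to the sphere of a homogeneous polynomial of degree $k$ in $\omega$. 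A homogeneous polynomial is determined by its values on any nonempty open subset of $\mathbb S^{d-1}$, and $\mathbb S^{d-1}_+$ is such a set. Hence all moments $m_\alpha$ of $f$ are recovered from the data on $\mathbb S^{d-1}_+$.

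Two points need care in the first step.

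\emph{Restriction to $s>0$.} For $\omega\in\mathbb S^{d-1}_+$ (strictly positive coordinates) and $t\in\operatorname{supp} f\subset\R^d_+$, we have $\omega\cdot t>0$. So $\mathcal R(f)(\omega,\cdot)$ is supported in $(0,\infty)$, and the data on $\mathbb S^{d-1}_+\times(0,\infty)$ is the full slice. The moment integrals above therefore use only the given data.

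\emph{Extension in $\omega$.} Since $\operatorname{supp} f\subset\{|t|\le R\}$, the function $\omega\mapsto\int \phi(s)\mathcal R(f)(\omega,s)\,ds=\int\phi(\omega\cdot t)f(t)\,dt$ extends to an entire function of $\omega\in\C^d$ whenever $\phi$ is entire, for instance $\phi(s)=e^{-i\sigma s}$. Equivalently, by the projection-slice theorem, $\widehat{\mathcal R(f)}(\omega,\sigma)=\hat f(\sigma\omega)$. The Fourier transform $\hat f$ is entire of exponential type by Paley--Wiener. Its values on the open cone $\{\sigma\omega:\ \omega\in\mathbb S^{d-1}_+,\ \sigma\in\R\}$, a nonempty open subset of $\R^d$, come from the given data, and they determine $\hat f$ on all of $\R^d$ by the identity theorem.

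Next, reconstruct $\mathcal R(f)(\omega,s)$ for arbitrary $\omega$ by inverting the one-dimensional Fourier transform in $\sigma$:
\[
\mathcal R(f)(\omega,s)=\frac{1}{2\pi}\int_\R \hat f(\sigma\omega)e^{i\sigma s}\,d\sigma .
\]
Alternatively, one can use the moments: for fixed $\omega$, $\mathcal R(f)(\omega,\cdot)$ is compactly supported and hence determinate by its moments $\sum_{|\alpha|=k}\frac{k!}{\alpha!}m_\alpha\omega^\alpha$. This matches the Palamodov--Denisjuk philosophy \cite{PD88}: the range conditions (Helgason--Ludwig moment conditions) force the unknown directions to be polynomial or analytic extensions of the known ones.

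The main obstacle is making the recovery constructive and stable rather than a bare uniqueness statement, since analytic continuation from an open cone is ill-posed. For a concrete recovery I would first try a Gegenbauer/spherical-harmonic expansion. Write the Helgason--Ludwig expansion
\[
\mathcal R(f)(\omega,s)=\sum_k\sum_{\ell\le k,\ \ell\equiv k\ (2)} c_{k\ell}(\omega)\,(1-s^2)^{(d-1)/2}C^{d/2}_k(s)
\]
after rescaling to $|s|\le1$. Here each $c_{k\ell}$ is a spherical harmonic of degree $\ell\le k$. Such a harmonic is a finite-dimensional object, so it can be solved for by least squares on $\mathbb S^{d-1}_+$. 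This yields an explicit series reconstruction whose convergence follows from the smoothness of $f$. Once $\mathcal R(f)$ is known on $\mathbb S^{d-1}\times\R$, Theorem~\ref{th:inversion fan}'s underlying inversion applies.
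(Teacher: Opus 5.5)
Your argument is correct, and it takes a different route from the paper's. The paper proceeds in three steps:
\begin{itemize}
\item it uses the symmetry $\mathcal R(f)(-\omega,-s)=\mathcal R(f)(\omega,s)$ and the vanishing of $\mathcal R(f)(\omega,-s)$ to fill in $\mathbb S^{d-1}_\pm\times\R$;
\item it rotates so that the Palamodov--Denisjuk double cone $K_\lambda$ lies in $\mathbb S^{d-1}_+\cup\mathbb S^{d-1}_-$, and invokes the explicit integral formula of \cite{PD88} (kernel $E_N$) to recover the band-limited transform $\mathcal R_{d,\Lambda}(f)$ at the missing angles;
\item it returns to $\mathcal R(f)$ through the Fourier slice theorem.
\end{itemize}

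You argue directly by analyticity. The Helgason--Ludwig moments are homogeneous polynomials in $\omega$, so they are determined by their values on the open set $\mathbb S^{d-1}_+$, and each compactly supported slice is then moment-determinate. Alternatively, $\hat f$ is entire and is fixed by its values on the open orthant. A small point: the set $\{\sigma\omega:\ \omega\in\mathbb S^{d-1}_+,\ \sigma\in\R\}$ is not open, because of the origin, but it contains the open orthant, which is all you use.

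Your route is shorter, self-contained, and works verbatim for any nonempty open set of known directions. It therefore needs no rotation or cone fitting. That step is in fact delicate in the paper: as written, $\tan\lambda=1/\sqrt{d-1}$ turns the rotated $K_\lambda$ into $\{|\xi_1+\cdots+\xi_d|\ge 1\}$, which leaves the closed orthants once $d\ge 3$. One needs $\tan\lambda>\sqrt{d-1}$.

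What the paper's route buys is an off-the-shelf reconstruction formula, frequency band by frequency band, with the ill-posedness made visible in the factor $e^{N\gamma}$ of $E_N$. Your moment or Gegenbauer--spherical-harmonic series is equally explicit, but its instability sits in extrapolating polynomials from $\mathbb S^{d-1}_+$ to the whole sphere. Both approaches are analytic continuation at heart, as you correctly flag.
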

\begin{proof}
    Note that, for any $(\omega,s)\in\mathbb S^{d-1}_+\times\R_+$, 
\[
\mathcal R(f)(-\omega,-s)=\mathcal R(f)(\omega,s),
\]
and as $f$ is compactly supported on $\R^d_+$,
\[
 \mathcal R(f)(\omega,-s)=  \mathcal R(f)(-\omega,s)=0.
\]
These symmetries demonstrate that knowing the data of $\mathcal R(f)$ on $\mathbb S^{d-1}_+\times \R_+$ immediately recovers the full lines of data on $\mathbb S^{d-1}_+\times \R$ and the opposite orthant $\mathbb S^{d-1}_-\times\R$, where  $\mathbb S^{d-1}_-=-\mathbb S^{d-1}_+ $.

When $d=1$, i,e., $\mathbb S^0=\{\pm 1\}$, using the above argument, from the initial data of $\{+1\}\times (0,\infty)$, we can immediately recover $\mathcal R(f)$ for the entire domain. 
Thus, for the rest of the proof, we consider $d\geq 2$.   
By the argument above, the problem has been reduced to recovering the data on the missing quadrants. Denoting $\mathcal F_d$ as the $d$-dimensional Fourier transform on $\R^d$ and $\mathcal F_1$ as the 1-dimensional Fourier transform with respect to the affine parameter, the Fourier Slice Theorem yields:
\[
\mathcal F_1\ltt \mathcal R(f)(\omega,\cdot)\rtt(t)=\mathcal F_d(f)(t \omega),\quad t\in \R.
\]
For $\Lambda>0$, the {\em filtered Fourier transform} of $f$ is defined as
\[
\mathcal F_{d,\Lambda}(f)(\xi)\bydef H(\Lambda-|\xi|)\mathcal F_d(f)(\xi), \quad \xi\in\R^d,
\]
where $H$ is the Heaviside step function. Consequently, the {\em filtered Radon transform} is defined via the inverse Fourier transform:
\[
\mathcal R_{d,\Lambda} (f)(\omega,s) \bydef \mathcal F_1^{-1}\ltt\mathcal F_{d,\Lambda}(f)(t\omega)\rtt(s),\quad (\omega,s)\in \mathbb S^{d-1}\times \R.
\]

For $\lambda>0$, let us consider the following region
\beas
K_\lambda
&=& \{\xi=(\xi_1,\xi')\in\mathbb S^{d-1}: |\xi_1|\geq \tan(\lambda) |\xi'|\}.
\eeas 

Then, by \cite[Theorem~2]{PD88}, for a fixed $s\in\R$, we can recover the filtered Radon transform $\mathcal R_\lambda(f)(\omega,s)$ for any missing angle $\omega=(\omega_1, \omega')\in \mathbb S^{d-1}\setminus K_\lambda$ via the following integral equation.
\[
\mathcal R_{d,\Lambda}(f)(\omega,s)=\int_{\R} d\alpha \int_{|\tau|\geq \rho} \frac{d\tau}{|\tau-\omega_1|} E_N(\tau,\beta,\gamma) \mathcal R(f)(v,s-R\alpha),
\]
where $\rho = \tan(\lambda)|\omega'|$, $v=(\tau,\omega')$, and the kernel $E_N$ is given by:
\bea\label{eq:P-D transform}
\nonumber E_N(\tau,\beta,\gamma) = \frac{\exp(N\gamma)}{2\pi^2} \left[ \frac{\sin(N(\tau-\beta)-\Theta_-)}{\tau-\beta} - \frac{\sin(N(\tau+\beta)-\Theta_+)}{\tau+\beta} \right]\\  
+\frac{1}{2\pi^2} \left[ \frac{\sin(N(\tau-\beta))}{\tau-\beta} - \frac{\sin(N(\tau+\beta))}{\tau+\beta} \right],
\eea
with parameters defined as $\beta = \sqrt{1-|\omega'|^2}$, $\gamma = \sqrt{\omega_1^2 - \tau^2}$, $\Theta_\pm = \arg(\pm \beta + i\gamma)$, and $N=\Lambda R$, where $R$ is a positive number such that $\operatorname{supp} f $ is contained in the $R$ radius ball centered at the origin.
However, the region $K_\lambda$ may not be a subset of $\mathbb S^{d-1}_+ \cup \mathbb S_{-}^{d-1}$, for any choice of $\lambda$. Although by taking $\lambda=\arctan\frac{1}{\sqrt{d-1}}$, and applying a rotation that takes the point $(1,0,\cdots,0)$ to $\ltt\frac{1}{\sqrt d},\frac{1}{\sqrt d},\cdots,\frac{1}{\sqrt d}\rtt$, we can shift $K_\lambda$ into $\mathbb S^{d-1}_+ \cup \mathbb S^{d-1}_-$, and can apply the following algorithm to recover the missing angles for $\mathcal R(f)$.
    \begin{itemize}
        \item [(i)] Begin with a compactly supported smooth function $f\in \mathcal C^\infty_c(\R^d_+)$.
        \item[(ii)] Consider the rotated function $f_{new}=f\circ A$, where $A$ is a unitary matrix that maps the vector $\ltt\frac{1}{\sqrt d},\frac{1}{\sqrt d},\cdots,\frac{1}{\sqrt d}\rtt$ to $(1,0,\cdots,0)$. Note that $A(K_{\lambda})\subseteq \mathbb S_+^{d-1}\cup \mathbb S^{d-1}_{-}$.
        \item[(iii)] By the rotational symmetry of the Radon transform, the Radon transform of $f_{\text{new}}$ is related to the Radon transform of $f$ via:
    \[
    \mathcal R(f_{\text{new}})(\omega,s) = \mathcal R(f)(A\omega,s),\quad (\omega,s)\in \mathbb S^{d-1}\times \R.
    \]
    Consequently, our known initial data on the orthant provides the exact values of $\mathcal R(f_{\text{new}})(\omega,s)$ for all $(\omega,s) \in K_\lambda \times \R$.
        \item[(iv)]Consider the filtered Radon transform $\mathcal R_{d,\Lambda}(f_{new})$, and by applying \eqref{eq:P-D transform}, we can recover $\mathcal R_{d,\Lambda}(f_{new})$ globally from the values in $K_{\lambda}\times \R$.
        \item[(v)] Then we can  recover $\mathcal F_{d,\Lambda}(s\omega)$, using the relation,
        \[
        \mathcal F_1\ltt\mathcal R_{d,\Lambda}(f)(\omega,\cdot)\rtt(s)= \mathcal F_{d,\Lambda}(f_{new})(s\omega),\quad s\in\R.
        \]
        \item[(vi)] Now, the filtered Fourier transform allows us to recover $\mathcal F_d(f_{new})(s\omega)$, for all $(s,\omega)\in \R\times \mathbb S^{d-1}$.   
        \item[(vii)] By the Fourier slice theorem, we recover $\mathcal R(f_{new})(\omega,s)$, for all $(\omega,s)\in\mathbb S^{d-1}\times \R$. Finally, by applying the inverse of $A$, we can recover $\mathcal R(f)(\omega,s)$, for all $(\omega,s)\in\mathbb S^{d-1}\times \R$.
    \end{itemize}

\end{proof}

\section{Finite determinateness} \label{Section:Finite-Deter}
In this section, we explore an array of finite determinateness phenomena of completely monotone functions on a cone $\Gamma$ via their finite jets (i.e., the Taylor coefficients up to certain given orders at finitely many points in $\operatorname{int} \Gamma$).
For instance, the Laplace (respectively, Stieltjes-Fantappi\'e) transform of a finitely supported positive measure on $\Gamma^*$ yields a finite positive linear combination of the exponentials $\exp(-p \cdot x)$ (respectively, a finite positive linear combination of simple fractions $1/(p_0 + p \cdot x)$), where $x \in \operatorname{supp}(\mu)$. And, of course these finite sums are uniquely determined finitely many jets. An early occurrence of Hankel kernels built on jets of completely monotone functions aimed at detecting their finiteness appeared in Karlin's monograph \cite[Section 7 of Chapter 2]{Karlin}.

As ensured by the following abstract result of Richter and Tchakalov \cite[Theorem 1.24]{Schmudgen-book}, the classes of completely monotone functions under consideration contain an abundance of these finitely determined functions matching prescribed jet data.
\begin{theorem}[Richter-Tchakalov] \label{Thm:RT}
		Suppose that ($\mathcal{Y}, \mu$) is a measure space and $V$ is a finite-dimensional linear subspace of $C_\mathbb{R}(\mathcal{Y}) \cap L^1_\mathbb{R}(\mathcal{Y}, \mu)$. Let $L^\mu$ be the linear functional on $V$ defined by $L^\mu(f) =\int f d\mu$, for $f\in V$. Then there is a positive $k$-atomic measure $\nu = \sum_{j=1}^k m_j \delta_{x^{(j)}}$, where $k\leq \operatorname{dim} V$, such that
		\begin{align*}
			\int fd\mu = \int f d\nu = \sum_{j=1}^k m_j f(x^{(j)}), \ \text{ for } f\in V.
		\end{align*}
	\end{theorem}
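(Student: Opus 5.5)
The plan is the classical convexity argument: reduce the integral identity to a finite-dimensional moment statement and then apply Carathéodory's theorem. Fix a basis $f_1,\dots,f_n$ of $V$, so $n=\dim V$, and consider the moment map
$$\Phi:\mathcal{Y}\to\mathbb{R}^n,\qquad \Phi(y)=(f_1(y),\dots,f_n(y)).$$
Let $s=\int \Phi\,d\mu\in\mathbb{R}^n$; it is well defined because each $f_i\in L^1(\mu)$. It suffices to show that $s$ lies in the convex cone $C$ generated by $\Phi(\mathcal{Y})$. Once this is known, Carathéodory's theorem for cones gives $s=\sum_{j=1}^k m_j\Phi(x^{(j)})$ with $m_j>0$ and $k\le n$, and by linearity this is exactly the identity $\int f\,d\mu=\sum_j m_j f(x^{(j)})$ for all $f\in V$. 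The degenerate case $s=0$ is covered by $k=0$, or by an arbitrary point with zero weight.

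The substantive step is proving $s\in C$, not merely $s\in\overline{C}$. I would argue by induction on $n$ using supporting hyperplanes. If $s\notin \operatorname{ri} C$ (the relative interior, taken inside the span of $C$), then either $s$ lies outside the span of $C$ or $s$ lies on the relative boundary. In both cases there is a linear functional $\ell\ne0$ on that span, or on $\mathbb{R}^n$, with $\ell\ge0$ on $C$ and $\ell(s)\le 0$. Put $g=\ell\circ\Phi\in V$. Then $g\ge0$ on $\mathcal{Y}$ and $\int g\,d\mu=\ell(s)\le0$, so $g=0$ $\mu$-a.e. If $\ell$ is nonzero on $C$, we can restrict attention to the set $\mathcal{Y}'=\{g=0\}$. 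This set carries full $\mu$-measure, and continuity is needed only to make $\mathcal{Y}'$ a well-defined closed set; measurability would in fact suffice. The cone generated by $\Phi(\mathcal{Y}')$ lies in the face $C\cap\ker\ell$, which has strictly smaller dimension.

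Next replace $V$ by $V'=\{f|_{\mathcal{Y}'}: f\in V\}$, whose dimension is less than $n$ because $g$ restricts to zero. Apply the induction hypothesis to $(\mathcal{Y}',\mu|_{\mathcal{Y}'})$ and $V'$. This produces atoms in $\mathcal{Y}'$, at most $\dim V'<n$ of them, that reproduce the integrals of all $f\in V$, since $\mu(\mathcal{Y}\setminus\mathcal{Y}')=0$.

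If instead $s\in\operatorname{ri}C$, then $s\in C$ directly, because the relative interior of a convex set is contained in the set. One more case must be handled: $\ell$ vanishes on $C$ but $\ell(s)\neq0$, which happens when $s$ is outside the span. Then $\int g\,d\mu=\ell(s)$ while $g=\ell\circ\Phi\equiv0$, so $\ell(s)=0$, a contradiction. Hence $s$ always lies in the span of $C$.

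The obstacle I expect is exactly the difference between $s\in\overline{C}$, which is easy, and $s\in C$. The supporting-hyperplane induction is designed to handle it: it uses positivity of $\mu$ only to conclude that a nonnegative function with nonpositive integral vanishes almost everywhere. The base case $n=0$, or $V=\{0\}$, is trivial with the empty measure.
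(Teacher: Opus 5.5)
Your proof is correct and is essentially the standard argument. The paper gives no proof of its own; it quotes the result from \cite[Theorem 1.24]{Schmudgen-book}, and the argument there is, as far as I recall, the same scheme as yours: induction on $\dim V$, where a supporting functional $\ell$ produces $g=\ell\circ\Phi\in V$ with $g\ge 0$ and $\int g\,d\mu\le 0$, one passes to the full-measure set $\{g=0\}$ on which $V$ drops in dimension, and Carath\'eodory's theorem for cones finishes the proof.

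There are two cosmetic points. First, $f_1,\dots,f_n$ are linearly independent as functions, so $\Phi(\mathcal{Y})$ spans $\mathbb{R}^n$; hence $C$ is full-dimensional, $\operatorname{ri} C=\operatorname{int} C$, and your ``outside the span'' case never arises. Second, your dichotomy leaves out the possibility $s\notin\overline{C}$, but your separation step handles that case without any change.
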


As a consequence, relying upon a non-constructive proof, we derive the semi-algebraic nature of the set of Taylor coefficients of the Laplace transforms up to a fixed order. Then we will exploit the Hankel kernels associated with jets of a complete monotone function to analyze finite determinateness.

	\subsection{Semi-algebraic nature of the coefficient set} 
	Let us fix a point $\lambda$ in $\operatorname{int} \Gamma$ and $N \in \mathbb{N}_0$. Let $\mu$ be a measure on $\Gamma^*$ with moderate growth. Then complexified Laplace transform $F_\mu(z)$ has a power series representation
    $$
	F_\mu(z) = \sum_{\alpha \in \mathbb{N}_0^d} c_{\alpha } (F_\mu) \ (z-\lambda)^\alpha
	$$
    in a neighbourhood of the point $\lambda$ in the tube domain $T_{\Gamma}$, where 
    $$ c_\alpha(F_\mu) = \frac{1}{\alpha !} \partial^\alpha_z F_\mu|_{z=\lambda} = (-1)^{|\alpha|} \int_{\Gamma^*} \lambda^\alpha \exp(-\lambda \cdot x) \ d\mu(x).$$
	Note that, $c_\alpha(F_\mu) \in \mathbb{R}$ as $\lambda \in \operatorname{int} \Gamma$.
	We consider the coefficient set of order $N$ as follows:
	$$
	\mathcal{C}_N = \left \lbrace  (c_{\alpha} (F_\mu))_{|\alpha | \leq N} \in \mathbb{R}^{d_N} : \mu \text{ is a measure of moderate growth on $\Gamma^*$} \right \rbrace,
	$$
	where $d_N = \binom{N+d}{d}$. Clearly, $\mathcal{C}_N$ is a closed convex subset of $\mathbb{R}^{d_N}$. 

    A subset of $\mathbb{R}^d$ is said to be (real) {\em semi-algebraic} if it is a finite union of sets defined by polynomial (with real coefficients) equalities and inequalities. See \cite[Chapter 2]{BCR} for more details.
	\begin{proposition} \label{coefficient}
	Suppose the closed convex cone $\Gamma^*$ is defined by a finite system of polynomial inequalities. Then $\mathcal{C}_N$ is a semi-algebraic set.
   
	\end{proposition}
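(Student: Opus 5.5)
We reduce the statement to Theorem~\ref{Thm:RT} together with the Tarski--Seidenberg theorem. First, note that $c_\alpha(F_\mu)$ is obtained from the moments of the finite positive measure $d\nu(x) = \exp(-\lambda\cdot x)\,d\mu(x)$ by an invertible linear change of coordinates. Indeed, the Taylor coefficient is
\[
c_\alpha(F_\mu) = \frac{(-1)^{|\alpha|}}{\alpha!}\int_{\Gamma^*} x^\alpha\, e^{-\lambda\cdot x}\,d\mu(x).
\]
The integral is finite because $\mu$ has moderate growth and $\lambda \in \operatorname{int}\Gamma$, so that $x^\alpha e^{-\lambda\cdot x/2}$ is bounded on $\Gamma^*$. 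Since semi-algebraicity is preserved under linear isomorphisms, it suffices to show that the truncated moment set
\[
\mathcal M_N = \Big\{ \big(\textstyle\int x^\alpha\, d\nu\big)_{|\alpha|\le N} : \nu = e^{-\lambda\cdot x}\mu,\ \mu \text{ of moderate growth on } \Gamma^* \Big\}
\]
is semi-algebraic. Conversely, any finite positive measure $\nu$ on $\Gamma^*$ with finite moments of order up to $N$ arises in this way, via $\mu = e^{\lambda\cdot x}\nu$. Finitely atomic $\nu$ certainly qualify.

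Next, apply Theorem~\ref{Thm:RT} with $\mathcal Y = \Gamma^*$, the measure $\nu$, and $V$ the space of real polynomials of degree at most $N$. Then $V \subset C_\mathbb{R}(\Gamma^*)\cap L^1(\nu)$ and $\dim V = d_N$. The theorem yields a $k$-atomic positive measure with $k \le d_N$ having the same moments up to order $N$. Padding with zero weights, we obtain
\[
\mathcal M_N = \Big\{ \big(\textstyle\sum_{j=1}^{d_N} m_j (x^{(j)})^\alpha\big)_{|\alpha|\le N} : m_j \ge 0,\ x^{(j)} \in \Gamma^* \Big\}.
\]
The inclusion $\supseteq$ holds because atomic measures are admissible, as noted in the first step.

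The right-hand side is the image of the set
\[
S = \{(m,x) \in \mathbb R^{d_N}\times(\mathbb R^d)^{d_N} : m_j\ge 0,\ x^{(j)}\in\Gamma^*\}
\]
under a polynomial map. By hypothesis, $\Gamma^*$ is defined by finitely many polynomial inequalities, so $S$ is semi-algebraic. The Tarski--Seidenberg theorem \cite[Chapter 2]{BCR} then shows that the polynomial image of $S$ is semi-algebraic. Pulling back through the linear isomorphism gives the conclusion that $\mathcal C_N$ is semi-algebraic.

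The only delicate step is verifying the hypotheses of Theorem~\ref{Thm:RT}, namely integrability of polynomials against $\nu$. This is handled by the moderate-growth estimate above. The non-constructive character of the argument sits entirely in the Richter--Tchakalov step.
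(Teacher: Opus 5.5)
Your proof is correct and follows the paper's argument. Richter--Tchakalov reduces $\mathcal C_N$ to a linear image of the truncated moment vectors of at most $d_N$-atomic positive measures on $\Gamma^*$, and Tarski--Seidenberg applied to this polynomial parametrization gives semi-algebraicity; applying Richter--Tchakalov to $e^{-\lambda\cdot x}\mu$ with polynomials is the same as the paper's ``absorbing the exponentials into the weights.'' One small overstatement: not every finite $\nu$ with finite moments up to order $N$ yields a moderate-growth measure $\mu=e^{\lambda\cdot x}\nu$, since this needs $e^{(\lambda-p)\cdot x}\in L^1(\nu)$ for all $p\in\operatorname{int}\Gamma$, but you only use the finitely atomic case, where it clearly holds.
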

	\begin{proof}
		Let $(c_{\alpha})_{|\alpha | \leq N} = (c_{\alpha}(F_\mu))_{|\alpha| \leq N}  \in \mathcal{C}_N$ for some measure $\mu$ on $\Gamma^*$ having moderate growth.
         In view of Theorem \ref{Thm:RT}, there exists a positive quadrature formula with weights $\{w_k: 1\leq m\}$ and nodes $\{ x^{(k)} \in \Gamma^*: 1\leq m\}$ for some $m \leq d_N$ such that 
		\begin{align}
			\int_{\Gamma^*} x^\alpha \exp(- \lambda \cdot x)\ d\mu(x) = \sum_{k=1}^{m} w_k \ (x^{(k)})^\alpha \exp(-\lambda \cdot x^{(k)}),
		\end{align}
		for $0 \leq |\alpha| \leq N$. Set, $\nu = \sum_{k=1}^{m} w_k\ \delta_{x^{(k)}}$, where $\delta_{x^{(k)}}$ denotes the Dirac mass at the point $x^{(k)}$. Therefore, $c_{\alpha} = c_{\alpha}(F_\nu)$.
		Thus, we can describe $\mathcal{C}_N$ as 
		$$
		\left \lbrace (c_{\alpha}(F_\eta) )_{ |\alpha| \leq N} \in \mathbb{R}^{d_N} : \eta = \sum_{k=1}^{d_N} w_k \delta_{x^{(k)}} \in \ \mathcal{M}^+(\Gamma^*), \ \ w_k \geq 0 \text{ and } x^{(k)} \in \Gamma^* \right \rbrace.
		$$

By absorbing the exponentials  $\exp(-\lambda \cdot x^{(k)})$ into the non-negative coefficients, an element $(c_{\alpha} )_{|\alpha| \leq N}$ belongs to  the coefficient set $\mathcal{C}_N$ if and only if there exists a finite, positive measure $\sigma$ supported by at most $d_N$ points of $\Gamma^\ast$ such that
		$$ 
        c_\alpha = \int_\Gamma x^\alpha d\sigma(x), \ \ |\alpha| \leq N.
        $$

		Since the convex cone $\Gamma^*$ is defined by a finite system of polynomial inequalities, it is semi-algebraic. So, in virtue of  Tarski's  elimination of quantifiers principle \cite{BCR} applied to the last description of $\mathcal{C}_N$, we conclude that $\mathcal{C}_N$ is a semi-algebraic set in $\mathbb{R}^{d_N}$.
	\end{proof}

	As a byproduct of the preceding proofs, the coefficient set (or equivalently, the jet at a prescribed point) of completely monotone functions defined on $\Gamma$ coincides with the power moments of finitely supported positive measures on $\Gamma^*$. An immediate observation is the positivity condition involving the associated Hankel kernel:
	$$ \sum_{|\alpha +\beta| \leq N} c_{\alpha + \beta} s_\alpha s_\beta =  \int_\Gamma |\sum_\gamma s_\gamma x^\gamma|^2 d\sigma(x) \geq 0, \ \ s_\alpha, s_\beta \in {\mathbb R}.$$
	
	\begin{corollary} In the condition of Proposition~\ref{coefficient}, assume the dimension $d$ of the underlying space is bigger than $1$ and choose $N \geq 6.$ Then, there exists a
	coefficient system $(c_\gamma)_{|\gamma| \leq 6}$ with the property that the Hankel form $(\alpha, \beta) \mapsto c_{\alpha+\beta}, \ \ |\alpha|, |\beta| \leq 3$ is PSD but there is no point $\lambda \in {\rm int \Gamma}$ such that $(c_\gamma) \in {\mathcal{C}_N }.$
	\end{corollary}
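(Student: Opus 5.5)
The plan is to exploit the gap between positive semi-definite Hankel forms and genuine moment sequences, which is governed by the existence of nonnegative polynomials that are not sums of squares. By the description obtained in the proof of Proposition~\ref{coefficient}, $(c_\gamma)\in\mathcal{C}_N$ means that $c_\gamma=\int x^\gamma d\sigma$, $|\gamma|\le N$, for a finitely atomic positive measure $\sigma$ on $\Gamma^\ast$. In particular, the truncated sequence $(c_\gamma)_{|\gamma|\le 6}$ must then be a moment sequence of a positive measure. (The point $\lambda$ only rescales by positive weights $e^{-\lambda\cdot x}$, so the obstruction below will be independent of $\lambda$.)

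For $d=2$, I would take a polynomial that is nonnegative on $\mathbb{R}^2$ but not a sum of squares. The natural choice is the dehomogenized Motzkin polynomial
\[
M(x_1,x_2)=x_1^4x_2^2+x_1^2x_2^4-3x_1^2x_2^2+1 ,
\]
which has degree $6$. For $d>2$, I would regard $M$ as a polynomial in the first two coordinates. Since $\Gamma^\ast$ is solid, I would first apply an affine change of variables, or simply a translate, so that $M$ is considered on $\Gamma^\ast$; nonnegativity everywhere suffices. The cone $\Sigma_6$ of sums of squares of degree $\le 6$ is closed, and $M\notin\Sigma_6$. By the Hahn--Banach separation theorem in the finite-dimensional space of polynomials of degree $\le6$, there is a linear functional $L$ with $L(p^2)\ge 0$ for all $\deg p\le 3$ and $L(M)<0$. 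Setting $c_\gamma=L(x^\gamma)$ then makes the Hankel form $c_{\alpha+\beta}$, $|\alpha|,|\beta|\le3$, positive semi-definite.

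On the other hand, suppose $(c_\gamma)$ were equal to $\int x^\gamma e^{-\lambda\cdot x}\,d\mu$ up to order $6$ for some $\lambda$, or, after the Richter--Tchakalov reduction, equal to $\int x^\gamma\,d\sigma$. Then $L(M)=\int M\,d\sigma\ge0$, which is a contradiction. The same argument works for every $\lambda\in\operatorname{int}\Gamma$. When $N>6$, we take $c_\gamma$ for $|\gamma|\le 6$ as above, and the membership condition restricted to orders $\le 6$ already fails.

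The main obstacle is bookkeeping about the sign convention. The paper's $c_\alpha(F_\mu)$ carry a factor $(-1)^{|\alpha|}$ (and $\lambda^\alpha$ should really read $x^\alpha$), so I would state the corollary for the normalized moments $(-1)^{|\gamma|}c_\gamma$, consistent with the Hankel positivity displayed before the corollary. A second point needs checking. If the support is forced into $\Gamma^\ast$ rather than $\mathbb{R}^d$, the cone of moment sequences only shrinks, so the obstruction persists. However, one must verify that the separating $L$ is not accidentally realizable by a measure on $\Gamma^\ast$. This is automatic, because any such realization would give $\int M\ge0$.
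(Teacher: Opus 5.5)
Your proposal is correct and is essentially the paper's argument. The paper only asserts that, by duality, the corollary reduces to exhibiting a degree-$6$ polynomial that is nonnegative but not a sum of squares, and then names Motzkin's polynomial; you make that duality explicit by separating $M$ from the closed cone $\Sigma_6$ and by using the description of $\mathcal{C}_N$ as truncated moment sequences of positive measures on $\Gamma^\ast$. Because $M\ge 0$ on all of $\mathbb{R}^d$, the sign issue you flagged as the main obstacle is harmless: the factor $(-1)^{|\gamma|}$ only reflects the representing measure onto $-\Gamma^\ast$, and $\int M\ge 0$ still yields the contradiction. Your remarks on independence from $\lambda$ and on the case $N>6$ supply details that the paper leaves implicit.
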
 
	
	By duality, the statement is equivalent to the existence of a polynomial $p(x)$ of degree $6$ which is non-negative on $\Gamma$ and it cannot be decomposed as a sum of squares of polynomials. The polynomial optimization community is well aware of this pitfall \cite{NP}. Since the interior of $\Gamma$ is non-empty, Motzkin's polynomial $x_1^2 x_2^2 (x^2 + x_2 - 2) + 1$ serves as an example. For details we refer to \cite[Chapter 17]{Schmudgen-book} and \cite{Evelyne,Mourrain,MS16} for a wider perspective on this notorious difficulty arising in multivariate (truncated) moment problems.

	A refined analysis of real algebraic geometry aspects of semi-algebraic convex cones in Euclidean space is contained in Sinn's article \cite{Sinn}.

\begin{remark}
    As above we can also consider the coefficient set of the complexified Stieltjes-Fantappi\`e transforms $\Phi_\mu(z, z_0)$ of finite measures $\mu$ on $\Gamma^*$ by fixing a point $(\lambda, 1) \in \operatorname{int} \Gamma \times \mathbb{R}_+$ with Taylor coefficients,
$$
c_{\alpha, j} (\Phi_\mu) = \frac{1}{\alpha! j !} \ \partial^j_{z_0} \partial_z^\alpha \Phi_\mu |_{(z, z_0)=(\lambda, 1)}, \ \ \ 0 \leq |\alpha|+ j \leq N. 
$$
Set $d_N' = \binom{N+ d+1}{d+1}$. Then as in Proposition~\ref{coefficient}, we can prove that the coefficient set 
	$$
	 \widetilde{\mathcal{C}_N } = \left \lbrace  (c_{\alpha j} (\Phi_\mu))_{|\alpha | \leq N} \in \mathbb{R}^{d_N'} : \mu \in \mathcal{M}^+(\Gamma^*) \right \rbrace
	$$
	is a semi-algebraic set in $\mathbb{R}^{d_N'}$, whenever $\Gamma^*$ can be defined by finitely many polynomial inequalities.
\end{remark}

	\subsection{Hankel kernels associated to Laplace transforms} \label{Subsec:flat-Laplace}

     We translate the complete monotonicity of Laplace transforms into the positivity of Hankel kernels formed by their jets at prescribed interpolation points. This dictionary characterizes the Laplace transforms of finite atomic positive measures, also known as {\em completely monotone exponential polynomials}.

	Let $\mu$ be a non-negative measure on $\Gamma^*$ of moderate growth. Then its Laplace transform
	$$
	F(p)= \int_{\Gamma^*} e^{-p\cdot x} d\mu(x) \ \ \text{ for } p \in \Gamma
	$$
	belongs to $C^\infty(\operatorname{int} \Gamma)$ and satisfies \eqref{Eq:CM-cond}.
	
	Let $S=\{ p^{(j)} : 1\leq j \leq N\} \subset \operatorname{int} \Gamma$ be a finite collection nodes in $\operatorname{int} \Gamma$. 
	For each $n \in \mathbb{N}_0$, we consider the following chain of finite dimensional subspaces of $C_\mathbb{R}(\Gamma^*)$:
	$$
	\mathcal{M}_n = \operatorname{span}_{\mathbb{R}} \{x^\alpha e^{-p^{(j)} \cdot x}: |\alpha| \leq n, 1\leq j \leq N \}.
	$$

    Let us consider the following ordering $\leq$ on this basis of $\mathcal{M}_n$: 
    \begin{itemize}
        \item[(i)] if $j < s$, then $x^\alpha e^{-p^{(j)}\cdot x} \leq x^\beta e^{-p^{(s)}\cdot x}$ for every $|\alpha|, |\beta| \leq n$;
        \item[(ii)] if $j=s$, then $x^\alpha e^{-p^{(j)}\cdot x} \leq x^\beta e^{-p^{(s)}\cdot x}$ provided $\alpha \leq \beta$ in the lexicographic ordering on $\mathbb{N}_0^d$.
    \end{itemize}
	We now define the positive semi-definite kernel $K_n$ on $\mathcal{M}_n \times \mathcal{M}_n$ as follows:
	$$
	K_n( f, g) \bydef \int_{\Gamma^*} f(x) g(x) d\mu(x), \ \ f, g \in \mathcal{M}_n.
	$$
	It is easy to check that $K_n$ is a positive semi-definite kernel. 
	Let $d_n$ denote the dimension of the space $\mathcal M_n$. The Hankel matrix $\Xi_{n}$ (with respect to the above ordered basis of $\mathcal{M}_n$) induced by $K_n$ on $\mathcal M_n$ is the real positive semi-definite matrix of order $d_n$ such that
	\[
	\left[\Xi_{n}\right]_{\alpha j, \beta k}=\int_{\Gamma^*} x^{\alpha+\beta} e^{-(p^{(j)} + p^{(k)}) \cdot x} d\mu(x).
	\]
	{\em The rank of the matrix $\Xi_n$ is called the rank of the positive semi-definite kernel $K_n$}.
	\bigskip

    \noindent \textbf{GNS construction for $K_n$:}
The kernel $K_n$ induces a positive semi-definite inner product $\langle \cdot , \cdot\rangle_{K_n}$ on $\mathcal{M}_n \times \mathcal{M}_n$ by 
    $$\langle f, g \rangle_{K_n} = K_n(f, g), \ \ f, g\in \mathcal{M}_n.$$
    The null space corresponding to the kernel $K_n$ is the space
	\begin{align*}
		\mathcal{N}_n = \{f \in \mathcal{M}_n : \langle f, f\rangle_{K_n} = 0 \}.
	\end{align*}
    By Cauchy-Schwarz inequality for positive semi-definite inner product, $\mathcal{N}_n$ can also be described as 
	\begin{align*}
		\mathcal{N}_n = \{f \in \mathcal{M}_n : \langle f, g\rangle_{K_n} = 0, \ \forall g\in \mathcal{M}_n \}.
	\end{align*}
    Therefore, the quotient space $\mathcal{H}_n = \mathcal{M}_n / \mathcal{N}_n$ becomes a real Hilbert space under the induced inner product i.e., 
	$$
	\langle f+ \mathcal{N}_n, g + \mathcal{N}_n \rangle = \langle f, g \rangle_{K_n}. 
	$$
	Thus, $\operatorname{dim} \mathcal{H}_n = \operatorname{rank}(K_n)$.
	Moreover, we can view $\mathcal{H}_n$ as $[\mathcal{M}_n ]$, where $[ \cdot ]$ represents the class of an element, or a vector space of functions in $L^2(\Gamma^*, \mu)$.
	In other terms
	$$ \mathcal{H}_n  =  \operatorname{span}_{\mathbb{R}} \{[x^\alpha e^{-p^{(j)} \cdot x}]\in L^2(\Gamma^*, \mu): |\alpha| \leq n, 1\leq j \leq N \}.$$

    We are now set to state one of our main results regarding the finite determinateness of the complete monotone functions on $\Gamma$.
	\begin{theorem} \label{Thm-multinode-flat}
		Let $\mu$ be a positive measure of moderate growth defined on $\Gamma^\ast$ and let $S=\{ p^{(j)} : 1\leq j \leq N\}$ be a finite collection of points in $ \operatorname{int} \Gamma$. The measure $\mu$ has finite support if and only if there exists a positive integer $n$ with the property $\operatorname{rank}(K_n) = \operatorname{rank}(K_{n+1})$.
	\end{theorem}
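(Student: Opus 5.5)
The forward implication will come from a dimension count, and the converse from the stabilization $\mathcal H_n=\mathcal H_{n+1}$ inside $L^2(\Gamma^*,\mu)$, which yields finite-dimensional spaces invariant under multiplication by the coordinates. Throughout I use the identification from the GNS construction: $\mathcal H_n$ is the linear span of the classes $[x^\alpha e^{-p^{(j)}\cdot x}]$, $|\alpha|\le n$, in $L^2(\Gamma^*,\mu)$. Since $\mathcal M_n\subset\mathcal M_{n+1}$, we have $\mathcal H_n\subset\mathcal H_{n+1}$ and $\rnk(K_n)=\dim\mathcal H_n$ is nondecreasing in $n$. All these functions lie in $L^2(\mu)$ by moderate growth. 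Indeed, $x^{2\alpha}e^{-2p\cdot x}\le C_\varepsilon e^{-(2-\varepsilon)p\cdot x}$ on $\Gamma^*$, because $p\cdot x\ge c|x|$ there for $p\in\operatorname{int}\Gamma$, and $(2-\varepsilon)p\in\operatorname{int}\Gamma$. The same bound shows that $x_i f\in L^2(\mu)$ whenever $f$ is a polynomial times $e^{-p^{(j)}\cdot x}$.

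\emph{Necessity.} Suppose $\mu=\sum_{k=1}^m w_k\delta_{x^{(k)}}$. Then $L^2(\Gamma^*,\mu)$ has dimension at most $m$, so $\dim\mathcal H_n\le m$ for all $n$. A nondecreasing bounded sequence of integers is eventually constant, so some $n$ satisfies $\rnk(K_n)=\rnk(K_{n+1})$.

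\emph{Sufficiency.} Assume $\rnk(K_n)=\rnk(K_{n+1})=r$. Then $\mathcal H_n=\mathcal H_{n+1}$ as subspaces of $L^2(\mu)$.

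First I show $\mathcal H_{n+1}=\mathcal H_{n+2}$, and hence by induction $\mathcal H_m=\mathcal H_n$ for all $m\ge n$. Take $f\in\mathcal M_{n+1}$. Then $[f]=[g]$ for some $g\in\mathcal M_n$, so $f=g$ $\mu$-a.e. Hence $x_if=x_ig$ $\mu$-a.e., and $[x_ig]\in\mathcal H_{n+1}=\mathcal H_n$. Since $\mathcal M_{n+2}$ is spanned by $\mathcal M_{n+1}$ together with such products $x_if$, this gives $\mathcal H_{n+2}\subset\mathcal H_n$.

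The same observation shows that multiplication by $x_i$ is a well-defined linear operator $M_i$ on the $r$-dimensional space $V=\bigcup_m\mathcal H_m=\mathcal H_n$. It is well defined on classes precisely because $f=g$ $\mu$-a.e. implies $x_if=x_ig$ $\mu$-a.e. This well-definedness, together with the integrability check above, is the only delicate point; the rest is linear algebra.

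Let $q_i$ be the minimal polynomial of $M_i$ on $V$, so $\deg q_i\le r$. Put $h=e^{-p^{(1)}\cdot x}$. Then $[q_i(x_i)h]=q_i(M_i)[h]=0$ in $L^2(\mu)$, i.e. $q_i(x_i)e^{-p^{(1)}\cdot x}=0$ $\mu$-a.e. The exponential is strictly positive, so $\mu$ is concentrated on the finite set
$$Z=\{x\in\Gamma^*: q_1(x_1)=\cdots=q_d(x_d)=0\}.$$
Hence $\mu$ is finitely supported.

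Optionally, the number of atoms can be bounded by $r$. The classes of point indicators $\mathbbm 1_{\{z\}}$, $z\in Z\cap\operatorname{supp}\mu$, are linearly independent in $L^2(\mu)$. Each of them is of the form $[P\,h]$ for an interpolating polynomial $P$, and $[Ph]\in V$. Therefore $\mu$ has at most $r$ atoms.
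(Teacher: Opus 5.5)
Your proof is correct. It follows the same skeleton as the paper: rank stabilization gives $\mathcal{H}_n=\mathcal{H}_{n+k}$ for all $k$, hence a finite-dimensional space invariant under the coordinate multipliers, hence a polynomial vanishing $\mu$-a.e. The execution differs at each stage, in the following ways.

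\textbf{Stabilization.} The paper proves Lemmas~\ref{Lem-1}--\ref{Lem-4} on the GNS null spaces, using quotient-map dimension counting and then a Cauchy--Schwarz estimate with $K_{n+3}$, to reach $\mathcal{M}_{n+2}=\mathcal{M}_n+\mathcal{N}_{n+2}$ and the invariance $x_j\mathcal{N}_{n+1}\subseteq\mathcal{N}_{n+1}$. You observe that $\mathcal{N}_m$ is exactly the set of elements of $\mathcal{M}_m$ vanishing $\mu$-a.e., which makes these facts immediate.

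\textbf{Annihilating polynomial.} The paper uses that $\mathbf{X}$ is a commuting symmetric tuple. It invokes its joint spectral resolution and builds $R(x)=\prod_j\|x-\lambda^{(j)}\|^2$ with $R(\mathbf{X})=0$. You use only the minimal polynomial of each $M_i$ separately, which requires neither commutativity nor symmetry.

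\textbf{Trade-off.} You first localize $\mu$ on the coarser grid $\prod_i q_i^{-1}(0)$. You then need the extra interpolation step to get the bound of $r=\operatorname{rank}K_n$ atoms. The paper obtains $\operatorname{supp}\mu\subseteq\sigma(\mathbf{X})$ directly, with at most $r$ points, and uses this identification in its subsequent remark on the spectral measure. The paper's route stays in the kernel/GNS formalism familiar from flat-extension theory; yours exploits the measure directly and is shorter.

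\textbf{Necessity.} Your argument that a bounded nondecreasing integer sequence stabilizes is cleaner than the paper's density remark. Your explicit verification, via moderate growth, that $x^\alpha e^{-p\cdot x}\in L^2(\Gamma^*,\mu)$ makes precise a point the paper leaves implicit.
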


	In the context of multivariate power moment problems, the rank stability phenomenon described in the statement above is known as a {\it flat extension} \cite{MS16}.
    To prove the theorem, we establish a sequence of algebraic lemmas and utilize the Gelfand–Naimark–Segal (GNS) construction associated with the positive semi-definite Hankel kernels introduced above. Throughout this subsection, we fix $\mu$ and $n$ according to the statement of Theorem~\ref{Thm-multinode-flat}. Both implications will be demonstrated simultaneously.

	\vspace{2mm}
    
	In the lemmas below we assume $\operatorname{rank}(K_n) = \operatorname{rank}(K_{n+1})$.
	\begin{lemma} \label{Lem-1}
		 $\mathcal{M}_{n+1} = \mathcal{M}_n + \mathcal{N}_{n+1}.$
	\end{lemma}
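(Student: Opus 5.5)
The plan is to prove the lemma by comparing dimensions. The key observation is that $\mathcal{M}_n \subseteq \mathcal{M}_{n+1}$, and that on this subspace the kernel $K_{n+1}$ agrees with $K_n$. Both kernels are the restriction of the same bilinear form $(f,g)\mapsto \int_{\Gamma^*} fg\,d\mu$ on $L^2(\Gamma^*,\mu)$. It follows that $\mathcal{N}_n = \mathcal{N}_{n+1}\cap \mathcal{M}_n$, because a vector $f\in\mathcal{M}_n$ lies in either null space exactly when $\int f^2\,d\mu=0$.

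\textbf{Step 1.} Consider the natural map
$$\iota:\mathcal{H}_n=\mathcal{M}_n/\mathcal{N}_n \longrightarrow \mathcal{H}_{n+1}=\mathcal{M}_{n+1}/\mathcal{N}_{n+1}, \qquad f+\mathcal{N}_n\mapsto f+\mathcal{N}_{n+1}.$$
This map is well defined because $\mathcal{N}_n\subseteq\mathcal{N}_{n+1}$. It is isometric, since $\langle f,f\rangle_{K_{n+1}}=\langle f,f\rangle_{K_n}$ for $f\in\mathcal{M}_n$, and hence it is injective. Equivalently, in the $L^2(\Gamma^*,\mu)$ picture, $\iota$ is simply the inclusion $[\mathcal{M}_n]\subseteq[\mathcal{M}_{n+1}]$.

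\textbf{Step 2.} Recall that $\dim\mathcal{H}_n=\operatorname{rank}(K_n)$ and $\dim\mathcal{H}_{n+1}=\operatorname{rank}(K_{n+1})$. The hypothesis $\operatorname{rank}(K_n)=\operatorname{rank}(K_{n+1})$ therefore says that the injective linear map $\iota$ goes between finite-dimensional spaces of equal dimension, so $\iota$ is surjective.

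\textbf{Step 3.} Surjectivity of $\iota$ means that for every $g\in\mathcal{M}_{n+1}$ there is some $f\in\mathcal{M}_n$ with $g-f\in\mathcal{N}_{n+1}$. This gives $\mathcal{M}_{n+1}\subseteq\mathcal{M}_n+\mathcal{N}_{n+1}$. The reverse inclusion is immediate, since both $\mathcal{M}_n$ and $\mathcal{N}_{n+1}$ are contained in $\mathcal{M}_{n+1}$.

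There is no real obstacle in this argument. The only point needing care is the compatibility of the two kernels in Step 1, namely that $K_{n+1}$ restricted to $\mathcal{M}_n$ equals $K_n$. This holds because the Hankel entries are integrals against the same measure $\mu$ and the ordered basis of $\mathcal{M}_n$ is a sub-basis of that of $\mathcal{M}_{n+1}$. Once this is in place, the equality of ranks yields the equality of dimensions of the GNS spaces, and the lemma follows.
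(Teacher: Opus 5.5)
Your proof is correct and is essentially the paper's argument. Both proofs take the canonical map into $\mathcal{M}_{n+1}/\mathcal{N}_{n+1}$ and use the rank equality to force surjectivity. The one difference is that you note up front that $\mathcal{N}_n=\mathcal{N}_{n+1}\cap\mathcal{M}_n$, which holds unconditionally because $K_n$ is the restriction of $K_{n+1}$; this lets you use the single injective map $\iota$ in place of the paper's pair $\sigma_1,\sigma_2$ through $\mathcal{M}_n/(\mathcal{N}_{n+1}\cap\mathcal{M}_n)$, and it makes the paper's subsequent Lemma~\ref{Lem-2} automatic, with no rank hypothesis needed.
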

	\begin{proof}
		Clearly, $\mathcal{N}_{n+1} \cap \mathcal{M}_n \subseteq \mathcal{N}_n$. Consider the following canonical maps 
		\begin{align*}
			\sigma_1: \mathcal{M}_n / (\mathcal{N}_{n+1} \cap \mathcal{M}_n) \rightarrow \mathcal{M}_n / \mathcal{N}_n  \text{ and } \sigma_2: \mathcal{M}_n / (\mathcal{N}_{n+1} \cap \mathcal{M}_n) \rightarrow \mathcal{M}_{n+1} / \mathcal{N}_{n+1}
		\end{align*}
		defined as 
		\begin{align*}
			\sigma_1(f + \mathcal{N}_{n+1} \cap \mathcal{M}_n) \bydef f + \mathcal{N}_{n} \text{ and } \sigma_2(f + \mathcal{N}_{n+1} \cap \mathcal{M}_n) \bydef f + \mathcal{N}_{n+1}.
		\end{align*}
		Note that, $\sigma_1$ is surjective and $\sigma_2$ is injective. Thus,
		\begin{align*}
			\operatorname{rank} K_n = \operatorname{dim} \mathcal{M}_n / \mathcal{N}_n \leq \operatorname{dim} \mathcal{M}_n / (\mathcal{N}_{n+1} \cap \mathcal{M}_n) \leq \operatorname{dim} \mathcal{M}_{n+1} / \mathcal{N}_{n+1} = \operatorname{rank} K_{n+1}.
		\end{align*} 
		Since $\operatorname{rank} K_n = \operatorname{rank} K_{n+1}$, the above inequalities become equalities and hence, both $\sigma_1$ and $\sigma_2$ are bijections.
		
		So, if $f\in \mathcal{M}_{n+1}$, then there exists $g\in \mathcal{M}_n$ such that $f+\mathcal{N}_{n+1} = \sigma_2(g+ \mathcal{N}_{n+1} \cap \mathcal{M}_n)$. This implies $f-g \in \mathcal{N}_{n+1}$ and so, $f = g+ \mathcal{N}_{n+1} \in \mathcal{M}_n + \mathcal{N}_{n+1}$. Therefore, $\mathcal{M}_{n+1} \subseteq \mathcal{M}_n + \mathcal{N}_{n+1}$. The reverse inclusion is obvious and this completes the proof.
	\end{proof}
	
	\begin{lemma} \label{Lem-2}
		$$\mathcal{N}_n = \mathcal{N}_{n+1} \cap \mathcal{M}_n.$$
	\end{lemma}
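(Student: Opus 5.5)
The plan is to prove the two inclusions separately. The inclusion $\mathcal{N}_{n+1} \cap \mathcal{M}_n \subseteq \mathcal{N}_n$ was already observed in the proof of Lemma~\ref{Lem-1}, and it does not use the rank hypothesis. If $f \in \mathcal{M}_n$ satisfies $\langle f, g\rangle_{K_{n+1}} = 0$ for all $g \in \mathcal{M}_{n+1}$, then in particular this holds for all $g \in \mathcal{M}_n$. Moreover, $K_{n+1}$ restricted to $\mathcal{M}_n \times \mathcal{M}_n$ is exactly $K_n$, since both are given by the same integral $\int_{\Gamma^*} fg\, d\mu$. Hence $f \in \mathcal{N}_n$.

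For the reverse inclusion $\mathcal{N}_n \subseteq \mathcal{N}_{n+1}$, the most direct route is to note that both null spaces are intrinsic to $\mu$. Indeed, $f \in \mathcal{N}_n$ means $\int_{\Gamma^*} f^2\, d\mu = 0$, i.e.\ $f = 0$ $\mu$-a.e. Since $\langle f, f\rangle_{K_{n+1}} = \int_{\Gamma^*} f^2\, d\mu$ for any $f \in \mathcal{M}_n \subseteq \mathcal{M}_{n+1}$, the same vanishing gives $f \in \mathcal{N}_{n+1}$. By the first description of the null space this is all that is needed; alternatively, the Cauchy--Schwarz description shows $\langle f, g\rangle_{K_{n+1}} = 0$ for all $g \in \mathcal{M}_{n+1}$. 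In this integral-kernel setting the lemma therefore holds even without the rank assumption.

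I would nevertheless also record the purely algebraic argument, since it is the one that parallels Lemma~\ref{Lem-1} and works for abstract Hankel kernels. In the proof of Lemma~\ref{Lem-1}, the rank equality forces the surjection $\sigma_1 \colon \mathcal{M}_n/(\mathcal{N}_{n+1}\cap\mathcal{M}_n) \to \mathcal{M}_n/\mathcal{N}_n$ to be a bijection between spaces of equal finite dimension. Injectivity of $\sigma_1$ says precisely that $f \in \mathcal{N}_n$ implies $f \in \mathcal{N}_{n+1} \cap \mathcal{M}_n$. The only step needing care is the dimension count, which is already done in Lemma~\ref{Lem-1}, so there is no genuine obstacle. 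I would present the measure-theoretic argument as the main proof and mention the $\sigma_1$-injectivity argument as the one that generalizes.
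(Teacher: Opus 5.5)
Your proof is correct, but for the nontrivial inclusion $\mathcal{N}_n\subseteq\mathcal{N}_{n+1}$ it takes a different route from the paper.

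\textbf{The paper's route.} It fixes $f\in\mathcal{N}_n$ and an arbitrary $g\in\mathcal{M}_{n+1}$. It then invokes Lemma~\ref{Lem-1}, and hence the rank hypothesis, to split $g=g_1+g_2$ with $g_1\in\mathcal{M}_n$ and $g_2\in\mathcal{N}_{n+1}$. Finally it checks that $\langle f,g\rangle_{K_{n+1}}=\langle f,g_1\rangle_{K_n}+\langle f,g_2\rangle_{K_{n+1}}=0$. In other words, it verifies the orthogonality description of $\mathcal{N}_{n+1}$.

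\textbf{Your route.} You observe that $\langle f,f\rangle_{K_{n+1}}=\langle f,f\rangle_{K_n}=0$ and use the quadratic description. This makes the lemma unconditional. That is worth recording, because it shows the flatness assumption enters the proof of Theorem~\ref{Thm-multinode-flat} only through Lemmas~\ref{Lem-1}, \ref{Lem-3} and \ref{Lem-4}.

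\textbf{On your closing comment.} Your direct argument does not really use $\mu$; the ``$\mu$-a.e.'' remark is superfluous. It uses only two facts: $K_n$ is the restriction of $K_{n+1}$ to $\mathcal{M}_n\times\mathcal{M}_n$, and $K_{n+1}$ is positive semi-definite. So it applies verbatim to abstract PSD truncated Hankel forms, with no rank hypothesis. What the paper's argument, and your $\sigma_1$-injectivity variant, buy instead is independence from positivity. Suppose the forms were merely symmetric and $\mathcal{N}_m$ meant the radical of $K_m$. Then $\mathcal{N}_n\subseteq\mathcal{N}_{n+1}$ can fail without the rank condition. For example, in one variable with moments $c_0=c_2=0$ and $c_1=1$, the radical of $K_0=(0)$ is all of $\mathcal{M}_0$, while $K_1$ is nondegenerate. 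In that setting the rank hypothesis is genuinely needed. Since the kernels here are PSD by construction, your argument is the simpler and sharper one.
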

	\begin{proof}
		The inclusion $\mathcal{N}_{n+1} \cap \mathcal{M}_n \subseteq \mathcal{N}_n$ is obvious from the definitions. For the other containment, we use the following description of the null space associated to $K_m$ on $\mathcal{M}_m$, for any $m \in \mathbb{N}$.
		$$
		\mathcal{N}_m = \{g \in \mathcal{M}_m: \langle g, h \rangle_{K_m} = 0, \ \forall h\in \mathcal{M}_m\}.
		$$
		Now, let $f\in \mathcal{N}_n$. Let $g \in \mathcal{M}_{n+1}$. Then by Lemma \ref{Lem-1}, there exist $g_1 \in \mathcal{M}_n$ and $g_2 \in \mathcal{N}_{n+1}$ such that $g=g_1 + g_2$. The above description of the null space associated with the kernel yields,
		\begin{align*}
			\langle f, g\rangle_{K_{n+1}} = \langle f, g_1\rangle_{K_{n+1}} + \langle f, g_2\rangle_{K_{n+1}} =0.
		\end{align*}
		Thus, $f\in  \mathcal{N}_{n+1} \cap \mathcal{M}_n$ and so, $\mathcal{N}_n \subseteq  \mathcal{N}_{n+1} \cap \mathcal{M}_n.$
	\end{proof}

	\begin{lemma} \label{Lem-3}
		Suppose $1\leq j \leq d$ and $f\in \mathcal{N}_{n+1}$. If $x_j f \in \mathcal{M}_{n+1}$, then $x_j f \in \mathcal{N}_{n+1}$.
	\end{lemma}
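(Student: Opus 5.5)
The plan is to test membership in $\mathcal{N}_{n+1}$ through the $L^2(\Gamma^*,\mu)$ description of the null space. By the GNS construction, an element $f\in\mathcal{M}_{n+1}$ lies in $\mathcal{N}_{n+1}$ if and only if
$$\langle f,f\rangle_{K_{n+1}}=\int_{\Gamma^*} f(x)^2\,d\mu(x)=0,$$
that is, if and only if $f=0$ $\mu$-almost everywhere on $\Gamma^*$. So, given $f\in\mathcal{N}_{n+1}$, we know $f$ vanishes $\mu$-a.e.

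The argument then takes one step. The function $x_j f(x)$ is the product of $f$ with the continuous function $x_j$, so it also vanishes $\mu$-a.e. on $\Gamma^*$, and hence
$$\int_{\Gamma^*} (x_j f(x))^2\,d\mu(x)=0.$$
Since $x_j f\in\mathcal{M}_{n+1}$ by hypothesis, the quantity $\langle x_j f,x_j f\rangle_{K_{n+1}}$ is defined and equals this integral. Therefore $x_j f\in\mathcal{N}_{n+1}$.

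The only point needing care is integrability, since $\mu$ has only moderate growth. Every element of $\mathcal{M}_{n+1}$ is a finite combination of $x^\alpha e^{-p^{(j)}\cdot x}$ with $p^{(j)}\in\operatorname{int}\Gamma$. Squares of these are of the form $x^{2\alpha}e^{-2p^{(j)}\cdot x}$, which are bounded by a constant times $e^{-p^{(j)}\cdot x}$ on $\Gamma^*$. This bound holds because $p^{(j)}\cdot x\ge c|x|$ on $\Gamma^*$ for an interior point $p^{(j)}$ of the dual cone. So all the integrals above are finite and the kernel identification is legitimate.

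Notice that the rank hypothesis $\operatorname{rank}K_n=\operatorname{rank}K_{n+1}$ is not actually needed for this lemma; it relies only on the realization of $K_{n+1}$ as an $L^2(\mu)$ inner product. I do not expect a genuine obstacle. The only subtle step is the a.e.-vanishing characterization, which follows directly from the definition of $K_{n+1}$ as integration against $\mu$.
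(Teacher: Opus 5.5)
Your proof is correct, but it takes a different route from the paper's.

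\textbf{The paper's route.} The paper runs the algebraic ``recursive generation'' argument from flat-extension theory. By Lemma~\ref{Lem-1} it writes $x_j f = h_1 + h_2$ with $h_1\in\mathcal{M}_n$ and $h_2\in\mathcal{N}_{n+1}$. For every $g\in\mathcal{M}_n$ it has $\langle x_j f, g\rangle_{K_{n+1}} = \langle f, x_j g\rangle_{K_{n+1}} = 0$, because $f\in\mathcal{N}_{n+1}$ and $x_j g\in\mathcal{M}_{n+1}$. This gives $\langle h_1, g\rangle_{K_n}=0$, so $h_1\in\mathcal{N}_n$. Lemma~\ref{Lem-2} then places $h_1$ in $\mathcal{N}_{n+1}$, hence $x_j f = h_1+h_2\in\mathcal{N}_{n+1}$. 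Both auxiliary lemmas depend on the hypothesis $\operatorname{rank}K_n=\operatorname{rank}K_{n+1}$.

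\textbf{Your route.} You use instead that $K_{n+1}$ is literally the $L^2(\Gamma^*,\mu)$ form. So $\mathcal{N}_{n+1}$ is the set of elements of $\mathcal{M}_{n+1}$ vanishing $\mu$-a.e., and that set is obviously stable under multiplication by $x_j$ whenever the product stays in $\mathcal{M}_{n+1}$. This is shorter and, as you correctly observe, needs no rank hypothesis at all. Your integrability check ($p\cdot x\ge c|x|$ on $\Gamma^*$ for $p\in\operatorname{int}\Gamma$, combined with moderate growth) is exactly what makes the kernel well defined.

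\textbf{What each buys.} The paper's argument uses only positivity and the shift-symmetric (Hankel) structure of the kernel. It therefore applies verbatim to an abstract positive semidefinite Hankel kernel not known in advance to come from a measure, which is the standard setting of flat-extension theorems. Your argument depends essentially on having the representing measure $\mu$ in hand. Since the theorem here starts from a given $\mu$, your shortcut is fully legitimate in this context.
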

	\begin{proof}
		Let $f$ and $j$ be as in the statement of the lemma. By Lemma \ref{Lem-1}, $x_jf = h_1 + h_2$ for some $h_1 \in \mathcal{M}_n$ and $h_2 \in \mathcal{N}_{n+1}$. For any $g\in \mathcal{M}_n$, $x_j g \in \mathcal{M}_{n+1}$ and so, $\langle f, x_j g\rangle_{K_{n+1}} = 0$ as $f\in \mathcal{N}_{n+1}$. Again, it is easy to verify that
		\begin{align*}
			\langle x_j f, g\rangle_{K_{n+1}} = \langle f, x_j g\rangle_{K_{n+1}} = 0.
		\end{align*}
		So,
		\begin{align*}
			0= \langle h_1 + h_2, g\rangle_{K_{n+1}} = \langle h_1, g\rangle_{K_{n}} + \langle h_2, g\rangle_{K_{n+1}} = \langle h_1, g\rangle_{K_{n}}.
		\end{align*}
		Therefore, $h_1 \in \mathcal{N}_n$ and finally, using Lemma \ref{Lem-2}, we have $x_j f \in \mathcal{N}_{n+1}$.
	\end{proof}
	
	\begin{lemma} \label{Lem-4}
		$\mathcal{M}_{n+2} = \mathcal{M}_n + \mathcal{N}_{n+2}$.
	\end{lemma}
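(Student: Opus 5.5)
The inclusion $\mathcal{M}_n + \mathcal{N}_{n+2} \subseteq \mathcal{M}_{n+2}$ is trivial, so I only need to show that every element of $\mathcal{M}_{n+2}$ can be written as an element of $\mathcal{M}_n$ plus an element of $\mathcal{N}_{n+2}$. Since $\mathcal{M}_{n+2}$ is spanned by the functions $x^\gamma e^{-p^{(j)}\cdot x}$ with $|\gamma| \leq n+2$, it suffices to treat basis elements with $|\gamma| = n+2$. Each such $\gamma$ factors as $\gamma = \beta + e_i$ with $|\beta| = n+1$, so the basis element equals $x_i h$ with $h = x^\beta e^{-p^{(j)}\cdot x} \in \mathcal{M}_{n+1}$.

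By Lemma~\ref{Lem-1}, I write $h = g + \nu$ with $g \in \mathcal{M}_n$ and $\nu \in \mathcal{N}_{n+1}$. Then $x_i h = x_i g + x_i \nu$, and $x_i g \in \mathcal{M}_{n+1}$. Applying Lemma~\ref{Lem-1} again gives $x_i g = g' + \nu'$ with $g' \in \mathcal{M}_n$ and $\nu' \in \mathcal{N}_{n+1}$. Note that $x_i \nu = x_i h - x_i g$ lies in $\mathcal{M}_{n+2}$. The task therefore reduces to the following two claims:
\begin{itemize}
\item[(a)] $\mathcal{N}_{n+1} \subseteq \mathcal{N}_{n+2}$;
\item[(b)] if $\nu \in \mathcal{N}_{n+1}$, then $x_i \nu \in \mathcal{N}_{n+2}$.
\end{itemize}
Granting these, $x_i h = g' + (\nu' + x_i\nu) \in \mathcal{M}_n + \mathcal{N}_{n+2}$.

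The key observation for both claims is that all the forms $K_m$ are restrictions of the single $L^2(\mu)$ inner product, and $f \in \mathcal{N}_m$ means exactly that $\int f^2\, d\mu = 0$, i.e. $f = 0$ $\mu$-a.e. This condition does not depend on $m$. Hence $\mathcal{N}_m = \mathcal{M}_m \cap \{f : f = 0 \ \mu\text{-a.e.}\}$, which gives (a) immediately. It also gives (b): if $\nu = 0$ $\mu$-a.e., then $x_i\nu = 0$ $\mu$-a.e., and $x_i\nu \in \mathcal{M}_{n+2}$, so $x_i \nu \in \mathcal{N}_{n+2}$. This same argument would even make Lemma~\ref{Lem-3} and Lemma~\ref{Lem-2} transparent, so I would state the a.e. characterization explicitly first.

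The step I expect to be the only real subtlety is making sure I am not supposed to stay within the purely algebraic framework the preceding lemmas use, namely reasoning only with $\langle\cdot,\cdot\rangle_{K_m}$ and the inclusions. If a purely algebraic proof is intended, I would argue (b) via Lemma~\ref{Lem-3}-type reasoning at level $n+2$. For $\nu \in \mathcal{N}_{n+1}$ and any $g \in \mathcal{M}_{n+2}$, I would compute $\langle x_i\nu, g\rangle_{K_{n+2}} = \int \nu\, x_i g \, d\mu$, and then use Cauchy--Schwarz in $L^2(\mu)$: $|\int \nu\, x_i g\, d\mu| \leq \|\nu\|_{L^2(\mu)} \|x_i g\|_{L^2(\mu)} = 0$. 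The norm $\|x_i g\|$ is finite by moderate growth, since $x^\alpha e^{-2p\cdot x}$ is integrable for $p \in \operatorname{int}\Gamma$. This Cauchy--Schwarz step is the crux, and it is exactly where positivity of $\mu$, rather than mere positive semi-definiteness of the truncated kernels, is used.
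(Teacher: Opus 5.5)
Your proof is correct and follows the paper's route. Both split off the top-degree part as $\sum_r x_r h_r$, apply Lemma~\ref{Lem-1} to each $h_r$ and again to the $x_r u_r$ pieces, and show that $x_r v_r\in\mathcal{N}_{n+2}$ whenever $v_r\in\mathcal{N}_{n+1}$. The only difference is in that last step: the paper uses the Cauchy--Schwarz bound $\langle x_r v_r, x_r v_r\rangle_{K_{n+2}}^2\le \langle x_r^2 v_r, x_r^2 v_r\rangle_{K_{n+3}}\langle v_r,v_r\rangle_{K_{n+1}}$, which is essentially your fallback, whereas your observation that $\mathcal{N}_m$ consists of the elements of $\mathcal{M}_m$ vanishing $\mu$-a.e. gives both that step and $\mathcal{N}_{n+1}\subseteq\mathcal{N}_{n+2}$ immediately.
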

	\begin{proof}
		Suppose, $f = \sum_{j=1}^N \sum_{|\alpha|\leq n+2} c_{j \alpha} \ x^\alpha e^{-p^{(j)} \cdot x} \in \mathcal{M}_{n+2}$. We can write $f= g_1 + g_2$,
		where 
		$$
		g_1 = \sum_{j=1}^N \sum_{|\alpha|\leq n+1} c_{j \alpha} \ x^\alpha e^{-p^{(j)} \cdot x} \in \mathcal{M}_{n+1} \text{ and }
		g_2= \sum_{j=1}^N \sum_{|\alpha|= n+2} c_{j \alpha} \ x^\alpha e^{-p^{(j)} \cdot x}.
		$$
		Also, $g_2$ can be expressed as $g_2 = \sum_{r=1}^d x_r h_r$, for some $h_r \in \mathcal{M}_{n+1}$. For each $1\leq r \leq d$, by Lemma \ref{Lem-1}, $h_r = u_r + v_r$ for some $u_r \in \mathcal{M}_n$ and $v_r \in \mathcal{N}_{n+1}$. So, $g_2 = \sum_{r=1}^d x_r u_r + x_r v_r$. 
		For each $1\leq r\leq d$,
		\begin{align*}
			|\langle x_r v_r, x_r v_r \rangle_{K_{n+2}}|^2 = |\langle x_r^2 v_r, v_r \rangle_{K_{n+3}}|^2 \leq \langle x_r^2 v_r, x_r^2 v_r \rangle_{K_{n+3}} \langle v_r, v_r \rangle_{K_{n+1}} =0,
		\end{align*}
		since $v_r \in \mathcal{N}_{n+1}$. Thus, $x_r v_r \in \mathcal{N}_{n+2}$ and so, $g_2 \in \mathcal{M}_{n+1} + \mathcal{N}_{n+2}$. Using Lemma \ref{Lem-1} and the fact that $\mathcal{N}_{n+1} \subseteq \mathcal{N}_{n+2}$, we finally have $f=g_1 + g_2 \in \mathcal{M}_n + \mathcal{N}_{n+2}$. Therefore, $\mathcal{M}_{n+2} \subseteq \mathcal{M}_n + \mathcal{N}_{n+2}$. The reverse containment is obvious.
	\end{proof}
	\begin{remark}
		The above lemma also illustrates that $\operatorname{rank} K_{n}= \operatorname{rank} K_{n+1}$ implies that $\operatorname{rank} K_{n+r} =\operatorname{rank} K_n$ for every $r\in \mathbb{N}$.
	\end{remark}
	
	\vspace{3mm}
	So far We have $\mathcal{M}_{n+1} = \mathcal{M}_n + \mathcal{N}_{n+1}$. But the null space $\mathcal{N}_{n+1}$ describes those functions in $\mathcal{M}_{n+1}$ which are equivalent to the zero function in $L^2_\mathbb{R}(\Gamma^*, \mu)$. Therefore, $[\mathcal{M}_{n+1}] = [\mathcal{M}_n]$ viewed as a subspace of the real inner product space $L^2_\mathbb{R}(\Gamma^*, \mu)$. To be more precise, $\mathcal{H}_n = \mathcal{H}_{n+1}$ in $L^2_\mathbb{R}(\Gamma^*, \mu)$. Similarly, Lemma \ref{Lem-4} implies that $\mathcal{H}_{n} = \mathcal{H}_{n+2}$ in $L^2_\mathbb{R}(\Gamma^*, \mu)$. Thus, $\mathcal{H}_{n} = \mathcal{H}_{n+k}$ for every $k\geq 1$. Therefore,
	\begin{align*}
		\mathcal{H} \bydef \cup_{j=1}^\infty \mathcal{H}_j = \mathcal{H}_n.
	\end{align*}
	Now, for $1\leq j \leq d$, we consider the multiplication operators $X_j$ by the coordinate functions $x_j$ on $L^2_\mathbb{R}(\Gamma^*, \mu)$.
	Also, the space $\mathcal{H}$ is invariant under the action of $X_j$ for $1\leq j \leq d$. Hence, $\mathcal{H}_n$ is invariant under the action of $X_j$ for every $1\leq j \leq d$. 
	
	Let $\mathbf{X}=(X_1, \dots, X_d)$ be the $d$-tuple of commuting real symmetric linear operators on the finite dimensional space $\mathcal{H}_n$ to prove Theorem \ref{Thm-multinode-flat}.
	
	\vspace{2mm}
	
	\begin{proof}[\textbf{Proof of Theorem~\ref{Thm-multinode-flat}}]
		
		If the measure $\mu$ has finite support, then the space $L^2(\Gamma^*, \mu)$ has finite dimension. Moreover, the class $[x^\alpha e^{-p.x}]$ is non-zero for any multi-index $\alpha$
		and point $p \in \operatorname{int} \Gamma$. Further, these exponential polynomials with $p \in S$ separate all real points. In other terms, exponential polynomials are dense in $L^2(\Gamma^\ast, \mu)$.
	Therefore there exists a positive integer $n$ with the property $\mathcal{H} = \mathcal{H}_n.$
          
		For the converse implication, we assume as in the statement of Theorem~\ref{Thm-multinode-flat}, via the above lemmas, that $\mathcal{H}_n = \mathcal{H}_{n+1}.$
		
		Since $\mathbf{X}=(X_1, \dots, X_d)$ is a $d$-tuple of commuting real symmetric linear operators on $\mathcal{H}_n$, the joint spectrum $\sigma(\mathbf{X}) \subseteq \mathbb{R}^d$ and there is a unique spectral measure $E$ on $\sigma(\mathbf{X})$ such that
		$$
		X_j = \int_{\sigma(\mathbf{X})} x_j dE(x) \ \ \text{ for } 1\leq j \leq d.
		$$
		In this case $\sigma(\mathbf{X})$ is a finite set consisting of the joint eigenvalues of $\mathbf{X}$. Let $\sigma(\mathbf{X})= \{\lambda^{(j)} \in \mathbb{R}^d: 1\leq j \leq r \}$ be the collection of the distinct joint eigenvalues of $\mathbf{X}$.

		Consider the polynomial $R(x) = \prod_{j=1}^r \|x-\lambda^{(j)}\|^2$. Since $\mathbf{X}$ is a commuting $d$-tuple of operators, we can define without ambiguity $R(\mathbf{X})$ and observe that $R(\mathbf{X}) = 0$. Therefore,
		\begin{align*}
			\int_{\Gamma^*} R(x) e^{-p^{(1)} \cdot x} e^{-p^{(2)} \cdot x} d\mu(x) = \langle R(\mathbf{X}) e^{-p^{(1)} \cdot x}, e^{-p^{(2)} \cdot x} \rangle_{\mathcal{H}_n} =0.
		\end{align*}
		
		Note that, $e^{-p^{(1)} \cdot x} e^{-p^{(2)} \cdot x}$ is a positive continuous function on $\Gamma^*$. Thus, the support of the measure $\mu$ is contained in the zero set of the polynomial $R(x)$. But from the expression of $R$ it is clear that the zero set is $\sigma(\mathbf{X})$.
		Hence, $\mu$ is finitely supported measure on $\Gamma^*$ and the support is contained in $\sigma(\mathbf{X})$. 
		
	\end{proof}
	
	\begin{remark}
		We connect the joint spectral measure $E$ associated with the commuting tuple $\mathbf{X}$ in the following way. For any polynomial $h(x)$, there exists $m \in \mathbb{N}$ depending on the degree of $h(x)$ such that
		\begin{align*}
			\int_{\Gamma^*} h(x) e^{-2p^{(1)}\cdot x} d\mu &=\langle h(x) e^{-p^{(1)}\cdot x}, e^{-p^{(1)}\cdot x} \rangle_{K_m}\\
			&= \langle \left(\int_{\Gamma^*} h(x) dE(x) \right) e^{-p^{(1)}\cdot x}, e^{-p^{(1)}\cdot x}  \rangle \\
			&= \int_{\sigma(\mathbf{X})} h(x) dE_{e^{-p^{(1)}\cdot x}, e^{-p^{(1)}\cdot x}}(x).
		\end{align*}
		Also, we know that $\mu$ is finitely supported and so, $d\mu = e^{2p^{(1)} \cdot x} dE_{e^{-p^{(1)}\cdot x}, e^{-p^{(1)}\cdot x}}$. Relying on a different point, say $p^{(2)} \in \Gamma$, one finds
		$d\mu = e^{2p^{(2)} \cdot x} dE_{e^{-p^{(2)}\cdot x}, e^{-p^{(2)}\cdot x}}$.
		This argument is well known in the Paley-Wiener theory. See for instance formula (2.13) on page 101 of \cite{Stein-Weiss}. 
	\end{remark}

	\subsection{Hankel kernels associated to Stieltjes-Fantappi\`e transforms} \label{Subsec:Flat-Fantappie}
	In this subsection, we establish a version of the flat extension theorem for Stieltjes-Fantappi{\`e} transforms, analogous to the result obtained for Laplace transforms in the previous subsection. {\em Instead of considering multiple jets at finitely many nodes, we focus here on the Taylor coefficients at a single node}.  
	
	Let $\mu$ be a finite positive Borel measure supported on $\Gamma^*$, and $\Phi(p,p_0)$ be the Stieltjes-Fantappi{\`e} transform of $\mu$ as given in \eqref{Eq:FT}, where $(p,p_0)\in \operatorname{int} \Gamma \times \mathbb{R}_+$. Fix a point $\lambda \in \operatorname{int}\Gamma$. The derivatives of $\Phi$ at $(\lambda, 1)$ are
    $$
    \partial_{z_0}^j \partial_{z}^\alpha \Phi |_{(z, z_0)=(\lambda, 1)} = (-1)^{|\alpha|+j}(|\alpha|+j)! \int_{\Gamma^*}\frac{x^\alpha \ d\mu(x)}{\ltt1+\lambda\cdot x\rtt^{|\alpha|+j+1}}.
    $$
   Recall that $\Phi$ has a holomorphic extension in the tube domain $T_{\Gamma \times \mathbb{R}_+}$. So, it has a power series expansion in a neighbourhood of the point $(\lambda, 1)$.
   
	Let us consider the following $\mathbb{R}$-linear subspaces of $C_\mathbb{R}(\Gamma^*)$, the algebra of real-valued continuous function on $\Gamma^*$. For $n\in \N_0$, we define
	\begin{align*}
		\mathcal M_n=
		\operatorname{span}_\R\left\{\frac{x^\alpha}{\ltt 1+\lambda\cdot x\rtt^{|\alpha|+j+1/2}}: (\alpha, j)\in \mathbb{N}_0^d \times \mathbb{N}_0 \text{ with } |\alpha|+j\leq n\right\}.
	\end{align*}

	Define a positive semi-definite kernel $K_n$ on $\mathcal{M}_n \times \mathcal{M}_n$ as follows:
	\begin{align*}
		K_n(f, g) = \int_{\Gamma^*} f(x) g(x) d\mu(x),\quad \text{for } f, g\in \mathcal M_{n}.
	\end{align*}
	Let $d_n'$ denote the dimension of the space $\mathcal M_n$. The Hankel matrix $\Xi_{n}$ induced by $K_n$ on $\mathcal M_n$ is a symmetric matrix of order $d_n'$
	given by 
	\[
	\left[\Xi_{n}\right]_{\alpha j, \beta k}=\int_{\Gamma^*} \frac{x^{\alpha+\beta}}{\ltt 1+\lambda\cdot x\rtt^{|\alpha|+|\beta|+j+k+1}}d\mu(x).
	\]
	{\em The rank of the matrix $\Xi_n$ (according to a fixed ordered basis of $\mathcal{M}_n$) is called  the rank of the positive semi-definite kernel $K_n$}. Again, as in the previous subsection we have a GNS construction related to the kernel $K_n$ having the null space 
	\begin{align*}
		\mathcal{N}_n = \{f \in \mathcal{M}_n : \langle f, f\rangle_{K_n} = 0 \}.
	\end{align*}

    The following theorem is a version of the flat extension theorem in this setup.
	\begin{theorem}\label{th:flat}
		Let $\mu$ be a finite positive measure on $\Gamma^*$. Then, the measure $\mu$ is finitely supported in $\Gamma^*$ if and only if there exists $n\in\N$ such that $\operatorname{rank} K_n=\rnk K_{n+1}$.
	\end{theorem}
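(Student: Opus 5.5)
The plan is to mirror the proof of Theorem~\ref{Thm-multinode-flat}, after a change of variables that turns the generators of $\mathcal M_n$ into weighted polynomials. Put $w(x)=(1+\lambda\cdot x)^{-1}$, which is positive, bounded by $1$ on $\Gamma^*$ and never vanishes there. Set $y_j(x)=x_j w(x)$ for $1\le j\le d$ and $y_0(x)=w(x)$. Each generator of $\mathcal M_n$ then has the form
$$\frac{x^\alpha}{(1+\lambda\cdot x)^{|\alpha|+j+1/2}} = w^{1/2}\, y^\alpha y_0^{j}, \qquad |\alpha|+j\le n .$$
So $\mathcal M_n = w^{1/2}\cdot \mathcal P_n(y_0,y_1,\dots,y_d)$, where $\mathcal P_n$ denotes the polynomials of degree at most $n$ in $y_0,\dots,y_d$. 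Moreover $y_0+\lambda\cdot y = 1$. Hence multiplication by $y_0$ or by any $y_j$ sends $\mathcal M_n$ into $\mathcal M_{n+1}$, and these multiplications are symmetric with respect to $K_{n+1}$, since $K_{n+1}(y_j f,g)=K_{n+1}(f,y_j g)$ whenever both sides are defined.

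\emph{Flat extension implies finite support.} Assume $\operatorname{rank} K_n=\operatorname{rank} K_{n+1}$. Lemmas~\ref{Lem-1}--\ref{Lem-4} go through verbatim, with $x_j$ replaced by the multipliers $y_0,y_1,\dots,y_d$. Their proofs only use three facts: the grading $\mathcal M_n\subset\mathcal M_{n+1}$, the identity $\mathcal M_{n+1}=\mathcal M_n+\sum_r y_r\mathcal M_n$, and the symmetry of multiplication together with Cauchy--Schwarz. This gives $\mathcal H_n=\mathcal H_{n+k}$ inside $L^2(\Gamma^*,\mu)$ for all $k$. The operators $Y_j$ of multiplication by $y_j$ are bounded on $L^2(\mu)$, because $|y_j|\le C$ on $\Gamma^*$: indeed $\lambda\in\operatorname{int}\Gamma$ gives $\lambda\cdot x\ge c|x|$ for $x\in\Gamma^*$. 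They leave the finite-dimensional space $\mathcal H_n$ invariant, commute, and are symmetric. Let $\{\eta^{(1)},\dots,\eta^{(r)}\}\subset\R^{d+1}$ be their finite joint spectrum and set $R(y)=\prod_k\|y-\eta^{(k)}\|^2$, so that $R(\mathbf Y)=0$ on $\mathcal H_n$. Taking the cyclic vector $u=w^{1/2}\in\mathcal M_0$ gives
$$0=\langle R(\mathbf Y)u,u\rangle=\int_{\Gamma^*}R(y(x))\,w(x)\,d\mu(x).$$
Since $w>0$ and $R\ge 0$, $\mu$ is carried by the set $\{x: y(x)\in\{\eta^{(k)}\}\}$. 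The map $x\mapsto y(x)=(x,1)/(1+\lambda\cdot x)$ is injective on $\Gamma^*$, with inverse $x=(y_1,\dots,y_d)/y_0$. Therefore this set is finite and $\mu$ is finitely supported.

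\emph{Finite support implies flat extension.} If $\mu$ is finitely supported, then $L^2(\mu)$ is finite dimensional. The spaces $\mathcal H_n$ increase inside it, so the ranks are nondecreasing and bounded and must stabilize at some $n$.

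\emph{Main obstacle.} The delicate step is the analogue of Lemma~\ref{Lem-4}: checking that the multiplication structure is compatible with the grading. The Cauchy--Schwarz step $\langle y_r v, y_r v\rangle_{K_{n+2}}=\langle y_r^2 v,v\rangle_{K_{n+3}}$ needs $y_r^2\mathcal M_{n+1}\subset\mathcal M_{n+3}$, which the representation $\mathcal M_n=w^{1/2}\mathcal P_n(y)$ provides. The redundancy caused by the relation $y_0+\lambda\cdot y=1$ is harmless, because the argument only uses spans and never linear independence of the generators.
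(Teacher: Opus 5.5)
Your proof is correct and follows the paper's route. The flat-extension lemmas give a finite-dimensional space invariant under the commuting multipliers by $x_j/(1+\lambda\cdot x)$; a nonnegative polynomial vanishing on their joint spectrum, tested against the strictly positive vector $(1+\lambda\cdot x)^{-1/2}$, then forces $\mu$ to be finitely supported. Your write-up is in fact more careful than the paper's in two places: you pull the zero set back through the injective map $x\mapsto y(x)$, where the paper writes $R(x)$ for what should be $R(y(x))$, and you spell out the converse direction via monotonicity and boundedness of the ranks.
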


	The proof is similar to what we have done in Subsection \ref{Subsec:flat-Laplace}. With appropriate adjustments and modifications of the lemmas there, we record the following without proof.
	\begin{proposition}\label{Prop:Flat}
    Suppose $\operatorname{rank} K_n=\rnk K_{n+1}$. Then,
		\begin{itemize}
			\item[(i)] $\mathcal{M}_{n+1} = \mathcal{M}_n + \mathcal{N}_{n+1};$
			
			\vspace{2mm}
			\item[(ii)] $\mathcal{N}_n = \mathcal{N}_{n+1} \cap \mathcal{M}_n;$
			
			\vspace{2mm}
			\item[(iii)] If for any $1\leq j \leq d$, $f\in \mathcal{N}_{n+1}$ and $\frac{x_j}{1+ \lambda \cdot x} f \in \mathcal{M}_{n+1}$, then $\frac{x_j}{1+ \lambda \cdot x} f \in \mathcal{N}_{n+1}$;
			
			\vspace{2mm}
			\item[(iv)] $\mathcal{M}_{n+2} = \mathcal{M}_n + \mathcal{N}_{n+2}$.
			
		\end{itemize}
	\end{proposition}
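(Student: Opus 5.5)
The plan is to follow the template of Lemmas~\ref{Lem-1}--\ref{Lem-4} step by step. The only change is that multiplication by the coordinate $x_j$ is replaced by multiplication by the bounded weights
\[
w_0(x)=\frac{1}{1+\lambda\cdot x},\qquad w_r(x)=\frac{x_r}{1+\lambda\cdot x},\quad 1\leq r\leq d .
\]
Two elementary facts will be recorded first.

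\textbf{(a) Boundedness.} Since $\lambda\in\operatorname{int}\Gamma$ and $\Gamma^*$ is the dual cone, there is $c>0$ with $\lambda\cdot x\geq c\|x\|$ on $\Gamma^*$. Hence $|w_r|\leq \max(1,1/c)=:C$ on $\Gamma^*$ for $0\leq r\leq d$.

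\textbf{(b) Degree shift.} If $e_{\alpha,j}=x^\alpha(1+\lambda\cdot x)^{-(|\alpha|+j+1/2)}$ is a basis element of $\mathcal M_n$, then $w_r e_{\alpha,j}=e_{\alpha+\varepsilon_r,j}$ and $w_0e_{\alpha,j}=e_{\alpha,j+1}$, where $\varepsilon_r$ is the $r$-th unit multi-index. Consequently $w_r\mathcal M_n\subseteq\mathcal M_{n+1}$. Moreover every basis element of $\mathcal M_{n+2}$ of top degree $|\alpha|+j=n+2$ has the form $w_r e$ for some $e\in\mathcal M_{n+1}$: take $r$ with $\alpha_r\geq1$ if $\alpha\neq0$, and $r=0$ if $\alpha=0$ (then $j\geq1$).

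Items (i) and (ii) are pure linear algebra. The only input is that $\mathcal M_n\subseteq \mathcal M_{n+1}$ and that $K_{n+1}$ restricts to $K_n$, since both are $\int fg\,d\mu$. So the proofs of Lemmas~\ref{Lem-1} and~\ref{Lem-2} transfer verbatim: compare the surjection $\mathcal M_n/(\mathcal N_{n+1}\cap\mathcal M_n)\to\mathcal M_n/\mathcal N_n$ with the injection into $\mathcal M_{n+1}/\mathcal N_{n+1}$, and use the rank equality to force both to be bijections. For (ii), use the characterisation of $\mathcal N_m$ as the radical of $K_m$ together with (i).

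For (iii), I would copy the proof of Lemma~\ref{Lem-3}. Take $f\in\mathcal N_{n+1}$ with $w_jf\in\mathcal M_{n+1}$, and write $w_jf=h_1+h_2$ with $h_1\in\mathcal M_n$ and $h_2\in\mathcal N_{n+1}$ by (i). For $g\in\mathcal M_n$ we have $w_jg\in\mathcal M_{n+1}$ by (b), and the symmetry $\int (w_jf)g\,d\mu=\int f(w_jg)\,d\mu$ gives $\langle w_jf,g\rangle_{K_{n+1}}=0$. Therefore $\langle h_1,g\rangle_{K_n}=0$ for all $g\in\mathcal M_n$, so $h_1\in\mathcal N_n\subseteq\mathcal N_{n+1}$ by (ii), and hence $w_jf\in\mathcal N_{n+1}$. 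In fact (a) gives a more direct argument: $\int (w_jf)^2d\mu\leq C^2\int f^2d\mu=0$.

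For (iv), take $f\in\mathcal M_{n+2}$. Using (b), split it as $f=g_1+\sum_{r=0}^d w_rh_r$ with $g_1,h_r\in\mathcal M_{n+1}$. By (i), write $h_r=u_r+v_r$ with $u_r\in\mathcal M_n$ and $v_r\in\mathcal N_{n+1}$. Then $w_ru_r\in\mathcal M_{n+1}=\mathcal M_n+\mathcal N_{n+1}$. The bound (a) gives $\int (w_rv_r)^2d\mu\leq C^2\int v_r^2\,d\mu=0$, so $w_rv_r\in\mathcal N_{n+2}$. Together with $g_1\in\mathcal M_n+\mathcal N_{n+1}$ and $\mathcal N_{n+1}\subseteq\mathcal N_{n+2}$, this gives $f\in\mathcal M_n+\mathcal N_{n+2}$; the reverse inclusion is trivial.

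The only step I expect to need care is the decomposition in (b). The spanning set of $\mathcal M_n$ is linearly dependent, because of the identity $1=w_0+\sum_r\lambda_rw_r$, and one must check that the splitting into lower-degree and top-degree parts is legitimate. It is: we only need a representation of $f$ in that form, not uniqueness. The boundedness in (a) actually makes this step easier than the Cauchy--Schwarz trick used in Lemma~\ref{Lem-4}.
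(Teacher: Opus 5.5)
Your proposal is correct and takes the route the paper intends: the paper states this proposition without proof and refers to adapting Lemmas~\ref{Lem-1}--\ref{Lem-4}, with multiplication by $x_j$ replaced by the weights $x_j/(1+\lambda\cdot x)$ and $1/(1+\lambda\cdot x)$. Your one departure is to show $w_r v_r\in\mathcal{N}_{n+2}$ using the bound $|w_r|\leq C$ on $\Gamma^*$, instead of the Cauchy--Schwarz step through $K_{n+3}$ in Lemma~\ref{Lem-4}; this is valid and slightly cleaner. In fact the bound is not even needed, since elements of $\mathcal{N}_m$ vanish $\mu$-a.e. and so stay null after multiplication by any function.
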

	
	\vspace{3mm}
	Note that, the null space $\mathcal{N}_{n+1}$ describes those functions in $\mathcal{M}_{n+1}$ which are equivalent to the zero function in $L^2_\mathbb{R}(\Gamma^*, \mu)$. Therefore, from part (i) of Proposition \ref{Prop:Flat}, $[\mathcal{M}_{n+1}] = [\mathcal{M}_n]$ viewed as a subspace of the real inner product space $L^2_\mathbb{R}(\Gamma^*, \mu)$. To be more precise, if for each $k$, $\mathcal{H}_k$ denotes the quotient space $\mathcal{M}_k / \mathcal{N}_k$, then we have $\mathcal{H}_n = \mathcal{H}_{n+1}$ in $L^2_\mathbb{R}(\Gamma^*, \mu)$. Similarly, the part (iv) of Proposition \ref{Prop:Flat}, means that $\mathcal{H}_{n} = \mathcal{H}_{n+2}$ in $L^2_\mathbb{R}(\Gamma^*, \mu)$. Thus, $\mathcal{H}_{n} = \mathcal{H}_{n+k}$ for every $k\geq 1$. Therefore,
	\begin{align*}
		\mathcal{H} \bydef \cup_{j=1}^\infty \mathcal{H}_j = \mathcal{H}_n.
	\end{align*}
	Now, for $1\leq j \leq d$, we consider the multiplication operator $Y_j$ corresponding to the function $\frac{x_j}{1+\lambda \cdot x}$ on $L^2_\mathbb{R}(\Gamma^*, \mu)$.
	Also, the space $\mathcal{H}$ is invariant under the action of $Y_j$ for $1\leq j \leq d$. Hence, $\mathcal{H}_n$ is invariant under the action of $Y_j$ for every $1\leq j \leq d$. 
	
	We now utilize the $d$-tuple, $\mathbf{Y}=(Y_1, \dots, Y_d)$ of commuting real symmetric linear operators on the finite dimensional space $\mathcal{H}_n$ to prove Theorem \ref{th:flat} as follows.
	
	\begin{proof}[\textbf{Proof of Theorem~\ref{th:flat}}]
		We begin with the joint spectrum $\sigma(\mathbf{Y}) \subseteq \mathbb{R}^d$ and the unique spectral measure $E$ on $\sigma(\mathbf{Y})$ associated with the $d$-tuple $\mathbf{Y}$ such that
		$$
		Y_j = \int_{\sigma(\mathbf{Y})} x_j dE(x) \ \ \text{ for } 1\leq j \leq d.
		$$
		In this case, $\sigma(\mathbf{Y})$ is a finite set consisting the joint eigenvalues of $\mathbf{Y}$. Let $\sigma(\mathbf{Y})= \{\gamma^{(j)}: 1\leq j \leq r \}$ be the collection of the distinct joint eigenvalues of $\mathbf{Y}$.
		
		Consider the polynomial $R(x) = \prod_{j=1}^r \|x-\gamma^{(j)}\|^2$. Since $\mathbf{Y}$ is a commuting $d$-tuple of operators, we can define $R(\mathbf{Y})$ and $Y_j$'s being operators on a finite dimensional space, we observe that $R(\mathbf{Y}) = 0$ on $\mathcal{H}_n$. 
		
		Observe that, the function $e_0(x)= \frac{1}{\sqrt{1+ \lambda \cdot x}}$ is in $\mathcal{M}_n$. If $e_0 \in \mathcal{N}_n$, then $\int_{\Gamma^*} \frac{d\mu(x)}{1+\lambda \cdot x} =0$ implying that $\mu$ is the zero measure on $\Gamma^*$. Thus, the theorem becomes trivial in this case. So, we assume $e_0$ is a non-trivial function in $\mathcal{H}_n$. Therefore,
		\begin{align*}
			\int_{\Gamma^*} R(x) e_0(x)^2 d\mu= \langle R(\mathbf{Y}) e_0, e_0\rangle_{K_{n+1}} =0.
		\end{align*}
		
		Note that, $R(x)$ is a non-negative polynomial and $e_0$ is strictly positive on $\Gamma^*$. Thus, the support of the measure $\mu$ is contained in the zero set of the polynomial $R(x)$. But from the expression of $R$ it is clear that the zero set is $\sigma(\mathbf{Y})$.
		Hence, $\mu$ is finitely supported measure on $\Gamma^*$ and the support is contained in $\sigma(\mathbf{Y})$. 
	\end{proof}

	\section{Interpolation and approximation via non-commutative operator calculus}\label{Sec:NC}
	
	This section is devoted to the interpolation and approximation of completely monotone functions defined in an acute, closed convex solid cone $\Gamma \subset \mathbb{R}^d$, holomorphically extended to the corresponding tube domain in the complex space. The approximation is constructive and the approximants are given by Laplace transforms of Wigner distributions or Fantappi\`e transforms of certain analytic functionals arising from the Hilbert space factorization of the previously analyzed Hankel kernel encoding the jet data at a prescribed point. 
    Certain canonical tuples of \textit{non-commuting} symmetric operators appear in the realization-type formulae (adopting control theory terminology) for these finitely determined entire and rational approximants in the Laplace and Stieltjes–Fantappi\`e settings, respectively. A notable feature of this approach is its computational accessibility, as the entire process ultimately relies on straightforward operations in matrix analysis.

\subsection{Directional exponential sum interpolation of Laplace transforms} \label{Subsec:Laplace-interpolation}
Let $\lambda_0$ be the vertex of an affine closed convex solid acute cone $\Gamma$. A function $f: \Gamma \to \mathbb{R}$ is said to be a {\em directionally completely monotone} if each fixed $\omega \in \Gamma$, the one-variable function 
$$
f_\omega(t) \bydef f(\lambda_0 + t \omega), \ \ t \in [0, +\infty)
$$
is completely monotone. In particular, we say that $f$ is a {\em directional exponential sum} if for each $\omega \in \Gamma$, the function $f_\omega$ is a finite sum of exponentials the form 
$$
f_\omega(t) = \sum_{j=1}^N c_j \exp(-\gamma_j t),
$$
for some non-negative real numbers $c_j$ and $\gamma_j$.

\begin{example} 
    Let $A=(1, 0)^T (1, 0)$ and $B=(1, 1)^T (1, 1)$ be two rank one PSD $2 \times 2$ matrices. Note that, $AB \neq BA$ and 
    $$AB+BA = \begin{bmatrix}
        2 & 1 \\ 1& 0
    \end{bmatrix}.$$ Now $v=(1, -1-\sqrt{2} \ )^T$ is an eigenvector corresponding to the eigenvalue $1-\sqrt{2}$ of $AB+BA$. Let us consider the function on the cone $\Gamma = \overline{\mathbb{R}^2_+}$ with vertex $\lambda_0 = (0, 0)$, given by
    $$
    F(p_1, p_2) \bydef \langle \exp(-p_1 A - p_2 B)v, v\rangle, \ \ p_1, p_2 \geq 0.
    $$
    For a fixed direction $\omega =(x_1, x_2)^T \in \Gamma$, we have the slice function:
$$
F_\omega(t) = F(t x_1, t x_2) = \langle \exp(-t(x_1 A + x_2 B)v, v\rangle.
$$
Now, $x_1A + x_2B$ is a PSD matrix with eigenvalues $\frac{1}{2}(x_1+ 2x_2 \pm \sqrt{x_1^2 + 4x_2^2})$, denoted by $\gamma_1$ and $\gamma_2$, respectively. Let $U$ be an orthogonal matrix such that 
$$U^T(x_1A + x_2B) U = \begin{bmatrix}
    \gamma_1 & 0\\ 0 & \gamma_2
\end{bmatrix}.$$
Thus, it is easy to verify that 
$$
F_\omega(t) = c_1 \exp(-t \gamma_1) + c_2 \exp(-t \gamma_2),
$$
where $c_1, c_2\geq 0$, proving that $F_\omega$ is completely monotone on $\overline{\mathbb{R}^1_+}$ and hence, $F$ is directionally complete monotone.

On the other hand, $v$ being an eigenvector of $AB+BA$ corresponding to the negative eigenvalue $1-\sqrt{2}$, we have
$$
\frac{\partial^2F}{\partial x_1 \partial x_2}(0, 0) =\langle (AB + BA)v, v \rangle < 0.
$$
Moreover, $F$ being a smooth function we can find a point $(p_1, p_2) \in \operatorname{int} \Gamma$ such that $\frac{\partial^2F}{\partial x_1 \partial x_2}(p_1, p_2) < 0.$ Thus $F$ is not a completely monotone function.
\end{example}

Now, we state one of our main interpolation results. 
	\begin{theorem}\label{th:Lap conv}
		Let $\Gamma \subset {\mathbb R}^d$ be an acute closed convex solid cone and let $\mu$ be a positive measure of moderate growth defined on $\Gamma^\ast$.
		Fix a point $\lambda \in \operatorname{int} \Gamma$. For every $n \in \mathbb{N}$, there exists a Wigner distribution associated 
		to a $d$-tuple of real symmetric matrices, such that its Laplace transform $F_n(z)$ is matching the Taylor coefficients of
		the Laplace transform
		$$ F(z) = \int_{\Gamma^\ast} \exp{(-z\cdot x)}\ d\mu(x), \ \ \Re z \in \operatorname{int} \Gamma,$$ 
        up to order $2n+1$ at $z = \lambda$. 
        
		The functions $F_n$ are entire of exponential type, and they are directional exponential sums on the affine cone $\lambda+\Gamma$. 
        
        Moreover, the sequence $(F_n)_n$ converges to $F$ on the open tube domain $T_{\lambda + \Gamma}$ associated with the cone $\lambda + \Gamma$, in the Fr\'echet topology of uniform convergence on compact subsets.
	\end{theorem}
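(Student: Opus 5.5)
The plan is to factor the Hankel kernel of the jet of $F$ at $\lambda$ through a finite-dimensional subspace of $L^2(\Gamma^\ast,\mu)$, realize $F_n$ as a compressed exponential, and then pass to the limit by a normal-family argument.

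\textbf{Step 1 (the Hilbert space and the matrices).} Let $u(x)=e^{-\lambda\cdot x/2}$, so that $\|u\|^2_{L^2(\mu)}=\int_{\Gamma^\ast}e^{-\lambda\cdot x}\,d\mu=F(\lambda)<\infty$ by moderate growth. Let $\mathcal H_n=\operatorname{span}\{[x^\alpha u]:|\alpha|\le n\}\subset L^2_{\mathbb R}(\Gamma^\ast,\mu)$; this is the GNS space of the PSD Hankel kernel $\Xi(\alpha,\beta)=\int x^{\alpha+\beta}e^{-\lambda\cdot x}d\mu$, $|\alpha|,|\beta|\le n$, and $\dim\mathcal H_n\le\binom{n+d}{d}$. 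Let $P_n$ be the orthogonal projection onto $\mathcal H_n$ and set $A_j=P_nX_jP_n|_{\mathcal H_n}$, where $X_j$ is multiplication by $x_j$. These are real symmetric matrices, and for $\xi\in\Gamma$ we have $\langle(\xi\cdot A)v,v\rangle=\int(\xi\cdot x)v^2\,d\mu\ge0$ because $\xi\cdot x\ge0$ on $\Gamma^\ast$. So $\xi\cdot A\ge0$ for all $\xi\in\Gamma$.

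Multiplication by $x_j$ sends $x^\beta u$ with $|\beta|<n$ back into $\mathcal H_n$. Hence, by induction, every word $A_{j_1}\cdots A_{j_k}u$ with $k\le n$ equals $x_{j_1}\cdots x_{j_k}u$, in any order.

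\textbf{Step 2 (definition of $F_n$ and interpolation).} Define
$$F_n(z)=\langle\exp(-(z-\lambda)\cdot A)u,u\rangle=\big(\mathcal W_u(A),e^{-(z-\lambda)\cdot x}\big).$$
By \eqref{Eq:Wigner} this is the Laplace transform of a Wigner distribution (shifted by $\lambda$), and it is entire of exponential type $\le\|A\|$. For any $\xi\in\mathbb R^d$ and $k\le 2n+1$,
$$\partial_t^k F_n(\lambda+t\xi)\big|_{t=0}=(-1)^k\langle(\xi\cdot A)^k u,u\rangle .$$
Expand $(\xi\cdot A)^k$ into words of length $k$ and split each word as $W_1A_jW_2$ (or $W_1W_2$ when $k$ is even) with $|W_1|,|W_2|\le n$. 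By Step 1 and the symmetry of the $A_j$,
$$\langle W_1A_jW_2u,u\rangle=\langle A_j(x^{\beta}u),x^{\alpha}u\rangle=\langle P_n(x_jx^\beta u),x^\alpha u\rangle=\int x_jx^{\alpha+\beta}e^{-\lambda\cdot x}d\mu .$$
Summing over all words gives $(-1)^k\int(\xi\cdot x)^ke^{-\lambda\cdot x}d\mu=\partial_t^kF(\lambda+t\xi)|_{t=0}$. Since homogeneous polynomials of degree $k$ are determined by their values on all directions $\xi$ (polarization), all Taylor coefficients of order $\le 2n+1$ at $\lambda$ agree.

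\textbf{Step 3 (directional exponential sums).} Fix $\omega\in\Gamma$. Diagonalize the PSD matrix $\omega\cdot A=\sum_k\gamma_kE_k$ with $\gamma_k\ge0$. Then
$$F_n(\lambda+t\omega)=\sum_k\langle E_ku,u\rangle e^{-t\gamma_k},$$
and the coefficients $\langle E_ku,u\rangle\ge0$.

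\textbf{Step 4 (convergence).} I expect the uniform bound here to be the only delicate point, because $\operatorname{Re}$ and $\operatorname{Im}$ of $(z-\lambda)\cdot A$ need not commute. Write $z-\lambda=p+iq$ with $p\in\operatorname{int}\Gamma$, and put $M=p\cdot A+i\,q\cdot A$. Then
$$\frac{d}{dt}\|e^{-tM}v\|^2=-2\langle(p\cdot A)e^{-tM}v,e^{-tM}v\rangle\le0,$$
since $p\cdot A\ge0$ and $q\cdot A$ is symmetric. So $\|e^{-M}\|\le1$ and $|F_n(z)|\le\|u\|^2=F(\lambda)$ on $T_{\lambda+\Gamma}$. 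By Montel's theorem $(F_n)$ is a normal family. Any subsequential limit $G$ has all Taylor coefficients at $\lambda$ equal to those of $F$, by Step 2 with $n\to\infty$. Hence $G=F$ near $\lambda$, and therefore on the connected tube. Consequently the whole sequence converges to $F$ uniformly on compact subsets of $T_{\lambda+\Gamma}$.
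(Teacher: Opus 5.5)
Steps 1--3 are correct and match the paper's construction, and your energy estimate in Step 4 is exactly the paper's Claim 1. Your split $W_1A_jW_2$ with $|W_1|,|W_2|\le n$ actually handles the top-order terms more cleanly than the paper's $M_1M_2$ split.

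\textbf{The gap: identifying the limit.} The point $\lambda$ does \emph{not} belong to the open tube $T_{\lambda+\Gamma}=(\lambda+\operatorname{int}\Gamma)+i\mathbb{R}^d$. Since $0\notin\operatorname{int}\Gamma$, $\lambda$ is the vertex of the affine cone, hence a boundary point. Montel's theorem only produces a limit $G$ that is holomorphic on the open tube, with convergence on compact subsets of it. Consequently:
\begin{itemize}
\item $G$ has no Taylor coefficients at $\lambda$;
\item the derivatives of $F_n$ at $\lambda$ need not pass to any limit related to $G$;
\item the identity theorem cannot be invoked at $\lambda$.
\end{itemize}
You also cannot simply enlarge the tube so that $\lambda$ becomes interior. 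There $\operatorname{Re}(z-\lambda)\cdot A$ is no longer PSD, and $\|\lambda\cdot A\|$ typically tends to infinity with $n$, so your estimate gives no bound uniform in $n$. The paper explicitly flags this pitfall before its concluding argument.

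\textbf{What is needed: carrying the jet at $\lambda$ into the tube.} The paper works along rays. For $\omega\in\Gamma$ it considers $F_{n,\omega}(z)=F_n(\lambda+z\omega)$ on the \emph{closed} right half-plane. For $k\le n$, uniformly up to the boundary,
$$|F_{n,\omega}^{(k)}(z)|\le\|u\|\,\|(\omega\cdot A)^ku\|,\qquad \|(\omega\cdot A)^ku\|^2=\int(\omega\cdot x)^{2k}e^{-\lambda\cdot x}\,d\mu=:m_{2k},$$
and this bound is independent of $n$. Arzel\`a--Ascoli on the closed half-plane then lets the derivatives at $0$ pass to the limit.

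To finish rigorously, use these bounds quantitatively. Matching one-sided derivatives at a boundary point does not by itself determine a function holomorphic in the half-plane: adding $e^{-1/\sqrt{z}}$ is invisible at $0$. Since $\lambda-\epsilon\omega\in\operatorname{int}\Gamma$ for small $\epsilon>0$, you have $m_{2k}\le(2k)!\,\epsilon^{-2k}F(\lambda-\epsilon\omega)$. Taylor's formula with remainder of order $n$ on $[0,t]$, applied to both $F_{n,\omega}$ and $F(\lambda+t\omega)$, then gives
$$|F_n(\lambda+t\omega)-F(\lambda+t\omega)|\le 2\|u\|\,F(\lambda-\epsilon\omega)^{1/2}(2t/\epsilon)^n\to0,\qquad 0\le t<\epsilon/2.$$

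Alternatively, use moment determinacy. The spectral measure of $\omega\cdot A$ at $u$ shares its first $2n+1$ moments with the push-forward of $e^{-\lambda\cdot x}\mu$ under $x\mapsto\omega\cdot x$. That push-forward has an exponential moment, so it is moment-determinate, and the spectral measures converge weakly to it.

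Either route gives convergence of the whole sequence on a nonempty open subset of $\lambda+\operatorname{int}\Gamma$. That set is a uniqueness set for holomorphic functions on the tube, and only then does your Montel argument conclude $G=F$.
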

    
\begin{proof}
Let us start with the Laplace transform $F$ of a positive measure $\mu$ having moderate growth and supported in $\Gamma^*$ and a point $\lambda \in \operatorname{int} \Gamma$ as in the statement of the theorem. Fix $n\in \mathbb{N}$.

	Let $H_n$ denote the finite dimensional subspace of $L_{\mathbb{R}}^2(\Gamma^*, \mu)$ defined as
    \begin{align*}
        H_n= \operatorname{span}_\mathbb{R} \{x^\alpha e^{-\frac{\lambda}{2} \cdot x} : \ \alpha \in \mathbb{N}_0^d, \ |\alpha| \leq n\}.
        \end{align*}
    
    As in the preceding section, for every $j \in \{1, \dots, d\}$, we consider the compression: 
    $$
    X^{(n)}_j = P_n X_j P_n
    $$ 
    of $X_j$, the multiplication by the variable $x_j$ on $L^2(\Gamma^*, \mu)$, on $H_n$. Here $P_n$ denotes the orthogonal projection onto $H_n$. Note that $X_j$ might be an unbounded, closed graph operator.
	
	The tuple $\boldsymbol{X}^{(n)} = (X^{(n)}_1, \dots, X^{(n)}_d)$ of real symmetric, finite dimensional space operators has joint numerical range $W(\boldsymbol{X}^{(n)})$ contained in the closed convex cone $\Gamma^\ast$. Indeed, the convexity of the closed  cone $\Gamma^\ast$ yields
	$$ \langle \boldsymbol{X}^{(n)} \psi, \psi \rangle =\left(\int_{\Gamma^\ast} x_1 \psi(x)^2 d\mu(x), \ldots, \int_{\Gamma^\ast} x_d \psi(x)^2 d\mu(x) \right) \in \Gamma^\ast,$$
	whenever $\psi(x) = \sum_{|\alpha| \leq n} c_\alpha x^\alpha e^{-\frac{\lambda}{2} \cdot x}$ in $H_n$ satisfies 
	$ \|\psi\| = 1.$
	Accordingly,
	$$ \Re (p\cdot \boldsymbol{X}^{(n)}) \geq 0$$ as an operator, for every $ p \in \Gamma$. 
We define the associated entire function,
	$$ F_n(z) \bydef \langle \exp (- (z- \lambda) \cdot \boldsymbol{X}^{(n)}) e^{-\frac{\lambda}{2} \cdot x}, e^{-\frac{\lambda}{2} \cdot x} \rangle, \quad z\in \mathbb{C}^d.$$
	Denote $u(x) =  e^{-\frac{\lambda}{2} .\cdot x}$ and let ${\mathcal W}_u(\boldsymbol{X}^{(n)})$ be the Wigner distribution associated to $\boldsymbol{X}^{(n)}$ and vector $u$ such that
	$$( {\mathcal W}_u(\boldsymbol{X}^{(n)})(\xi), \phi(x \cdot \xi)) = \langle \phi( x\cdot \boldsymbol{X}^{(n)}) u, u\rangle, \ \ \phi \in {\mathcal E}({\mathbb R}).$$
	Consequently, 
	$$ F_n(z + \lambda) = ({\mathcal W}_u(\boldsymbol{X}^{(n)})(\xi),\,  e^{-z \cdot \xi})$$
	is the Laplace transform of the distribution ${\mathcal W}_u(\boldsymbol{X}^{(n)})$. Clearly, $F_n(z)$ is entire function of exponential type. 

\vspace{2mm}

  \noindent \textbf{Matching  the Taylor coefficients:} We now show that the partial derivatives in the variables $z$ of $F_n$ and $F$ are identical at $z = \lambda$, up to order $2n+1$.
    
For $\alpha=(\alpha_1, \dots, \alpha_d) \in \mathbb{N}_0^d$, the coefficient of $\frac{1}{|\alpha|!}(z-\lambda)^\alpha$ in the non-commutative power series expansion of $\exp (- (z- \lambda) \cdot \boldsymbol{X}^{(n)})$ at $\lambda$, turns out to be the sum of the words in $-X^{(n)}_1, \dots,-X^{(n)}_d$ containing $\alpha_j$ many $X^{(n)}_j$ for each $j$. We denote the collection of such words by $\mathcal{M}(\alpha)$.
Thus, for $|\alpha| \leq 2n+1$, the coefficient of $(z-\lambda)^\alpha$ in the power series of $F_n(z)$ at $\lambda$
would become:
\begin{align}\label{Eq:NC-sum}
\frac{1}{|\alpha|!} \sum_{M \in \mathcal{M}(\alpha)} \langle M e^{-\frac{\lambda}{2} \cdot x}, e^{-\frac{\lambda}{2} \cdot x} \rangle.
\end{align}

Note that, if $|\alpha| \leq n$, then for any word $M \in \mathcal{M}(\alpha)$,
\begin{align}\label{Eq:NC-Laplace}
M e^{-\frac{\lambda}{2} \cdot x} = (-1)^{|\alpha|} [\boldsymbol{X}^{(n)}]^\alpha e^{-\frac{\lambda}{2} \cdot x}.
\end{align}
Moreover, if $|\alpha|\leq 2n+1$ and $M\in \mathcal{M}(\alpha)$, then we write $M=M_1M_2$ such that $M_1$ has length at most $n+1$ and $M_2$ has length at most $n$. So, each summand in \eqref{Eq:NC-sum} can be expressed in the form $\langle M_1 e^{-\frac{\lambda}{2} \cdot x}, M_2 e^{-\frac{\lambda}{2} \cdot x}\rangle$. Suppose, $M_j$ contains $s_{j, k}$ many $X^{(n)}_k$ in its expression as a word with $1\leq k \leq d$ and $\beta^{(j)} = (s_{j, 1}, \cdots, s_{j, d} )$ for $j=1,2$. Thus, using \eqref{Eq:NC-Laplace} for $M_1$ and $M_2$ we obtain 
\begin{align*}
\langle M_1 e^{-\frac{\lambda}{2} \cdot x}, M_2 e^{-\frac{\lambda}{2} \cdot x}\rangle &= \langle (-1)^{|\beta^{(1)}|} [\boldsymbol{X}^{(n)}]^{\beta^{(1)}} e^{-\frac{\lambda}{2} \cdot x}, (-1)^{|\beta^{(2)}|} [\boldsymbol{X}^{(n)}]^{\beta^{(2)}} e^{-\frac{\lambda}{2} \cdot x}\rangle \\
& =(-1)^{|\alpha|} \int_{\Gamma^*} x^\alpha e^{-\lambda \cdot x} d\mu(x).
\end{align*}
Therefore, \eqref{Eq:NC-sum} yields that the coefficient of $(z-\lambda)^\alpha$ in the power series of $F_n(z)$ at $z=\lambda$, for $|\alpha| \leq 2n+1$, is
$$
(-1)^{|\alpha|} \frac{|\mathcal{M}(\alpha)|}{|\alpha|!}\int_{\Gamma^*} x^\alpha e^{-\lambda \cdot x} d\mu(x) =  \frac{(-1)^{|\alpha|}}{\alpha !} \int_{\Gamma^*} x^\alpha e^{-\lambda \cdot x} d\mu(x).
$$
Hence, for $|\alpha| \leq 2n+1$
$$
\partial^\alpha_ z F_n |_{z=\lambda} = \partial^\alpha_ z F |_{z=\lambda}, 
$$
i.e., $F_n$ and $F$ have the matching Taylor coefficients up to order $2n+1$ at $\lambda$. 

\vspace{2mm}

 \noindent \textbf{Directional exponential sum:} Denote the function $e^{-\frac{\lambda}{2} \cdot x}$ by $u$. For a fixed $\omega \in \Gamma$, by orthogonal diagonalization of the real symmetric operator $ \omega \cdot X^{(n)}$ we obtain that
	\begin{align}\label{Eq:dir-exp-sum}
	   F_n(\lambda + t \omega) = \sum_{k=1}^N |c_k|^2 \exp{(-t \gamma_k )}, \ \ t\in [0, +\infty) 
	\end{align}
    is an sum of $N ( \leq \dim H_n)$ exponentials where the nodes $\gamma_k \in \sigma (\omega \cdot \boldsymbol{X}^{(n)}) \subset [0, +\infty)$ and the coefficients $|c_k|^2 \geq 0$ depend on the direction $\omega$. Therefore, $F_n$ is a directional exponential sum on the affine cone $\lambda +\Gamma$.
    In fact, if $U$ is an orthogonal matrix with an ordered orthonormal eigen-basis on $H_n$ as its column, then the real numbers $c_1, \dots, c_d$ are the inner products of the ordered basis vectors with $u$, respectively. So, $\sum_{j=1}^N |c_j|^2 = \|u\|^2 = F(\lambda)$. 
    
     Note that $F_{n, \omega}$ has a unique holomorphic extension to $\mathbb{H}^+_R = [0,\infty) + i \mathbb{R}$ given by
    $$
    F_{n, \omega}(z) =  \langle \exp(- z \omega \cdot \boldsymbol{X}^{(n)})u, u \rangle, \ \ z\in \mathbb{H}^+_R.
    $$
    Further, $\omega \cdot \boldsymbol{X}^{(n)}$ being positive semi-definite,
    $$ \| \exp(-z  \omega \cdot \boldsymbol{X}^{(n)}) \| \leq 1 \quad \text{for } z\in \mathbb{H}^+_R.$$
    
    Also, from \eqref{Eq:dir-exp-sum}, we see that, for $t+is \in \mathbb{H}^+_R$,
    $$
    |F_{n, \omega}(t +is)| =\left| \sum_{k=1}^N |c_k|^2 \exp{(-(t+is) \gamma_k )} \right| \leq \sum_{k=1}^N |c_k|^2 = F(\lambda).
    $$
    Further more, for $1\leq k \leq n$, we have
    \begin{align*}
        \frac{d^k F_{n, \omega}}{dz^k}(z) = (-1)^k \langle (\omega \cdot \boldsymbol{X}^{(n)})^k \exp(-z \ \omega \cdot \boldsymbol{X}^{(n)})) u, u \rangle.
    \end{align*}
For $1\leq k \le n$, the vector $(\omega \cdot \boldsymbol{X}^{(n)})^k u$ in $H_n$ can be expressed as 
$$
(\omega \cdot \boldsymbol{X}^{(n)})^k u = \sum_{\alpha \in \mathbb{N}_0^d; |\alpha|=k} \frac{k!}{\alpha!}\omega^\alpha [\boldsymbol{X}^{(n)}]^\alpha u.
$$
This is because, for any word $M$ of length $k$ $(\leq n)$ with $\alpha_j$ many $X^{(n)}_j$ as its letters for $1\leq j \leq d$, $M u = [\boldsymbol{X}^{(n)}]^\alpha u$ as illustrated in \eqref{Eq:NC-Laplace}. Therefore,
\begin{align*}
 \| (\omega \cdot \boldsymbol{X}^{(n)})^k u  \|^2 &= \sum_{\alpha, \beta \in \mathbb{N}_0^d; |\alpha|=|\beta|=k} \frac{(k!)^2}{\alpha! \beta!} \omega^{\alpha + \beta}\langle [\boldsymbol{X}^{(n)}]^\alpha u,  [\boldsymbol{X}^{(n)}]^\beta u\rangle \\
 &= \sum_{\alpha, \beta \in \mathbb{N}_0^d; |\alpha|=|\beta|=k} \frac{(k!)^2}{\alpha! \beta!} \omega^{\alpha + \beta} \int_{\Gamma^*} x^{\alpha + \beta} e^{-\lambda \cdot x} d\mu(x)\\
 &= \sum_{\alpha, \beta \in \mathbb{N}_0^d; |\alpha|=|\beta|=k} \frac{(k!)^2}{\alpha! \beta!} \omega^{\alpha + \beta} \partial^{\alpha + \beta}_z F|_{z=\lambda}\\
 &=\frac{d^k F_\omega}{dt^k}(0),
\end{align*}
    where $F_\omega$ is the slice function given by 
    $$
    F_\omega(t) \bydef F(\lambda + t \omega), \quad t\in [0, \infty).
    $$
    Whenever $0\leq k \leq n$, we assemble these information to get
    \begin{align*}
        \left| \frac{d^k F_{n, \omega}}{dz^k}(z) \right| &= \left| \langle  \exp(-z \ \omega \cdot \boldsymbol{X}^{(n)})) u, (\omega \cdot \boldsymbol{X}^{(n)})^k u \rangle \right| \\
        & \leq \|\exp(-z \ \omega \cdot \boldsymbol{X}^{(n)})) u\| \| (\omega \cdot \boldsymbol{X}^{(n)})^k u\| \\
        & \leq (F(\lambda))^{1/2} \left(\frac{d^k F_\omega}{d t^k}(0)\right)^{1/2}.
    \end{align*}

Thus, for each $k \in \mathbb{N}_0$, the sequence of smooth functions $\{F_{n, \omega}\}_n$ on $\overline{\mathbb{H}^+_U}$ satisfies
\begin{align} \label{Eq:Arzela-cond}
    \left| \frac{d^k F_{n, \omega}}{dz^k}(z) \right|  \leq (F(\lambda))^{1/2} \left(\frac{d^k F_\omega}{d t^k}(0)\right)^{1/2}, \ \text{for all } n \geq k. 
\end{align}

\vspace{2mm}
    
\noindent \textbf{Convergence of the sequence of approximants:} We complete the proof of the theorem by showing that the sequence of entire function $\{F_n\}_n$ converges to $F$ in $\mathcal{O}(T_{\lambda + \Gamma})$, where $T_{\lambda + \Gamma} = (\lambda + \operatorname{int} \Gamma) + i \ \mathbb{R}^d$.

\vspace{2mm}

\noindent \textbf{Claim 1:} The sequence $\{F_n\}_n$ is uniformly bounded on $T_{\lambda + \Gamma}$.

\noindent \textbf{Proof of Claim 1:} Take any $z= \lambda + p + i q \in T_{\lambda + \Gamma}$ where $p \in \operatorname{int} \Gamma$ and $q\in \mathbb{R}^d$. Then,
$$
F_n(z) = \langle \exp(-P - i Q) u, u \rangle,
$$
where $P= p \cdot \boldsymbol{X}^{(n)}$ and $Q= q\cdot \boldsymbol{X}^{(n)}$ which are both real symmetric operators on $H_n$. Define the $H_n$-valued function
   $$
   v(t) \bydef \exp(-t(P+ iQ)) u, \quad t\in [0, \infty).
   $$
Then 
$$
v'(t) \bydef \frac{d v}{dt}(t) = -(P+i Q) v(t).
$$ and 
\begin{align} \label{Eq:dervative}
    \frac{d}{dt}(\|v(t)\|^2) = \langle v'(t),  v(t) \rangle + \langle v(t), v'(t) \rangle = -2\langle  P \ v(t),  v(t)\rangle.
\end{align}
Denoting $\hat{v}(t) = \frac{v(t)}{\|v(t)\|}$, we have 
\begin{align*}
    \langle  P \ v(t),  v(t)\rangle = \|v(t)\|^2 \langle  p \cdot \boldsymbol{X}^{(n)} \ \hat{v}(t),  \hat{v}(t)\rangle = \|v(t)\|^2  (p \cdot \langle \boldsymbol{X}^{(n)} \ \hat{v}(t),  \hat{v}(t)\rangle) \geq 0, 
\end{align*}
since $\langle \boldsymbol{X}^{(n)} \ \hat{v}(t),  \hat{v}(t)\rangle \in W(\boldsymbol{X}^{(n)}) \subseteq \Gamma^*$. Thus, from \eqref{Eq:dervative} we have
$$
 \frac{d}{dt}(\|v(t)\|^2) \leq 0 \quad \text{for } t\in [0, \infty), 
$$
and so, for $t\in [0, \infty)$,
$$
\|v(t)\|^2 \leq \|v(0)\|^2 = \|u\|^2 = F(\lambda).
$$
Putting $t=1$, we have 
$$
|F_n(z)| \leq \|\exp(-(P+iQ))u \| \|u\| \leq \|u\|^2 = F(\lambda),
$$
for any $z \in T_{\lambda + \Gamma}$. This proves our Claim 1.

\vspace{2mm}

Now, by Montel's theorem we get a subsequence $\{F_{n_j}\}_j$ of $\{F_n\}_n$ such that $F_{n_j}$ converges to $G$ in the Fr\'echet topology of $\mathcal{O}(T_{\lambda+\Gamma})$. We shall prove that $G=F$ in $T_{\lambda + \Gamma}$. Then the original sequence $\{F_n\}_n$ itself will converge to $F$ in $\mathcal{O}(T_{\lambda+\Gamma})$. Therefore, without any loss of generality, we assume that $F_n$ converges to $G$. 

At this stage, it might be tempting to conclude that $G=F$ follows from the standard normal family argument as we have matching derivatives of $F_n$ and $F$ up to order $2n+1$ at the point $\lambda$,. But this is not the case because $\lambda$ is not an interior point of $T_{\lambda +\Gamma}$. So, we analyze the slice functions along each direction in $\Gamma$.

Fix a direction $\omega \in \Gamma$. We know that the slice function $F_{n, \omega}$ satisfies the bounds in \eqref{Eq:Arzela-cond}. Applying Arzela-Ascoli theorem, we get a subsequence $\{F_{n_j, \omega}\}$ such that for each $k\in \mathbb{N}_0$,
\begin{align}\label{Eq:Arzela-Ascoli}
\frac{d^k F_{n_j, \omega}}{dz^k} \  \rightarrow \ \frac{d^k g_\omega} {dz^k}, \quad \text{as } j\to \infty,
\end{align}
uniformly on compact subsets of $\overline{\mathbb{H}^+_R}$, for some smooth function $g_\omega$ on $\overline{\mathbb{H}^+_R}$. In particular, $g_\omega \in \mathcal{O}(\mathbb{H}^+_R)$.

\vspace{2mm}

\noindent \textbf{Claim 2:} For each $k\in \mathbb{N}_0$,
\begin{align}\label{Eq:equal-derivatives}
    \frac{d^k F_\omega}{dt^k}(0) = \frac{d^k g_\omega}{dz^k}(0).
\end{align}

\noindent \textbf{Proof of Claim 2:} From \eqref{Eq:Arzela-Ascoli}, we have
\begin{align}\label{Eq:conv-zero}
    \frac{d^k F_{n_j, \omega}}{dz^k}(0) \  \rightarrow \ \frac{d^k g_\omega} {dz^k}(0), \quad \text{as } j\to \infty.
\end{align}
Again for $k \leq n_j$,
\begin{align*}
    \frac{d^k F_{n_j, \omega}}{dz^k}(0) &= (-1)^k \langle (\omega \cdot \boldsymbol{X}^{(n_j)} )^k u, u \rangle  \\
    &= (-1)^k \sum_{\alpha \in \mathbb{N}_0^d; |\alpha|=k} \frac{k!}{\alpha!}\omega^\alpha \langle  [\boldsymbol{X}^{(n_j)} ]^\alpha u, u \rangle \\
    &=(-1)^k \sum_{\alpha \in \mathbb{N}_0^d; |\alpha|=k} \frac{k!}{\alpha!}\omega^\alpha \int_{\Gamma^*} x^\alpha e^{-\lambda \cdot x} d\mu(x) \\
    &= \sum_{\alpha \in \mathbb{N}_0^d; |\alpha|=k} \frac{k!}{\alpha!}\omega^\alpha \partial^\alpha_z F_\omega|_{z=\lambda}
    = \frac{d^k F_\omega}{dt^k}(0).
\end{align*}
The second equality above is obtained by using \eqref{Eq:NC-Laplace}.
Thus, \eqref{Eq:conv-zero} yields the equality as in \eqref{Eq:equal-derivatives} for every $k\in \mathbb{N}_0$. This proves our Claim 2.

\vspace{2mm}
\noindent \textbf{Concluding part:} Since $\lambda$ is an interior point of $\Gamma$, there exists $\epsilon > 0$ such that $\lambda + t\omega \in \operatorname{int} \Gamma$ for $t\in (-\epsilon, \epsilon)$. Also, the function $F$ is defined in the tube domain $T_\Gamma$. Thus, the function
$$
F_\omega(t) = F(\lambda + t\omega) = \int_{\Gamma^*} \exp(-(\lambda+ t\omega)\cdot x) d\mu(x)
$$
is also defined for $t\in (-\epsilon, \infty)$. However, $F$ being holomorphic in $T_\Gamma$, $F_\omega$ is real analytic in $(-\epsilon, \infty)$. So, $F_\omega$ has a power series representation in neighbourhood $(-\epsilon', \epsilon')$ of $0$. But the equality of derivatives of $F_\omega$ and $g_\omega$ at $0$ as in \eqref{Eq:equal-derivatives} implies that the same power series provides a real analytic extension of $g_\omega$ in $(-\epsilon', \epsilon')$. Remember that $g_\omega$ is holomorphic in $\mathbb{H}^+_R$. Thus, $g_\omega$ is real analytic in $(-\epsilon', \infty)$ and so, by the identity theorem for real analytic functions, $F_\omega = g_\omega$ on $(-\epsilon', \infty)$.

Therefore, for any arbitrary point $\lambda + p \in \lambda + \operatorname{int} \Gamma$. Then 
$$
\lim_{j\to \infty} F_{n_j}(\lambda + p) = \lim_{j\to \infty} F_{n_j, p} (1) = g_p(1) = F_p(1)=F(\lambda +p).
$$
Again, recall that $F_n$ converges to $G$ in $\mathcal{O}(T_{\lambda + \Gamma})$. So, 
$$
\lim_{j\to \infty} F_{n_j}(\lambda + p) = G(\lambda + p).
$$
Thus, $G(\lambda + p)=F(\lambda + p)$ and hence, $G=F$ in $\lambda + \operatorname{int} \Gamma$.  

Finally, we note that $\Gamma$ being a solid cone, $\lambda + \operatorname{int} \Gamma$ is a non-empty open set in $\mathbb{R}^d$ and so, by the Cauchy-Riemann equations for holomorphic functions in the open tube $T_{\lambda + \Gamma}$ we see that $\lambda + \operatorname{int} \Gamma$ is a uniqueness set for $T_{\lambda +\Gamma}$. Hence, $F=G$ in $T_{\lambda + \Gamma}$. This completes the proof.

\end{proof}
\begin{remark}
    The support of Wigner distribution  ${\mathcal W}_u(\boldsymbol{X}^{(n)})$ is contained in the convex hull of the joint numerical range $W(\boldsymbol{X}^{(n)})$, while its singular support is contained in {the affine part of the dual of the projective hypersurface} 
	$$ \det [ x_0 + x \cdot \boldsymbol{X}^{(n)}] = 0.$$
	For a proof we refer to \cite[Proposition 4]{SW}.
	In particular, for a fixed $\omega \in \Gamma$, the nodes $\gamma_k$ are the roots of the characteristic polynomial $\det (t - \omega \cdot X^{(n)})$.
\end{remark}

	\begin{remark}
       It is important to mention that the positive measure $\mu$ appearing in the preceding theorem may not be moment determinate. As a matter of fact, the analysis of indeterminate measures opens a wide array of challenging questions \cite{Ahiezer-Krein, Schmudgen-book}. However, the tempered measure $e^{-\lambda\cdot x} d\mu(x)$ is moment determinate because the germ of its Laplace transform at $z=\lambda$ uniquely determines the full Laplace transform, and consequently, the measure itself.
	\end{remark}

    \subsection{The determinantal variety} 
    Let us fix a positive integer $n$ and continue the notations and assumptions of Theorem~\ref{th:Lap conv} and its proof to analyze the $d$-tuple of real symmetric operators $\boldsymbol{X}^{(n)}$, acting on the finite dimensional real Hilbert space $H_n$, from an algebraic perspective.
    For notational convenience we denote the $d$-tuple $\boldsymbol{X}^{(n)}$ by $\boldsymbol{Y}= (Y_1, \ldots, Y_d)$ and the function $e^{- \frac{\lambda}{2} \cdot x}$ in $H_n$ by $u$. Then,
$$ \langle Y_j \boldsymbol{Y}^\alpha u, \, \boldsymbol{Y}^\beta u \rangle = - (-1)^{|\alpha + \beta|} \frac{\partial^{|\alpha + \beta| + 1}}{\partial p^{\alpha + \beta + 1_j}}F|_{p=\lambda}, \ \ |\alpha|, |\beta| \leq n,$$
where $1_j \in \mathbb{N}_0^d$ denotes the multi-index having $1$ at $j$-th position and $0$ elsewhere.
Since the vector $u$ is $\boldsymbol{Y}$-cyclic,
$$ H_n = \operatorname{span}_\mathbb{R} \{ \boldsymbol{Y}^\alpha u :  \ |\alpha| \leq n\}, $$
and the dimension of the Hilbert space $H_n$ is equal to the rank of the associated Hankel kernel:
$$ \dim H_n = \operatorname{rank}  \Xi,$$
where 
$$ \Xi_{\alpha, \beta} = (-1)^{|\alpha + \beta|} \frac{\partial^{|\alpha + \beta|}}{\partial p^{\alpha + \beta}}F|_{p=\lambda}, \ \ |\alpha|, |\beta| \leq n.$$
The odd order $2n+1$ data (i.e., Taylor coefficients of $F$ at $\lambda$ up to order $2n+1$) provides an extension of the above positive semi-definite kernel to values 
\begin{equation}\label{split-kernel}
 \Xi_{\alpha + 1_j, \beta} =  \Xi_{\alpha, \beta+ 1_j} = \langle Y_j \boldsymbol{Y}^\alpha u, \boldsymbol{Y}^\beta u \rangle, \ \ |\alpha|, |\beta| \leq n.
 \end{equation}
 We are interested in the invertibility of the linear pencil 
$$z_0 + z\cdot \boldsymbol{Y} : H_n \longrightarrow H_n.$$ To this aim, we complexify the Hilbert space $H_n$ and allow $z_0$ and the entries of 
$z = (z_1, \ldots, z_d)$ to be complex numbers. Clearly, $\Xi$ is essentially the Gram matrix $((\langle \boldsymbol{Y}^\alpha u, \ \boldsymbol{Y}^\beta u \rangle))$. Employing standard linear algebra arguments we can connect the rank of this Gram matrix and the matrix $((\langle  (z_0 + z\cdot \boldsymbol{Y}) \boldsymbol{Y}^\alpha u, \ \boldsymbol{Y}^\beta u \rangle))$ with the determinantal variety $\det [ z_0 + z\cdot \boldsymbol{Y} ] = 0$. This is described in the following result.

\begin{proposition} Let $\Xi$ be the Hankel kernel associated to the jet of order $2n+1$ at $\lambda \in \operatorname{int} \Gamma$  of a completely monotone function $F$ in $\Gamma$ and let $\boldsymbol{Y} = (Y_1, \ldots, Y_d)$ be the $d$-tuple of real symmetric finite dimensional operators which factors  $\Xi$ as in (\ref{split-kernel}). A $(d+1)$-tuple of complex numbers $(z, z_0)$ satisfies
$$ \det [ z_0 + z\cdot \boldsymbol{Y}] = 0$$ 
if and only if 
$$ \operatorname{rank}  ((z_0 \Xi_{\alpha, \beta} + \sum_{j=1}^d z_j \Xi_{\alpha+1_j, \beta})) < \operatorname{rank}  (( \Xi_{\alpha, \beta})),$$
both sides regarded as symmetric kernels in the variables $(\alpha, \beta), \ |\alpha|, |\beta| \leq n.$
\end{proposition}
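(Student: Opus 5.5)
The plan is to realize both kernels as Gram-type matrices through a single synthesis operator. Index the monomials $\alpha$ with $|\alpha|\le n$. Define the (complexified) linear map $V:\mathbb{C}^{d_n}\to H_n\otimes\mathbb{C}$ by
\[
V e_\alpha = \boldsymbol{Y}^\alpha u ,
\]
where the monomial is evaluated in any fixed order. By \eqref{Eq:NC-Laplace}-type reasoning for words of length at most $n$, the result is independent of that order. Then $\Xi_{\alpha,\beta}=\langle \boldsymbol{Y}^\alpha u,\boldsymbol{Y}^\beta u\rangle$, so $\Xi = V^{T}V$, using the bilinear (non-Hermitian) pairing. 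Likewise (\ref{split-kernel}) gives
\[
z_0\Xi_{\alpha,\beta}+\sum_j z_j\Xi_{\alpha+1_j,\beta} = \langle (z_0+z\cdot\boldsymbol{Y})\boldsymbol{Y}^\alpha u,\boldsymbol{Y}^\beta u\rangle ,
\]
so the second kernel equals $V^{T}(z_0+z\cdot \boldsymbol{Y})V$. Everything is bilinear, and entries are polynomial in $(z,z_0)$, so complexification is harmless.

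Next I use cyclicity. Since $H_n=\operatorname{span}\{\boldsymbol{Y}^\alpha u\}$, $V$ is surjective onto the complexified $H_n$, and $\operatorname{rank}\Xi=\operatorname{rank}V=\dim H_n=:m$. Dually, $V^{T}: H_n\to\mathbb{C}^{d_n}$ is injective, being the transpose of a surjection with respect to the nondegenerate bilinear form on the complexification of a real inner product space. Hence, for any operator $T$ on $H_n$,
\[
\operatorname{rank}(V^{T}TV)=\operatorname{rank}(T).
\]
Indeed, $V$ onto gives $\operatorname{rank}(TV)=\operatorname{rank}T$, and $V^T$ injective preserves rank. Applying this with $T=z_0+z\cdot\boldsymbol{Y}$ gives
\[
\operatorname{rank}\bigl(z_0\Xi+\textstyle\sum_j z_j\Xi_{\cdot+1_j,\cdot}\bigr)=\operatorname{rank}(z_0+z\cdot\boldsymbol{Y}).
\]

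The conclusion is then immediate. The right side is $<m=\operatorname{rank}\Xi$ exactly when the $m\times m$ operator $z_0+z\cdot\boldsymbol{Y}$ is singular, that is, when $\det[z_0+z\cdot\boldsymbol{Y}]=0$.

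The only delicate point, which I expect to be the main obstacle, is the bookkeeping that identifies $\Xi_{\alpha+1_j,\beta}$ with $\langle Y_j\boldsymbol{Y}^\alpha u,\boldsymbol{Y}^\beta u\rangle$ when $|\alpha|=n$. There $\boldsymbol{Y}^{\alpha+1_j}u$ is a compressed vector, not the true $x^{\alpha+1_j}u$. This is handled by moving $Y_j$ across via symmetry of the compression: $\langle P_nX_jP_n\,x^\alpha u, x^\beta u\rangle=\int x^{\alpha+\beta+1_j}e^{-\lambda\cdot x}d\mu$, with $|\alpha+\beta+1_j|\le 2n+1$. This is precisely the content of (\ref{split-kernel}), which is taken as given.
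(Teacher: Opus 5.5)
Your proposal is correct and follows essentially the paper's own (very brief) argument. The paper identifies $\Xi$ as the Gram matrix $((\langle \boldsymbol{Y}^\alpha u, \boldsymbol{Y}^\beta u\rangle))$ and the pencil kernel as $((\langle (z_0+z\cdot\boldsymbol{Y})\boldsymbol{Y}^\alpha u, \boldsymbol{Y}^\beta u\rangle))$, then appeals to ``standard linear algebra'' via the cyclicity of $u$; this is exactly your factorization through $V$ with $V$ surjective and $V^{T}$ injective, where you additionally spell out the rank identity $\operatorname{rank}(V^{T}TV)=\operatorname{rank}T$ (strictly, the kernel is $V^{T}T^{T}V$, which equals $V^{T}TV$ because $T=z_0+z\cdot\boldsymbol{Y}$ is complex symmetric).
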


In other terms, the projective spectrum of the $d$-tuple $\boldsymbol{Y}$ is built in the Hankel matrix of the jet data associated to $F$. This is relevant to the location of the nodes in the Laplace transform of Wigner distribution ${\mathcal W}_u(\boldsymbol{Y})$ as
$$ ({\mathcal W}_u(\boldsymbol{Y})(\xi), e^{- t \omega \cdot \xi}) = \sum_{k=1}^{\dim H_n} c_k \exp (- t \gamma_k), \ \ t > 0, \ \ \omega \in {\rm int} \  \Gamma,$$
where $-\gamma_k$ are the roots of 
$ \det [z_0 + \omega \cdot \boldsymbol{Y}] = 0$ regarded as a polynomial in $z_0$. The above finite sum of exponential is obtained via a similar argument preceding \eqref{Eq:dir-exp-sum}.

	\subsection{Rational Herglotz-Nevanlinna interpolation of Stieltjes-Fantappi\`e transforms} \label{Subsec:Fantappie-interpolation}
	
	We now return back to interpolation scheme for the Stieltjes-Fantappi\`e transforms on an acute closed convex solid cone $\Gamma \subset \mathbb{R}^d$ is. Let 
	$$\Phi(p, p_0) = \int_{\Gamma^\ast} \frac{d\mu(x)}{p_0 + p\cdot x}, \quad p_0 >0, p \in \operatorname{int} \Gamma,$$ be the Stieltjes-Fantappi\`e transform of a measure $\mu$ in $\mathcal{M}^+(\Gamma^*)$. 
    
	Fix a point $\lambda \in \operatorname{int} \Gamma$. For each $n\in \mathbb{N}$, consider the real linear subspace $H_n$ of $L_\mathbb{R}^2(\Gamma^*, \mu)$ given by
    \begin{align*}
         H_n = \operatorname{span}_\mathbb{R} \left \lbrace \frac{x^\alpha}{(1+ \lambda \cdot x)^{|\alpha|+ j}} : \ (\alpha, j) \in \mathbb{N}_0^d \times \mathbb{N}_0, \ |\alpha| + j \leq n\right \rbrace.
    \end{align*}
    Let $L_n$ denote the subspace of  $ H_n$ generated by rational functions which vanish at infinity, specifically,
	$$ L_n = {\rm span}_\mathbb{R} \left \lbrace \frac{x^\alpha}{(1+ \lambda \cdot x)^{|\alpha|+ j}} \in L_\mathbb{R}^2(\Gamma^*, \mu) : |\alpha| + j \leq n, \ j \geq 1 \right \rbrace.$$
	Note that, $1\in H_n$ and  the function $x_j f$ belongs to $H_{n+1}$ whenever $f \in L_n$, for every $ 1 \leq j \leq d$.
	The orthogonal projection of $H_{n+1}$ onto $L_n$ is denoted by $\pi_n$. Let $X_j$ stand for the closed operator of multiplication by $x_j$ on $L^2(\Gamma^*, \mu)$, where $1\leq j \leq d$.
	Define the linear operators $X^{(n)}_j$ acting on $H_n$ as follows:
	$$ X^{(n)}_j (f) \bydef \pi_n X_j \pi_n (f), \quad  f \in H_n, \ \ 1 \leq j \leq d.$$
	It is essential to note that $L_n$ is a subspace of the domain of definition of $X_j$ and
	$$ \langle X^{(n)}_j f, g \rangle = \int x_j f g d\mu, \ \ \text{ for } f, g \in H_n.$$
	Moreover, $ X^{(n)}_j f = 0$ whenever $f \in H_n \ominus L_n$. 
	
	Each $X^{(n)}_j$ is a real symmetric linear operator acting on the finite dimensional space $H_n$, but in general, the tuple 
	$\mathbf{X}^{(n)} = ( X^{(n)}_1, \ldots, X^{(n)}_d)$ is non-commutative. However, the  joint numerical range $W(\mathbf{X}^{(n)})$ is contained in $\Gamma^*$ as
    $$
    p \cdot  \langle \mathbf{X}^{(n)} \psi, \psi  \rangle = \int_{\Gamma^\ast} (p \cdot x)\  (\psi(x))^2 d\mu(x) \geq 0,
    $$
    for any $p \in \Gamma$ and $\psi \in L_n$.

	The inverse of the homogeneous pencil associated to $\mathbf{X}^{(n)}$ evaluated at a fixed vector:
	\begin{equation}\label{Phi}
	\Phi_n(z, z_0) \bydef \langle (z_0 + z \cdot \mathbf{X}^{(n)})^{-1} \pi_n 1, \pi_n 1  \rangle,
	\end{equation}
	belongs to the Herglotz-Nevanlinna class associated to the tube domain $T_{\Gamma \times \mathbb{R}_+}$. Indeed, for $(z, z_0) = (p+ i q, p_0 + i q_0) \in T_{\Gamma \times \mathbb{R}_+}$, we have
	$$
	2 \Re ( z_0 + z \cdot {\mathbf X}^{(n)} )^{-1} 
	= 2 (z_0 + z \cdot {\mathbf X}^{(n)} )^{-1} ( p_0 + p\cdot {\mathbf X}^{(n)}) 
	(z_0 + z \cdot {\mathbf X}^{(n)} )^{*-1},
	$$
	and 
	\begin{align*}
		\Re \Phi_n(z, z_0) &=  \langle ( p_0 + p\cdot \mathbf{X}^{(n)} ) g , \ g  \rangle \geq 0,
	\end{align*}
	where $g = (z_0 + z \cdot {\mathbf X}_n )^{*-1} \pi_n1.$ Here and henceforth, we complexify the real Hilbert space $H_n$.
	Above, we exploit the duality between $\Gamma$ and $\Gamma^\ast$ and the fact that the restricted joint numerical range of the $d$-tuple $\mathbf{X}^{(n)}$ belongs to $\Gamma^\ast$.
	As a matter of fact, the operator  $(z_0 + z \cdot \mathbf{X}^{(n)})^{-1}$ has non-negative real part.
	Moreover, $\Phi_n(z, z_0)$ is a rational function of degree not exceeding the dimension of the space $H_n$. 

\vspace{2mm}

\noindent \textbf{Directional Stieltjes sum:}
For a fixed $\omega \in \Gamma$, applying the spectral resolution to the real symmetric operator $\omega \cdot \mathbf{X}^{(n)}$, we infer
$$
\Phi_n(t \omega, z_0) = \langle (z_0 + t\omega \cdot \mathbf{X}^{(n)})^{-1} \pi_n 1, \pi_n 1 \rangle, \quad t \in [0, +\infty), \ \operatorname{Re} z_0 > 0, 
$$
is a sum of $N \leq \dim H_n$ simple fractions:
\begin{align*}
    \Phi_n(t\omega, z_0) = \sum_{j=1}^N \frac{c_j}{z_0 + t \gamma_j}.
\end{align*}
The nodes $\gamma_j$ belong to the spectrum $\sigma(\omega \cdot \mathbf{X}^{(n)}) \subset [0, +\infty)$ and the coefficients $c_j \geq 0$ depend on $\omega$. Setting $z_0 = 1$ yields a standard Stieltjes function of one variable arising from finite point masses. Thus, we refer to this property of being a finite sum of simple fractions along each ray in $\Gamma$ as a \emph{directional Stieltjes sum}.

	We return now to the fixed point $\lambda \in \operatorname{int} \Gamma$ and evaluate the partial derivatives of $\Phi_n(p, p_0)$ with respect to the $p$ and $p_0$ variables, at the point $(\lambda, 1)$.

 As a first step we note that, for every $n\geq 1$,
	\begin{align}\label{Eq:inv}
		(1 + \lambda \cdot \mathbf{X}^{(n)}) \left(\frac{1}{1+ \lambda \cdot x} \right) = \frac{1}{1+ \lambda \cdot x} + \pi_n  \left(\frac{\lambda \cdot x}{1+ \lambda \cdot x} \right)  =  \pi_n 1, 
	\end{align}
since $\frac{1}{1+ \lambda \cdot x} \in L_1 \subseteq L_n$. So, 
\begin{align*}
    \Phi_n(\lambda, 1) = \langle \frac{1}{1+ \lambda \cdot x}, \pi_n 1  \rangle 
                       = \int_{\Gamma^*} \frac{d\mu(x)}{1+ \lambda \cdot x} = \Phi(\lambda, 1).
\end{align*}
Again, for $0 \leq j \leq d$,
	\begin{align*}
		\partial_{p_j} \Phi_n (p, p_0)  = -  \langle (p_0 + p \cdot \mathbf{X}^{(n)})^{-1}  X^{(n)}_j   (p_0 + p \cdot \mathbf{X}^{(n)})^{-1} \ \pi_n 1, \ \pi_n 1  \rangle.  
	\end{align*}
Thus, writing $x_0 =1$, for $0 \leq j \leq d$ we get 
	\begin{align*}
		\partial_{p_j} \Phi_n|_{(p, p_0) =(\lambda, 1)} &= -  \langle (1 + \lambda \cdot \mathbf{X}^{(n)})^{-1}  X^{(n)}_j   (1 + \lambda \cdot \mathbf{X}^{(n)})^{-1} \pi_n 1, \pi_n 1 \rangle \\
		& = -  \langle  X^{(n)}_j   \frac{1}{1 + \lambda \cdot x}, \ \frac{1}{1 + \lambda \cdot x}  \rangle \ \ \ [\text{ by } \eqref{Eq:inv}] \\
		&= -  \langle  \pi_n  \left(\frac{x_j}{1 + \lambda \cdot x}\right), \ \frac{1}{1 + \lambda \cdot x} \rangle \ \ \ [\text{ as } \frac{1}{1+\lambda \cdot x} \in L_1 \subseteq L_n]  \\
		& = - \int_{\Gamma^*} \frac{x_j}{(1 + \lambda \cdot x)^2} d\mu(x) = \partial_{p_j} \Phi|_{(p, p_0) =(\lambda, 1)}.
	\end{align*}
	
	Further, for $n \geq 2$ and $ 0 \leq k \leq d$,
	\begin{align}\label{Eq:inv-2}
	    (1+ \lambda \cdot \mathbf{X}^{(n)}) \left(\frac{x_k}{(1+ \lambda \cdot x)^2}\right) 
	= \pi_n \left( \frac{x_k (1+ \lambda \cdot x)}{(1+ \lambda \cdot x)^2} \right) = \pi_n \left( \frac{x_k}{1+ \lambda \cdot x}\right).
	\end{align}
	Thus,
	\begin{align*}
		& \partial_{p_k} \partial_{p_j} \Phi_n|_{(p, p_0)=(\lambda, 1)} \\
        & =  \langle (1+ \lambda \cdot \mathbf{X}^{(n)})^{-1} X^{(n)}_k (1+ \lambda \cdot \mathbf{X}^{(n)})^{-1} X^{(n)}_j (1+ \lambda \cdot \mathbf{X}^{(n)} )^{-1} \ \pi_n 1, \ \pi_n 1  \rangle  \\
		 & \ +  \langle (1+ \lambda \cdot \mathbf{X}^{(n)})^{-1} X^{(n)}_j (1+ \lambda \cdot \mathbf{X}^{(n)})^{-1} X^{(n)}_k (1+ \lambda \cdot \mathbf{X}^{(n)})^{-1} \ \pi_n 1, \ \pi_n 1  \rangle \\ 
		&=  \langle X^{(n)}_k (1+ \lambda \cdot \mathbf{X}^{(n)} )^{-1} X^{(n)}_j \left(\frac{1}{1+ \lambda \cdot x} \right), \ \frac{1}{1+ \lambda \cdot x}  \rangle  \\
		& \ +  \langle  X^{(n)}_j (1+ \lambda \cdot \mathbf{X}^{(n)})^{-1} X^{(n)}_k \left(\frac{1}{1+ \lambda \cdot x}\right), \ \frac{1}{1+ \lambda \cdot x}  \rangle \ [\text{ by } \eqref{Eq:inv}]\\ 
		&=  \langle  (1+ \lambda \cdot \mathbf{X}^{(n)} )^{-1} \pi_n \left(\frac{x_j}{1+ \lambda \cdot x} \right), \ \pi_n \left( \frac{x_k}{1+ \lambda \cdot x} \right) \rangle  \\
		&  \ +  \langle   (1+ \lambda \cdot \mathbf{X}^{(n)})^{-1} \pi_n \left(\frac{x_k}{1+ \lambda \cdot x}\right), \ \pi_n \left( \frac{x_j}{1+ \lambda \cdot x} \right)  \rangle  \\
		&= \langle  \frac{x_j}{(1+ \lambda \cdot x)^2} , \ \frac{x_k}{1+ \lambda \cdot x} \rangle  +  \langle   \frac{x_k}{(1+ \lambda \cdot x)^2}, \ \frac{x_j}{1+ \lambda \cdot x}  \rangle \ [\text{ by } \eqref{Eq:inv-2}] \\ 
		&= 2 \int_{\Gamma^*} \frac{x_j x_k}{(1+ \lambda \cdot x)^3} d\mu(x) = \partial_{p_k} \partial_{p_j} \Phi|_{(p, p_0)=(\lambda, 1)}.
	\end{align*}

	Furthermore, following the arguments of \eqref{Eq:inv} and \eqref{Eq:inv-2}, we see that
    for $\alpha \in \mathbb{N}_0^d$ and $j \geq 1$ with $|\alpha| + j\leq n$,
	$$
	(1+ \lambda \cdot \mathbf{X}^{(n)} )^{-1} \pi_n \left( \frac{x^\alpha}{ (1+ \lambda \cdot x)^{|\alpha|+j-1 }} \right) = \left( \frac{x^\alpha}{ (1+ \lambda \cdot x)^{|\alpha|+j}}\right).
	$$
    Together with this fact, repeated application of the above differentiation procedure yields that, for every $n \geq 1$,
	$$ \partial^j_{p_0} \partial^\alpha_p \Phi_n|_{(p, p_0)= (\lambda, 1)} = \partial^j_{p_0} \partial^\alpha_p \Phi|_{(p, p_0)= (\lambda, 1)}, \ \ |\alpha|+j \leq n.$$
	
	In conclusion, we combine the preceding jet interpolation construction into a unified statement.
	\begin{theorem} \label{Thm:S-F-approximation}
    Let $\Gamma \subset {\mathbb R}^d$ be an acute closed convex solid cone and let $\Phi$ be the Stieltjes-Fantappi\`e transform of a finite positive measure on $\Gamma^*$, given by
    $$
    \Phi(z, z_0) = \int_{\Gamma^*} \frac{d\mu(x)}{z_0 + z\cdot x}, \ \ (z, z_0) \in T_{\Gamma \times \mathbb{R}_+}.
    $$
    Fix a point $(\lambda, 1)$ in $\operatorname{int} \Gamma \times \mathbb{R}_+$. Then there exists a sequence of rational Herglotz-Nevanlinna functions $\{ \Phi_n \}_n$ defined on the tube domain $T_{\Gamma \times \mathbb{R}_+}$ such that $\Phi_n$ converges to $\Phi$ uniformly on compact subsets of $T_{\Gamma \times \mathbb{R}_+}$. 
        The Taylor coefficients of $\Phi_n$ and $\Phi$ are the same up to order $n$ at the point $(\lambda, 1)$. Moreover, each $\Phi_n$ is a directional Stieltjes sum.
	\end{theorem}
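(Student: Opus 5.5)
The construction, the Herglotz--Nevanlinna property, the directional Stieltjes sum structure and the jet matching up to order $n$ are already established in the discussion preceding the statement. What remains is the convergence $\Phi_n\to\Phi$ uniformly on compact subsets of $T_{\Gamma\times\mathbb{R}_+}$. I would follow the scheme of the proof of Theorem~\ref{th:Lap conv}: a normal family argument, followed by identification of the limit through one-variable slices.

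\textbf{Step 1 (local uniform bounds).} On a compact $K\subset T_{\Gamma\times\mathbb{R}_+}$ we have $\operatorname{Re} z_0\geq \delta>0$ and $\operatorname{Re} z\in\Gamma$. Since $W(\mathbf{X}^{(n)})\subseteq\Gamma^*$, the real part of the operator $z_0+z\cdot\mathbf{X}^{(n)}$ is at least $\delta$. Hence
\[
\bigl\|(z_0+z\cdot \mathbf{X}^{(n)})^{-1}\bigr\|\le \delta^{-1},
\qquad
|\Phi_n(z,z_0)|\le \delta^{-1}\|\pi_n1\|^2\le \delta^{-1}\mu(\Gamma^*).
\]
By Montel's theorem, every subsequence of $(\Phi_n)$ has a further subsequence converging locally uniformly to some holomorphic $G$. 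It therefore suffices to show $G=\Phi$.

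\textbf{Step 2 (identifying the limit).} I would restrict to the real slice $t\mapsto \Phi_n(t\omega+\lambda,1)$, or more simply work near the point $(\lambda,1)$, which is an \emph{interior} point of the tube here. This is unlike the Laplace case, where $\lambda$ lay on the boundary of $T_{\lambda+\Gamma}$. Choose a polydisc $D$ around $(\lambda,1)$ with compact closure in the tube. On $D$ the $\Phi_n$ are uniformly bounded, so by the Cauchy estimates their Taylor coefficients at $(\lambda,1)$ converge to those of $G$. Each fixed coefficient of $\Phi_n$ equals the corresponding coefficient of $\Phi$ as soon as $n$ exceeds its order. Hence $G$ and $\Phi$ have the same Taylor series at $(\lambda,1)$, and the identity theorem on the connected tube gives $G=\Phi$. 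Since every subsequence then has a further subsequence converging to $\Phi$, the whole sequence converges.

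\textbf{Main obstacle.} The delicate point is the jet matching for arbitrary orders $|\alpha|+j\le n$, which the text only sketched. I would prove it by induction on the word length. The key identity
\[
(1+\lambda\cdot\mathbf{X}^{(n)})^{-1}\pi_n\Bigl(\frac{x^\alpha}{(1+\lambda\cdot x)^{|\alpha|+j-1}}\Bigr)=\frac{x^\alpha}{(1+\lambda\cdot x)^{|\alpha|+j}}
\]
lets each resolvent--multiplier pair in the expansion of a derivative act as true multiplication by $x_k/(1+\lambda\cdot x)$, as long as the total degree stays within $L_n$. One then splits each word into two halves acting on the two copies of $\pi_n1$, as in the Laplace case, so that every term reduces to the corresponding integral against $\mu$.

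The convergence argument itself only needs matching of each fixed order for large $n$. Even a weaker matching range, such as order $\lfloor n/2\rfloor$, would therefore suffice for Step 2.
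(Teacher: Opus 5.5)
Your proposal is correct and follows the paper's argument, which the paper gives only as a one-line sketch: a normal family argument on $T_{\Gamma\times\mathbb{R}_+}$, with the limit identified through the jet matching at the interior point $(\lambda,1)$ and then the identity theorem. The one minor difference is how you get normality: you bound $\Phi_n$ directly via $\operatorname{Re}(z_0+z\cdot\mathbf{X}^{(n)})\geq \operatorname{Re} z_0$, giving $|\Phi_n(z,z_0)|\le \mu(\Gamma^*)/\operatorname{Re} z_0$, whereas the paper uses the Herglotz--Nevanlinna property together with the normalization $\Phi_n(\lambda,1)=\Phi(\lambda,1)$; your bound is slightly more elementary and avoids any Harnack-type or Cayley-transform step.
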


		We have already established in the discussion preceding the statement of the theorem that the Taylor coefficients of $\Phi_n$ and $\Phi$ are the same up to order $n$ at $(\lambda, 1)$. The approximation part of the theorem follows from a normal family argument, since $\Phi_n(\lambda, 1)=\Phi(\lambda, 1)$ and $\Phi_n$ is in the Herglotz-Nevanlinna class of $T_{\Gamma \times \mathbb{R}_+}$, for every $n$. 

      We conclude this section with a note concerning the rate of convergence of our approximation algorithm, relying on the {\em pluricomplex Green function} of a tube domain whose existence is ensured by its biholomorphic equivalence with a bounded domain. More precisely, the tube domain over a {\it symmetric cone} carries a generalized Cayley transform to a bounded domain, see \cite[Chapter X]{Faraut-Koranyi}.
\begin{remark}
By post composing a suitably chosen Cayley transform from $\mathbb{H}^+_R$ to $\mathbb{D}$ mapping $\Phi(\lambda)=\Phi_n(\lambda)$ to $0$, with both $\Phi_n$ and $\Phi$, we can produce uniformly bounded functions $\tilde{\Phi}_n$ and $\tilde{\Phi}$, respectively in $T_{\Gamma \times \mathbb{R}_+}$. It is easy to verify that the local Taylor coefficient-matching of $\tilde{\Phi}_n$ and $\tilde{\Phi}$ up to order $n$ at the interior point $(\lambda, 1)$, of $T_{\Gamma \times \mathbb{R}_+}$ continues to hold as the same is true for $\Phi_n$ and $\Phi$.
This forces a global geometric error bound across the entire tube domain via the pluricomplex Green's function $\mathcal{G}$ associated to the domain $T_{\Gamma \times \mathbb{R}_+}$ having logarithmic pole at $(\lambda, 1)$ and defined as:
$$\mathcal{G}((z, z_0), (\lambda, 1)) \bydef \sup  v(z, z_0).$$
The supremum is taken over all non-positive plurisubharmonic (psh) functions $v$ on $T_{\Gamma \times \mathbb{R}_+}$ such that $v$ has a logarithmic pole at $(\lambda, 1)$.
For more details on pluricomplex Green's function, see \cite[Chapter 6]{Klimek}.

The error function:
$$E_n(z, z_0) \bydef \tilde{\Phi}(z, z_0) - \tilde{\Phi}_n(z, z_0), \ \ (z, z_0)\in T_{\Gamma \times \mathbb{R}_+},$$
is uniformly bounded by $2$ in $T_{\Gamma \times \mathbb{R}_+}$.
Also, $E_n(z, z_0)$ has a local zero of order $n$ at $(\lambda, 1)$ i.e.,
$$E_n(z, z_0) = O(\|(z, z_0) - (\lambda, 1)\|^n) \quad \text{as } (z, z_0) \to (\lambda, 1).$$
Therefore the function, given by
$$v(z, z_0) = \frac{1}{n} \log \left| \frac{E_n(z, z_0)}{2} \right|, \quad (z, z_0)\in T_{\Gamma \times \mathbb{R}_+},$$
is a non-positive psh function in $T_{\Gamma \times \mathbb{R}_+}$ having a logarithmic pole at $(\lambda, 1)$. Thus,
$$\frac{1}{n} \log \left| \frac{E_n(z, z_0)}{2} \right| \le \mathcal{G}((z, z_0), (\lambda, 1)),$$
and so,
$$|\tilde{\Phi}(z, z_0) - \tilde{\Phi}_n(z, z_0)| \le 2 \left( e^{\mathcal{G}((z, z_0), (\lambda, 1))} \right)^n$$

Reverting back to the original error function $\Phi(z, z_0) - \Phi_n(z, z_0)$ via the inverse Cayley transform from $\mathbb{D}$ to $\mathbb{H}^+_R$, we observe that on any compact subset $S \subset T_{\Gamma \times \mathbb{R}_+}$, 
$$|\Phi(z, z_0) - \Phi_n(z, z_0)| \le C_S |\tilde{\Phi}(z, z_0) - \tilde{\Phi}_n(z, z_0)| \le 2 C_S \left( e^{\mathcal{G}((z, z_0), (1, \lambda))} \right)^n,$$
for some constant $C_S \geq 0$.
If we define $q_S = \sup_{(z, z_0) \in S}  e^{\mathcal{G}((z, z_0), (1, \lambda))}$, then because $\mathcal{G}$ is strictly negative in the interior of the domain, $q_S < 1$.
Taking the supremum over $S$ yields a strict geometric convergence rate:
$$\|\Phi - \Phi_n\|_{S, \infty} \le 2 C_S \cdot (q_S)^n.$$
\end{remark}

        \vspace{2mm}
        
\subsection{Analytic duality and directional Stieltjes sums}\label{sub:analytic dual}
In complete analogy to the Laplace transform framework and Wigner distributions, the approximants $\Phi_n$ (as in Theorem~\ref{Thm:S-F-approximation}) for the Stieltjes-Fantappi\`e transfroms of positive measures 
emerge as Fantappi{\`e} transforms of analytic functionals carried by the convex hull of the joint numerical range of the $d$-tuple $\mathbf{X}^{(n)}$.
In the end the associated Wigner distributions serve as representatives of these analytic functionals.

 To be more specific, let us first consider the convex hull of $W(\mathbf{X}^{(n)})$, denoted by $\operatorname{conv} W(\mathbf{X}^{(n)})$, which is a convex compact set in $\C^d$. Due to compactness, we can identify $\operatorname{conv} W(\mathbf{X}^{(n)})$ as a compact subset of the $d$-dimensional complex projective space $\C\mathbb P^d$ via the following identification:
  \beas
  \operatorname{conv} W(\mathbf{X}^{(n)})&\rightarrow& \C\mathbb P^d\\
  (x_1,x_2,\cdots,x_d)&\rightarrow& [1:x_1:x_2:\cdots:x_d],
  \eeas
  where $[z_0:z_1:\cdots:z_d]$ denotes the homogeneous coordinates in $\C\mathbb P^d$. 
  Furthermore, let us consider the domain
  \[
\Omega_{W(\mathbf{X}^{(n)})}=\left\{[z_0:z]\in\C\mathbb P^{d}: z_0+\langle z, x\rangle\neq 0,\, \forall x\in \operatorname{conv} W(\mathbf{X}^{(n)}) \right\}.
  \]
  Then $\Omega_{W(\mathbf{X}^{(n)})}^*$ is the dual complement of the compact set $\operatorname{conv}W(\mathbf{X}^{(n)})$, as in \eqref{eq:projective dual}, and due to convexity of the set $\operatorname{conv}W(\mathbf{X}^{(n)})$, $\Omega_{W(\mathbf{X}^{(n)})}^*=\operatorname{conv}W(\mathbf{X}^{(n)})$. Here we are viewing $\operatorname{conv}W(\mathbf{X}^{(n)})$ as a subset of $\C\mathbb P^d$ as mentioned above. Also, by construction, the approximant $\Phi_n$ is $-1$-homogeneous holomorphic function in $\Omega_{W(\mathbf{X}^{(n)})}$. Hence, $\Phi_n$ can be viewed as a section of the tautological holomorphic line bundle $\mathcal O_{-1}\ltt\Omega_{W(\mathbf{X}^{(n)})}\rtt$; see \cite[Section~3.2]{Andersson Passare book} for a definition. Hence, by the Aizenberg--Martineau duality theorem (\cite[Theorem~3.5.3]{Andersson Passare book}), there exists an analytic functional $\Psi_{n}\in\mathcal O'(\operatorname{conv} W(\mathbf{X}^{(n)}))$ such that for $[z_0:z]\in \Omega_{W(\mathbf{X}^{(n)})}$,
\beas
\Phi_n([z_0:z])&=&\Psi_n\ltt[1:x]\rightarrow\frac{1}{\langle [1:x],[z_0:z]\rangle },\,x\in W(\mathbf{X}^{(n)})\rtt,
\eeas
Passing to the standard affine chart, the projective pairing simplifies to $ \langle [1:x],[z_0:z]\rangle = z_0 + \langle z,x\rangle$, and hence,
$$
\Phi_n(z_0,z) = \Psi_n \left( x \mapsto \frac{1}{z_0 + \langle z,x\rangle},\,x\in W(\mathbf{X}^{(n)}) \right),
$$
which shows that the Fantappi{\`e} transform of $\Psi_n$ is precisely the approximating rational function $\Phi_n$.

Given the original definition (\ref{Phi}) of the rational function $\Phi_n$, we infer that the analytic functional $\Psi_n$ is the restriction (in the sense of linear functionals) of Wigner's distribution associated to the $d$-tuple $\mathbf{X}^{(n)}$ and the vector $\pi_n 1$.  Note that, in general, a tempered distribution of compact support is not determined by its restriction to the space of holomorphic functions.

\section{Matrix-valued completely monotone functions} \label{Sec:Matrix-CM}
For a fix $m \in \mathbb{N}$, {\em we say that a function $F: \Gamma \rightarrow M_m(\mathbb{R})$ is $M_m(\mathbb{R})$-valued completely monotone function if it is smooth in $\operatorname{int} \Gamma$ and the complete monotonicity sign property \eqref{Eq:CM-cond} holds in the sense of usual matrix-ordering}.

Let $A = (A_1, \ldots,A_d)$ be a tuple of real symmetric $m \times m$ matrices. The two transforms which
produce completely monotone functions in the scalar case have a direct analogue in the matrix-valued framework. Specifically, the Laplace transform:
$$ {\mathcal L}(A)(p) \bydef \exp (- p \cdot A),\ \ p \in \Gamma,$$
and the Stieltjes-Fantappi\`e transform:
$$ \Phi(A)(p, p_0) \bydef (p_0 + p\cdot A)^{-1},\ \ p_0 > 0, \ p\in \operatorname{int} \Gamma,$$
whenever they are well-defined. They are the key protagonists in our constructive approximation scheme illustrated in Subsection \ref{Subsec:Laplace-interpolation} and \ref{Subsec:Fantappie-interpolation}. Thus it is natural to look for a criterion to have the approximants in the respective class of complete monotone functions.
Aiming at such a criterion, we provide necessary and sufficient conditions involving the partial derivatives of these matrix-valued transforms so that the $d$-tuple becomes commutative. Before stating that criterion, we need a purely algebraic lemma which is of independent interest.
\begin{lemma}\label{Lem:comm}
    If $A_1$ and $A_2$ are two real $m\times m$ PSD matrices and the Weyl-symmetrized words $C_{k, 1}(A_1, A_2)$ are PSD for all $k\in \mathbb{N}$, then $A_1$ commutes with $A_2$. By Weyl-symmetrized word $C_{k, 1}(A_1, A_2)$, we mean the coefficient of $s^k t$ in the expansion of $(sA_1 + tA_2)^{k+1}$ i.e., 
    $$
    C_{k, 1}(A_1, A_2) = \sum_{j=0}^k A_1^{k-j} A_2 A_1^j.
    $$
\end{lemma}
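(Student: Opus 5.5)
The plan is to diagonalize $A_1$ and read off the entries of $C_{k,1}(A_1,A_2)$ in that basis. The PSD hypothesis on each $C_{k,1}$ will then force the off-diagonal blocks of $A_2$, between distinct eigenspaces of $A_1$, to vanish as $k\to\infty$.

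\emph{Step 1 (change of basis).} Since $A_1$ is real symmetric and PSD, choose an orthogonal $U$ with $U^TA_1U=D=\operatorname{diag}(a_1,\dots,a_m)$, where all $a_i\geq 0$. Put $B=U^TA_2U$. Both positivity of $C_{k,1}$ and commutativity are invariant under this orthogonal conjugation, so we may assume $A_1=D$ and $A_2=B$. A direct computation gives
$$
\bigl(C_{k,1}(D,B)\bigr)_{il}=B_{il}\,h_k(a_i,a_l),\qquad h_k(a,b)=\sum_{j=0}^k a^{k-j}b^j .
$$
That is, $C_{k,1}$ is the Schur product of $B$ with the matrix $[h_k(a_i,a_l)]$. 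In particular, the diagonal entries are $(k+1)a_i^kB_{ii}$.

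\emph{Step 2 (the key estimate).} Fix indices $i,l$ with $a_i>a_l\geq 0$. The $2\times 2$ principal submatrix of the PSD matrix $C_{k,1}$ on $\{i,l\}$ is PSD, so its determinant is nonnegative:
$$
(k+1)^2a_i^k a_l^k B_{ii}B_{ll}\;\geq\; B_{il}^2\,h_k(a_i,a_l)^2 .
$$
All terms of $h_k(a_i,a_l)$ are nonnegative and one of them is $a_i^k$, so $h_k(a_i,a_l)\geq a_i^k>0$. We then split into two cases.
\begin{itemize}
\item If $a_l=0$, then for $k\geq 1$ the left-hand side vanishes, which gives $B_{il}=0$.
\item If $a_l>0$, dividing by $a_i^{2k}$ yields $B_{il}^2\leq (k+1)^2 (a_l/a_i)^k B_{ii}B_{ll}$. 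The right-hand side tends to $0$ as $k\to\infty$ because $a_l/a_i<1$, so again $B_{il}=0$.
\end{itemize}

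\emph{Step 3 (conclusion).} By Step 2, $B_{il}=0$ whenever $a_i\neq a_l$. Hence $B$ is block diagonal with respect to the eigenspace decomposition of $D$, and $D$ acts as a scalar on each block. Therefore $DB=BD$, and conjugating back gives $A_1A_2=A_2A_1$.

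The only genuinely nontrivial point is Step 2: we must choose the $2\times2$ minor and compare the growth of $h_k(a_i,a_l)^2$, which is at least $a_i^{2k}$, against $(k+1)^2(a_ia_l)^k$. The polynomial factor $(k+1)^2$ is beaten by the geometric decay $(a_l/a_i)^k$. It is therefore essential to use the hypothesis for infinitely many $k$, since a single $k$ does not suffice. The assumption $A_2\geq 0$ is used only to make sense of the inequality; in fact only the finiteness of $B_{ii}B_{ll}$ is needed.
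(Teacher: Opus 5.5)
Your proposal is correct and follows essentially the same route as the paper. You diagonalize $A_1$, observe that $(C_{k,1})_{il}=B_{il}\,h_k(a_i,a_l)$, and apply the $2\times 2$ principal minors on $\{i,l\}$, split into the cases $a_l=0$ and $a_l>0$ and combined with a $k\to\infty$ comparison, to force $B_{il}=0$ whenever $a_i\neq a_l$. The differences are cosmetic: you use the bound $h_k(a_i,a_l)\geq a_i^k$ and the decay of $(k+1)^2(a_l/a_i)^k$ rather than evaluating the geometric sum, and you treat the zero-eigenvalue case with the same minor rather than the zero-diagonal-row property of PSD matrices.
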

\begin{proof}
By the hypothesis, $A_1$ is PSD and hence, $A_1$ is diagonalizable. Let $\mathcal{U}=\{u_1, \dots, u_m\}$ be an orthonormal eigen-basis for $A_1$, with respective non-negative eigenvalues $\lambda_1, \dots, \lambda_m \ge 0$. We write $C_k$ as a short hand notation for $C_{k, 1}(A_1, A_2)$. We represent $A_2$ and $C_k$ in the basis $\mathcal{U}$ and denote their $(u, v)$-entry by $b_{uv}$ and $(C_k)_{uv}$, respectively, for each $k\geq 1$. Then,
$$
(C_k)_{uv} = \sum_{j=0}^{k} \lambda_u^{k-j} b_{uv} \lambda_v^j = b_{uv} \sum_{j=0}^{k} \lambda_u^{k-j} \lambda_v^j.
$$
Evaluating this geometric series gives two cases: 
\begin{itemize}
    \item[(i)] if $\lambda_u = \lambda_v = \lambda$:
$$(C_k)_{uv} = (k+1) \lambda^k b_{uv}.$$

\item[(ii)] if $\lambda_u \neq \lambda_v$:
$$(C_k)_{uv} = b_{uv} \frac{\lambda_u^{k+1} - \lambda_v^{k+1}}{\lambda_u - \lambda_v}.$$
\end{itemize}

Because $C_k$ is PSD, every $2 \times 2$ principal minor of the matrix $((C_k)_{u,v})$ must have non-negative determinant. For any two indices $u$ and $v$, this requires:
$$(C_k)_{uu} (C_k)_{vv} \ge (C_k)_{uv}^2.$$

Assuming $\lambda_u \neq \lambda_v$, we now prove that $b_{uv} = 0$. 
For that purpose, substituting our formulae for the entries into this inequality yields:
\begin{align}\label{Eq:matrix-comm}
    (k+1)^2 (\lambda_u \lambda_v)^k b_{uu} b_{vv} \ge b_{uv}^2 \left( \frac{\lambda_u^{k+1} - \lambda_v^{k+1}}{\lambda_u - \lambda_v} \right)^2.
\end{align}
 We handle this in two sub-cases based on whether the eigenvalues are zero.
\begin{itemize}
    \item[\text{Case 1.}] One eigenvalue is zero.

Suppose $\lambda_v = 0$ and $\lambda_u > 0$. Then for $k \ge 1$, the diagonal entry $(C_k)_{vv} = 0$.
A fundamental property of PSD matrices is that if a diagonal entry is zero, its entire corresponding row and column must also be zero. Therefore, $(C_k)_{uv} = 0$.
Looking at our formula for $(C_k)_{uv}$ when $\lambda_v = 0$, we have,
$$
(C_k)_{uv} = b_{uv} \frac{\lambda_u^{k+1} - 0}{\lambda_u - 0} = b_{uv} \lambda_u^k.
$$
Since $\lambda_u > 0$ and $(C_k)_{uv} = 0$, it immediately follows that $b_{uv} = 0$.

\item [\text{Case 2.}] Both eigenvalues are strictly positive.

Suppose $\lambda_u > \lambda_v > 0$. We can divide both sides of the inequality \eqref{Eq:matrix-comm} by $(\lambda_u \lambda_v)^k$ to get
$$
(k+1)^2 b_{uu} b_{vv} \ge b_{uv}^2 \left( \frac{(\lambda_u/\lambda_v)^{(k+1)/2} - (\lambda_v/\lambda_u)^{(k+1)/2}}{(\lambda_u/\lambda_v)^{1/2} - (\lambda_v/\lambda_u)^{1/2}} \right)^2.
$$
Now take limit as $k \to \infty$.
The left-hand side, $(k+1)^2 b_{uu} b_{vv}$, has polynomial growth.
On the right-hand side, because $\lambda_u > \lambda_v$, the ratio $\lambda_u/\lambda_v > 1$. Therefore, the term $(\lambda_u/\lambda_v)^{(k+1)/2}$ dominates, and the entire right-hand side grows exponentially at the rate of $(\frac{\lambda_u}{\lambda_v})^k$.

If $b_{uv} \neq 0$, the exponential growth of the right-hand side will eventually surpass the polynomial growth of the left-hand side, violating the PSD inequality for sufficiently large $k$. This will lead to a contradiction as the matrix $C_k$ is PSD for all $k$. Hence, we must have $b_{uv} = 0$.
\end{itemize}
Therefore, $b_{uv} = 0$ whenever $\lambda_u \neq \lambda_v$. This means that $A_2$ is block-diagonal with respect to the distinct eigenspaces of $A_1$. Consequently, $A_1$ and $A_2$ are simultaneously diagonalizable, meaning they commute.
\end{proof}

  Now, we consider the $d$-tuple $A=(A_1, \dots, A_d)$ of $n \times n$ real symmetric matrices and the associated linear pencil
$$L(x) = x\cdot A, \ \ \text{ for } x\in \mathbb{R}^d.$$

\begin{theorem}\label{Thm:operator-CM}
    Let the matrix $L(p)$ be PSD for every vector $p$ lying on an extreme ray of $\Gamma$. Suppose there exists a set of $d$ linearly independent vectors $v^{(1)}, \dots, v^{(d)}$ chosen from the extreme rays of $\Gamma$ such that for every pair of indices $i, j \in \{1, \dots, d\}$, the Weyl-symmetrized words $C_{k, 1}(L(v^{(i)}), L(v^{(j)}))$ are PSD for all $k \ge 1$. Then $A_1, \dots, A_d$ mutually commute, and all their joint eigenvalues lie in $\Gamma^*$.
\end{theorem}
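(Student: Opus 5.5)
The plan is to apply Lemma~\ref{Lem:comm} pairwise to the matrices $B_i \bydef L(v^{(i)})$, $1\leq i\leq d$, and then transfer the commutativity back to $A_1,\dots,A_d$ by linear algebra.

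\textbf{Step 1: the $B_i$ commute.} Each $v^{(i)}$ lies on an extreme ray of $\Gamma$, so by hypothesis $B_i=L(v^{(i)})$ is a real PSD matrix. For each pair $i,j$ the Weyl-symmetrized words $C_{k,1}(B_i,B_j)$ are PSD for all $k\geq 1$. For $k=0$ we have $C_{0,1}=B_j$, which is PSD as well. Lemma~\ref{Lem:comm} then gives $B_iB_j=B_jB_i$ for every pair.

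\textbf{Step 2: the $A_j$ commute.} Let $V$ be the invertible $d\times d$ matrix with columns $v^{(1)},\dots,v^{(d)}$. By linearity of the pencil, $B_i=\sum_{l} v^{(i)}_l A_l$. Inverting $V$ expresses each $A_l$ as a real linear combination of $B_1,\dots,B_d$. Linear combinations of mutually commuting matrices commute, so $A_1,\dots,A_d$ mutually commute. Being commuting real symmetric matrices, they are simultaneously orthogonally diagonalizable. Take a common orthonormal eigenbasis $\{w_r\}$ with joint eigenvalues $\theta^{(r)}=(\theta^{(r)}_1,\dots,\theta^{(r)}_d)$, where $A_l w_r=\theta^{(r)}_l w_r$.

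\textbf{Step 3: the joint eigenvalues lie in $\Gamma^*$.} For any $p$ we have $L(p)w_r=(p\cdot\theta^{(r)})w_r$. If $p$ lies on an extreme ray of $\Gamma$, then $L(p)$ is PSD, so $p\cdot\theta^{(r)}\geq 0$. Since $\Gamma$ is a closed acute (pointed) convex cone, the Krein--Milman theorem for pointed closed cones gives that $\Gamma$ is the closed convex conic hull of its extreme rays. Hence $p\cdot\theta^{(r)}\geq 0$ for all $p\in\Gamma$, that is, $\theta^{(r)}\in\Gamma^*$. The same argument shows $L(p)$ is PSD for all $p\in\Gamma$.

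\textbf{Expected obstacle.} The substantive part is Step 1, and it is already handled by Lemma~\ref{Lem:comm}. The remaining subtlety is the appeal to the fact that a pointed closed cone is generated by its extreme rays. I would cite this as standard convexity; in finite dimensions acuteness guarantees it via a compact base.
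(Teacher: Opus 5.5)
Your proposal is correct and follows the paper's proof essentially step for step: pairwise commutativity of the $L(v^{(i)})$ via Lemma~\ref{Lem:comm}, transfer of commutativity to the $A_m$ by inverting the change of basis, and the joint-eigenvalue argument using that $\Gamma$ is generated by its extreme rays. Your justification of that last fact (Krein--Milman applied to a compact base of the pointed cone) is more accurate than the paper's appeal to ``the definition of a closed convex cone.''
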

\begin{proof}
First we prove pairwise commutativity on the basis $\{v^{(1)},\dots, v^{(d)}\}.$
Towards that let $X = L(v^{(i)})$ and $Y = L(v^{(j)})$ be the matrices associated with any two vectors from our chosen basis.

Since $v^{(i)}$ and $v^{(j)}$ are both extreme rays, by our assumptions, $X$ and $Y$ are both PSD as well as $C_{k, 1}(X, Y)$ is PSD for every $k\geq 1$. So, by our Lemma \ref{Lem:comm}, $X$ commutes with $Y$. Thus,
$$
L(v^{(i)}) L(v^{(j)}) = L(v^{(j)}) L(v^{(i)}),
$$
for all pairs $i, j \in \{1, \dots, d\}$.
Because the vectors $v^{(1)}, \dots, v^{(d)}$ form a basis for $\mathbb{R}^d$. Let $e_m$ be the $m$-th standard basis vector of $\mathbb{R}^d$ and let
$e_m = \sum_{i=1}^d c_i v^{(i)}$, for some real numbers $c_i$.
So,
$$
A_m = L(e_m) = \sum_{i=1}^d c_i L(v^{(i)}).
$$
Because linear combinations of mutually commuting matrices also commute with one another, any two matrices $A_r$ and $A_m$ commute.

Finally, we show that the joint eigenvalues of $A$ lie in $\Gamma^*$.

Because $A_1, \dots, A_d$ are mutually commuting and symmetric, they share an orthonormal basis of joint eigenvectors. Let $u$ be any joint eigenvector of $A$, and let $\lambda = (\lambda_1, \dots, \lambda_d) \in \mathbb{R}^d$ be its corresponding joint eigenvalue, such that $A_j u = \lambda_j u$ for all $j$.

Now, let $p$ be an arbitrary vector in the cone $\Gamma$.
By the definition of a closed convex cone, any vector $p \in \Gamma$ can be expressed as a finite sum of non-negative multiples of the vectors on its extreme rays:
$p = \sum_{m} s_m p^{(m)},$ where $s_m \ge 0$ and $p^{(m)}$ are extreme rays of $\Gamma$. So,
$$
L(p) = \sum_{m} s_m L(p^{(m)}).
$$
By our hypothesis, $L(p^{(m)})$ is PSD for every $m$ and hence, $L(p)$ is PSD matrix for all $p \in \Gamma$. Applying $L(p)$ to our joint eigenvector $u$:
$$
L(p)u = \sum_{i=1}^d p_i (A_i u) = \left( \sum_{i=1}^d p_i \lambda_i \right) u = ( p \cdot \lambda) u.
$$
This implies that $u$ is an eigenvector of $L(p)$ with the eigenvalue $ p\cdot \lambda$.
But $L(p)$ being PSD, $p \cdot \lambda \geq 0$, for any $p \in \Gamma$. Hence, $\lambda \in \Gamma^*$.
\end{proof}

From Subsection \ref{Subsec:Laplace-interpolation}, we recall the compressions $X^{(n)}_j$ of the coordinate multipliers $X_j$ on the real finite dimensional Hilbert space $H_n$, for $n\in \mathbb{N}$ and $u=e^{-\frac{\lambda}{2} \cdot x}$. We consider these compressions which are real symmetric, in the following corollary. 
\begin{corollary} \label{Cor:comm-LT}
Under the assupmtions of Theorem \ref{Thm:operator-CM} with $A_j = X^{(n)}_j$ for $1\leq j \leq d$,  the entire function $\langle \exp (-(z-\lambda) \cdot A) u, u \rangle$
 is a finite positive linear combination of exponentials on the tube domain over the forward cone $\lambda + \Gamma$ i.e., 
 $$
 \langle \exp (-(z-\lambda) \cdot A) u, u \rangle = \sum_{j=1}^{\operatorname{dim} H_n} \gamma_j \exp(-(z-\lambda) \cdot \lambda^{(j)}),
 $$
 where $\gamma_j \geq 0$ and $\lambda^{(j)} \in \Gamma^*$.
\end{corollary}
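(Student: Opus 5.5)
The plan is to invoke Theorem~\ref{Thm:operator-CM} directly and then apply the joint spectral decomposition of a commuting tuple of real symmetric operators. Under the standing assumptions (with $A_j = X^{(n)}_j$), Theorem~\ref{Thm:operator-CM} yields that $X^{(n)}_1,\dots,X^{(n)}_d$ mutually commute and that every joint eigenvalue lies in $\Gamma^*$. Being commuting real symmetric operators on the finite dimensional real Hilbert space $H_n$, they admit a common orthonormal eigenbasis $e_1,\dots,e_N$, $N=\dim H_n$, with $X^{(n)}_k e_j = \lambda^{(j)}_k e_j$ and $\lambda^{(j)}=(\lambda^{(j)}_1,\dots,\lambda^{(j)}_d)\in\Gamma^*$ (repetitions allowed).

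Next, the exponential factorizes in this basis. For every $z\in\mathbb{C}^d$, the operator $(z-\lambda)\cdot A=\sum_k (z_k-\lambda_k)X^{(n)}_k$, extended complex-linearly to the complexification of $H_n$, is diagonal in the basis $\{e_j\}$ with eigenvalue $(z-\lambda)\cdot\lambda^{(j)}$. Hence $\exp(-(z-\lambda)\cdot A)e_j=\exp(-(z-\lambda)\cdot\lambda^{(j)})e_j$, since commutativity makes the power series of the exponential act diagonally. Expanding $u=\sum_j c_j e_j$ with $c_j=\langle u,e_j\rangle\in\mathbb{R}$ then gives
$$\langle \exp(-(z-\lambda)\cdot A)u,u\rangle=\sum_{j=1}^{\dim H_n} c_j^2\exp(-(z-\lambda)\cdot\lambda^{(j)}).$$
The desired representation follows with $\gamma_j=c_j^2\ge 0$. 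As a by-product, $\sum_j\gamma_j=\|u\|^2=F(\lambda)$.

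Finally, I would note that each term is bounded on the tube over $\lambda+\Gamma$. Indeed, $\operatorname{Re}(z-\lambda)\in\Gamma$ and $\lambda^{(j)}\in\Gamma^*$ imply $|\exp(-(z-\lambda)\cdot\lambda^{(j)})|\le 1$. So the formula genuinely exhibits the function as a Laplace transform of the positive atomic measure $\sum_j\gamma_j\delta_{\lambda^{(j)}}$, shifted to the vertex $\lambda$.

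There is essentially no serious obstacle. The only point needing care is bookkeeping: the hypotheses of Theorem~\ref{Thm:operator-CM} (PSD on extreme rays and PSD Weyl-symmetrized words) must be read for the compressions $X^{(n)}_j$. The PSD condition on $L(p)$ for $p\in\Gamma$ is automatic from $W(\boldsymbol{X}^{(n)})\subseteq\Gamma^*$, shown in the proof of Theorem~\ref{th:Lap conv}, so only the word condition is a genuine assumption. The rest is the spectral theorem.
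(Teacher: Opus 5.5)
Your proposal is correct and follows the paper's proof essentially verbatim. Theorem~\ref{Thm:operator-CM} gives commutativity of the $X^{(n)}_j$ and joint eigenvalues in $\Gamma^*$, and a simultaneous orthogonal diagonalization then turns $\langle \exp(-(z-\lambda)\cdot A)u,u\rangle$ into $\sum_j c_j^2\exp(-(z-\lambda)\cdot\lambda^{(j)})$ with $c_j=\langle u,e_j\rangle$. Your side remarks are also correct and make explicit bookkeeping the paper leaves implicit: the PSD condition on $L(p)$ is automatic from $W(\boldsymbol{X}^{(n)})\subseteq\Gamma^*$, and $\sum_j\gamma_j=\|u\|^2=F(\lambda)$.
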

\begin{proof}
By Theorem \ref{Thm:operator-CM}, $A$ becomes a $d$-tuple of commuting real symmetric matrices. So, we can simultaneously orthogonally diagonalize each of the operators in $A$ such that
$$
A_j = U^* \Lambda_j U \ \text{ and } \Lambda_j = \operatorname{diag} (\lambda^{(1)}_j, \ldots, \lambda^{(N)}_j),
$$
where $U$ is the orthogonal matrix made of orthonormal joint eigenbasis of $H_n$, $N= \operatorname{dim} H_n$, for $1\leq j \leq d$. Note that the joint eigenvalues $\lambda^{(j)}=(\lambda^{(j)}_1, \ldots, \lambda^{(j)}_d)$ of $A$  are in $\Gamma^*$. Thus, for $z$ in the tube domain $T_{\lambda+ \Gamma}$,
$$
\langle \exp (-(z-\lambda) \cdot A) u, u \rangle = \sum_{j=1}^N \gamma_j \exp(-(z-\lambda) \cdot \lambda^{(j)}).
$$ 
\end{proof}

Finally, we move on to the Stieltjes-Fantappi\`e side. From subsection \ref{Subsec:Fantappie-interpolation}, we recall the real symmetric operators $X^{(n)}_j$ on the finite dimensional real Hilbert space $H_n$ arising from the coordinate multipliers and $v= \pi_n 1$. In the following corollary, we consider these operators.
\begin{corollary} \label{Cor:comm-FT}
 Under the assumptions of Theorem \ref{Thm:operator-CM} with $A_j = X^{(n)}_j$ for $1\leq j \leq d$, the function
 $$
 \Phi_n(z, z_0)=  \langle (z_0 + z \cdot A)^{-1} v, v \rangle, \ (z, z_0) \in T_{\Gamma \times \mathbb{R}_+},
 $$
 is of the form:
 $$
 \Phi_n(z, z_0) = \sum_{j=1}^{\operatorname{dim} H_n} \frac{\gamma_j}{z_0 + z\cdot \lambda^{(j)}}, \, \, \exists \ \gamma_j \geq 0, \ \lambda^{(j)} \in \Gamma^*.
 $$
\end{corollary}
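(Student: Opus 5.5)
The plan mirrors the proof of Corollary~\ref{Cor:comm-LT}, replacing the exponential by the resolvent of the pencil. First invoke Theorem~\ref{Thm:operator-CM} with $A_j = X^{(n)}_j$, the operators on $H_n$ from Subsection~\ref{Subsec:Fantappie-interpolation}. It yields two facts: the $A_j$ mutually commute, and every joint eigenvalue lies in $\Gamma^*$. Since the $A_j$ are commuting real symmetric operators on the finite dimensional real Hilbert space $H_n$, they can be simultaneously orthogonally diagonalized. Choose an orthonormal joint eigenbasis $e_1,\dots,e_N$, where $N=\dim H_n$, with $A_i e_j = \lambda^{(j)}_i e_j$ and $\lambda^{(j)} = (\lambda^{(j)}_1,\dots,\lambda^{(j)}_d)\in\Gamma^*$.

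Next, for $(z,z_0)\in T_{\Gamma\times\mathbb{R}_+}$, work in the complexification of $H_n$. There the pencil $z_0 + z\cdot A$ acts diagonally: $(z_0+z\cdot A)e_j = (z_0 + z\cdot\lambda^{(j)})e_j$. Each scalar $z_0+z\cdot\lambda^{(j)}$ is nonzero, because
\[
\operatorname{Re}(z_0+z\cdot\lambda^{(j)}) = \operatorname{Re} z_0 + \operatorname{Re} z\cdot \lambda^{(j)} > 0 .
\]
This uses $\operatorname{Re} z_0>0$, $\operatorname{Re} z\in\Gamma$ and $\lambda^{(j)}\in\Gamma^*$. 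Hence the pencil is invertible, which is consistent with its invertibility already noted in Subsection~\ref{Subsec:Fantappie-interpolation}. The inverse is diagonal in the same basis, with entries $(z_0+z\cdot\lambda^{(j)})^{-1}$.

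Finally, expand $v=\pi_n 1 = \sum_j c_j e_j$ with real coefficients $c_j=\langle v,e_j\rangle$. The inner product is bilinear on the real space and extended sesquilinearly, but both $v$ and the $e_j$ are real, so
\[
\Phi_n(z,z_0)=\sum_{j=1}^N \frac{c_j^2}{z_0+z\cdot\lambda^{(j)}} .
\]
Setting $\gamma_j=c_j^2\ge 0$ gives the claimed form. Eigenvalues may repeat across different $j$; this does no harm, since the sum simply has at most $\dim H_n$ terms. Terms with $\gamma_j=0$ may be kept or dropped.

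I expect no real obstacle here. The only points to check are the nonvanishing of the denominators on the whole tube, which comes from the duality $\Gamma$ versus $\Gamma^*$, and the realness of the coefficients, which ensures $\gamma_j\ge0$.
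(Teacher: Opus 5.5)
Your proposal is correct and follows the paper's route. The paper only says the proof mirrors Corollary~\ref{Cor:comm-LT}: apply Theorem~\ref{Thm:operator-CM} to get a commuting tuple with joint eigenvalues in $\Gamma^*$, simultaneously orthogonally diagonalize, and expand $v=\pi_n 1$ in the joint eigenbasis. That is exactly your argument, and your checks that $z_0+z\cdot\lambda^{(j)}\neq 0$ on the tube (via $\Gamma$--$\Gamma^*$ duality) and that $\gamma_j=c_j^2\ge 0$ (by realness of $v$ and of the eigenbasis) supply details the paper leaves implicit.
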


\begin{proof}
    The proof is similar to the previous corollary.
\end{proof}

\section{Commutative dilations and restrictions of matrix tuples} \label{Sec:Comm-dilation}

\subsection{Construction of Gaussian cubature}	On the effective approximation side, we have so far produced a jet interpolation scheme of completely monotone functions, defined on a cone $\Gamma$, by elements which are finitely determined but only directionally complete monotonic. However, Corollary \ref{Cor:comm-LT} and Corollary \ref{Cor:comm-FT} shed some light in obtaining approximants that are nonnegative linear combination of exponential sums, or nonnegative linear combination of simple fractions by imposing certain conditions on the compressed coordinate multipliers.
    
    We outline below how to use the key matrix tuple appearing in this scheme for reaching, at least theoretically, a true interpolation by
	simple, finitely determined completely monotone functions. We closely follow the notes \cite{Dilcub1,Dilcub2}.
	
    Let $\mu$ be a positive measure supported on $\Gamma^\ast$ having moderate growth. Let $M = (M_1, \ldots, X_d)$ be the (unbounded, closed graph) multipliers by the coordinate functions in $L_\mathbb{R}^2(\Gamma^*, \mu)$. Fix a positive integer $n$ and consider a point $\lambda \in {\rm int} \ \Gamma$. We worked  with the Krylov subspace $H_n$ of order $n$ generated by $M$ and the vector $u = e^{- \frac{\lambda}{2} \cdot x}$, i.e., 
	$H_n \subset L_\mathbb{R}^2(\Gamma^*, \mu)$ is generated by the vectors $M^\alpha u, \ \ |\alpha| \leq n.$ Let $P$ denote the orthogonal projection of $L_\mathbb{R}^2(\Gamma^*, \mu)$ onto $H_n$ and define
	the compressed tuple $X$ of symmetric operators $X_j = PM_jP, \ 1 \leq j \leq d$. In particular, whenever $|\alpha|, |\beta| \leq n$ one finds
    \begin{align*}
       \langle X_j X^\alpha u , X^\beta u \rangle &= \langle PM_j P (PMP)^\alpha u , (PMP)^\beta u \rangle   
	  = \langle PM_j M^\alpha u, M^\beta \rangle \\
    & = \langle M_j M^\alpha u, M^\beta u \rangle 
     =  \langle M^{\alpha + 1_j} u, M^\beta u \rangle \\
    & = \int x^{\alpha + \beta + 1_j} e^{- \lambda \cdot x} d\mu(x).
	 \end{align*}
	
	In virtue of Theorem \ref{Thm:RT} of Richter and Tchakalov, there exists a finite point mass positive measure $\nu$, supported on $\Gamma^\ast$, with the property
	$$ \int x^\alpha d\nu(x) = \int x^\alpha e^{- \lambda \cdot x} d\mu(x), \ \ |\alpha| \leq 2n+1.$$ Denote by $\tilde{H}_n$ the Krylov subspace of order $n$ generated by the 
	tuple $N=(N_1, \dots, N_d)$ of coordinate multipliers and the vector $1$ in $L_\mathbb{R}^2(\Gamma^*, \nu)$. Let $Q$ be the orthogonal projection of $L_\mathbb{R}^2(\Gamma^*, \nu)$ onto $\tilde{H}_n$ and consider the tuple of symmetric operators $Y=(Y_1, \dots, Y_d)$ with
	$Y_j = QN_jQ$. For $|\alpha|, |\beta| \leq n$ and $1 \leq j \leq d$, a repetition of the above computation yields:
    \begin{align*}
    \langle Y_j Y^\alpha 1, Y^\beta 1 \rangle =  \int x^{\alpha + \beta + 1_j} e^{- \lambda \cdot x} d\nu(x) 
    &= \int x^{\alpha + \beta + 1_j} e^{- \lambda \cdot x} d\mu(x) \\
    &=  \langle X_j X^\alpha u , X^\beta u \rangle. 
    \end{align*}
	
	Consequently, the $d$-tuples of (non-commutative) symmetric operators $X$ and $Y$ are unitarily equivalent via a unitary from $H_n$ to $\tilde{H}_n$ carrying the vector $u$ into $1$.
	
	On the other hand, the system of multipliers $N$ is commutative, acting on a space of dimension at most $ D = \binom{2n+1 + d}{d}$.
	
	By reversing these observations, we obtain the following algorithm of interpolating the Laplace transform
	$$ F(z) = \int_{\Gamma^*} e^{- z\cdot x} d\mu(x), \ \ z\in T_\Gamma,$$
	by a genuine finite sum of exponential with positive coefficients
	$$ F_n(z) = \int_{\Gamma^*} e^{- z\cdot x} d\nu(x), \ \ z\in T_\Gamma,$$
	satisfying
	$$ \partial_z^\alpha (F-F_n)|_{z=\lambda} = 0, \ \ |\alpha| \leq 2n+1.$$
	
	Explicitly, we seek symmetric completions of the matrices $X_j$ which enter into the factorization of the Hankel kernel associated to $F$ at the point $z = \lambda$:
	$$ N_j = \begin{pmatrix} X_j & V_j\\ V_j^\ast & W_j\end{pmatrix}, \ 1 \leq j \leq d,$$
	acting on a Hilbert space $\mathcal{H}_n = H_n \oplus (\mathcal{H}_n \ominus H_n)$  of dimension $D$, constrained by the conditions: 
        $$ \sigma(N) \subset \Gamma^\ast,$$
	$$ [N_j , N_k ] = 0, \ 1 \leq j,k \leq d,$$
	and
	$$ V_j^\ast X^{\alpha} u = 0, \ \ |\alpha| \leq n-1, \ 1 \leq j \leq d.$$
	Indeed, the second relation implies 
	$$ N^\alpha u = X^\alpha u, \ \ |\alpha| \leq n.$$
	
	The complexity of this simultaneous matrix completion problem is considerably higher than the directional complete monotonic relaxation offered by our Wigner distribution method.
	
In a similar manner with appropriate modifications, we can obtain an algorithm of interpolating the Stieltjes-Fantappi\`e transform 
	$$ \Phi(z, z_0) = \int_{\Gamma^*} \frac{d\mu(x)}{z_0 + z\cdot x}, \ \ (z, z_0) \in T_{\Gamma \times \mathbb{R}_+},$$
	by a genuine finite sum of simple fractions with positive coefficients
	$$ \Phi_n(z) = \int_{\Gamma^*} \frac{d\nu(x)}{z_0 + z\cdot x}, \ \ (z, z_0) \in T_{\Gamma \times \mathbb{R}_+},$$
	satisfying
	$$ \partial_{z_0}^j \partial_z^\alpha (\Phi - \Phi_n)|_{(z, z_0)=(\lambda, 1)} , \ \ |\alpha|+j \leq n,$$ where $\nu$ is a positive, finitely supported measure on $\Gamma^*$.
		
\subsection{Uniqueness of the interpolatory tuples of matrices} Returning to the construction of the directional completely monotone functions matching the prescribed data, we isolate a purely Hilbert space statement which reveals the key properties of the tuples of symmetric matrices $\boldsymbol{X}^{(n)}$ appearing in 	our approximation scheme (Theorem~\ref{th:Lap conv} and Theorem~\ref{Thm:S-F-approximation}).

\begin{proposition}\label{unique-tuple} Let $A = (A_1, \ldots, A_d)$ be a system of symmetric matrices acting on a finite dimensional real Hilbert space $H$ and let $n$ be a positive integer. Suppose a vector $u \in H$ has the spanning property
$$ {\rm span} \{ A^\alpha u \in H : \ |\alpha| \leq n \} = H,$$
where the monomial $A^\alpha$ is taken in any order of the factors, and
$$ A_j A_k v = A_k A_j v, \ \text{ for } \ v \in  {\rm span} \{ A^\alpha u \in H: \ |\alpha| \leq n-1 \}, \ 1 \leq j, k \leq d.$$
Then the Taylor polynomial of degree $2n+1$ in the expansion of \\
$ \langle \exp ( -x \cdot A)u, u \rangle $ at $x=0$ determines $A$ up to unitary equivalence.
\end{proposition}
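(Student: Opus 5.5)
Since $A$ has only the partial commutation property on the lower Krylov layer, I would take the data to be a pair $(A,u)$ satisfying the hypotheses, and uniqueness to mean a unitary $U:H\to H'$ with $Uu=u'$ and $UA_jU^{-1}=A'_j$ for every other such pair $(A',u')$ on $H'$ with the same Taylor polynomial. The plan is to show that the Taylor polynomial of degree $2n+1$ determines the Gram data $\langle A^\alpha u, A^\beta u\rangle$ for $|\alpha|,|\beta|\le n$ and the matrix coefficients $\langle A_j A^\alpha u, A^\beta u\rangle$. Then I would build the unitary by the standard Gram-matrix argument.

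\textbf{Step 1: the words are well defined.} By induction on $|\alpha|\le n$, using the commutation hypothesis on ${\rm span}\{A^\gamma u: |\gamma|\le n-1\}$, every word $M$ in the letters $A_1,\dots,A_d$ with letter counts $\alpha$ satisfies $Mu = A^\alpha u$. The word is independent of the order of the factors. This is exactly the mechanism behind \eqref{Eq:NC-Laplace}. It suffices to check that adjacent transpositions do not change $Mu$: swapping $A_jA_k$ acting on $M'u$, where $M'$ has length $\le n-2 \le n-1$, is allowed by hypothesis. Words of length $\le n$ then reduce to this case by induction.

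\textbf{Step 2: reading off the moments.} Expand $\langle \exp(-x\cdot A)u,u\rangle$ as in \eqref{Eq:NC-sum}. The coefficient of $x^\gamma$ is $\frac{(-1)^{|\gamma|}}{|\gamma|!}\sum_{M\in\mathcal M(\gamma)}\langle Mu,u\rangle$. For $|\gamma|\le 2n+1$, split each word as $M=M_1M_2$ with $M_2$ of length $\le n$ and $M_1$ of length $\le n+1$, and write $M_1 = A_j M_1'$ with $M_1'$ of length $\le n$. Using the symmetry of $A_j$ and Step 1, each term becomes $\langle A^{\beta_1}u, A_jA^{\beta_2}u\rangle$. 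This is the expected difficulty: a single symmetric-matrix term need not be independent of how the word is split. I would first try to show it is, i.e.\ that $\langle A_j A^\alpha u, A^\beta u\rangle = \langle A^{\alpha+1_j}u, A^\beta u\rangle$ when $|\alpha|<n$ or $|\beta|<n$, and that in the remaining extreme case $|\alpha|=|\beta|=n$, symmetry $\langle A_jA^\alpha u,A^\beta u\rangle=\langle A^\alpha u,A_jA^\beta u\rangle$ lets one shift a letter across, giving the shape $\langle A^{\alpha'}u,A^{\beta'}u\rangle$ with $|\alpha'|\le n$. If this shifting fails in the top degree, I would fall back to defining the moment functional $L(x^\gamma)$ as the symmetrized coefficient and arguing, via the spanning property, that the bilinear forms $(\alpha,\beta)\mapsto\langle A_jA^\alpha u,A^\beta u\rangle$ are forced to be the split Hankel forms \eqref{split-kernel} of $L$. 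Either way, the conclusion is that $\langle A^\alpha u,A^\beta u\rangle = (-1)^{|\alpha+\beta|}\frac{\alpha+\beta\,!}{\,}\cdots$, more precisely the appropriate normalization of the coefficient of $x^{\alpha+\beta}$, and likewise for the entries with one extra $A_j$. All of these are determined by the degree $2n+1$ polynomial.

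\textbf{Step 3: the unitary.} Given two pairs $(A,u)$ on $H$ and $(A',u')$ on $H'$ with equal polynomials, define $U A^\alpha u = A'^\alpha u'$ for $|\alpha|\le n$ and extend linearly. Equality of the Gram matrices makes $U$ well defined and isometric, since a linear relation among the vectors $A^\alpha u$ holds iff it has zero norm. Both spanning properties make $U$ onto, so $U$ is unitary with $Uu=u'$. Finally, equality of the forms $\langle A_jA^\alpha u,A^\beta u\rangle$ on a spanning set gives $U^{*}A'_jU=A_j$ for each $j$. This gives unitary equivalence of the tuples, and of the pairs including the cyclic vector.
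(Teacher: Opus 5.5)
Your overall route is the paper's: read off $\langle A^\alpha u,A^\beta u\rangle$ and $\langle A_jA^\alpha u,A^\beta u\rangle$, $|\alpha|,|\beta|\le n$, from the Taylor coefficients, then use the spanning property. Your Step~3 is correct and spells out what the paper leaves implicit; it assumes both pairs satisfy the hypotheses and takes $Uu=u'$. The gap is Step~2, which is the whole content of the proposition.

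The coefficient of $x^\gamma$ is only the average $\frac{(-1)^{|\gamma|}}{|\gamma|!}\sum_{M\in\mathcal M(\gamma)}\langle Mu,u\rangle$. Unless you prove that $\langle Mu,u\rangle$ is the same for every $M\in\mathcal M(\gamma)$, the coefficients of degrees $2n$ and $2n+1$ only determine averages of the numbers you need. These are the Gram entries and the forms $\langle A_jA^\alpha u,A^\beta u\rangle$ with $|\alpha|=|\beta|=n$, and without them Step~3 has nothing to act on. Step~1 as you state it (words of length $\le n$) gives this independence only for $|\gamma|\le 2n-1$.

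You name the difficulty but do not resolve it. Your first attempt stops short: moving a letter across when $|\alpha|=|\beta|=n$ produces a vector $A^{\beta'}u$ with $|\beta'|=n+1$. You have not shown that this vector is independent of the order of factors, nor that the resulting value is independent of where the word is split. The fallback is not an argument: the spanning property carries no information about the values of these bilinear forms, so it cannot force them to equal the split Hankel forms of the symmetrized functional. Also, to produce a factor $A_jM_2u$ you must peel off the letter of $M_1$ adjacent to $M_2$, i.e.\ $M_1=M_1'A_j$. With $M_1=A_jM_1'$ you are left with $M_1'M_2u$, a word of length up to $2n$.

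The missing observation is that the commutation hypothesis already gives order-independence up to length $n+1$. In a word of length $m\le n+1$, an adjacent transposition at positions $(i,i+1)$ (counted from the left) acts on a vector $M'u$ with $|M'|=m-i-1\le n-1$. That vector lies in the span where $A_jA_k=A_kA_j$ by hypothesis. Hence $Mu$ depends only on the letter counts of $M$ whenever $|M|\le n+1$; in particular $A_jA^\alpha u=A^{\alpha+1_j}u$ for $|\alpha|\le n$.

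Now for $|\gamma|\le 2n+1$, split $M\in\mathcal M(\gamma)$ as $M=M_1M_2$ with $|M_1|,|M_2|\le n+1$. Then $\langle Mu,u\rangle=\langle A^{\beta_1}u,A^{\beta_2}u\rangle$ with $\beta_1+\beta_2=\gamma$, and symmetry of $A_k$ gives
\[
\langle A^{\beta_1+1_k}u,A^{\beta_2}u\rangle=\langle A_kA^{\beta_1}u,A^{\beta_2}u\rangle=\langle A^{\beta_1}u,A_kA^{\beta_2}u\rangle=\langle A^{\beta_1}u,A^{\beta_2+1_k}u\rangle,\qquad |\beta_1|,|\beta_2|\le n .
\]
Because $|\gamma|\le 2n+1$, in every admissible split one side has length $\le n$. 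So you can transfer single letters and exchange any letter on one side with any letter on the other. Hence $\langle Mu,u\rangle$ is independent of $M$ and of the split.

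Writing $c_\gamma$ for the coefficient of $x^\gamma$, this gives, for $|\alpha|,|\beta|\le n$,
\[
\langle A^\alpha u,A^\beta u\rangle=(-1)^{|\alpha+\beta|}(\alpha+\beta)!\,c_{\alpha+\beta},\qquad \langle A_jA^\alpha u,A^\beta u\rangle=(-1)^{|\alpha+\beta+1_j|}(\alpha+\beta+1_j)!\,c_{\alpha+\beta+1_j}.
\]
The factorials are multi-index factorials, since $|\mathcal M(\gamma)|=|\gamma|!/\gamma!$. This is exactly the identity the paper's two-line proof asserts ``due to the commutativity assumption''. With it in place, your Steps~2 and~3 go through.
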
 

\begin{proof} Let $\alpha, \beta \in \mathbb{N}_0^d$ with $|\alpha|, |\beta| \leq n.$ Due to the commutativity assumption, the coefficient of $x^{\alpha+\beta + 1_j}$ in the Taylor series of
$ \langle \exp ( -x \cdot A)u, u \rangle $ is 
$\frac{(-1)^{|\alpha + \beta + 1_j|}}{(\alpha + \beta + 1_j)!} \langle A_j A^\alpha u, A^\beta u \rangle.$ Invoking now the spanning property hypothesis, the action of the operator $A_j$ is determined for any vector of $H$.
\end{proof} 

It would be interesting to characterize in function theory terms the entire function $\langle \exp ( -z \cdot A)u, u \rangle $ arising from the above proposition. Also, the compressed resolvents
$\langle (z_0 - z \cdot A)^{-1} u, u \rangle$ associated to such tuples of matrices remain to be identified by some intrinsic properties among all rational functions.

\section{Examples} \label{Sec:Exp}
 
\subsection{A pair of non-commuting projections} \label{Sub:pair-nc-projections}

As described in Theorem~\ref{th:Lap conv}, the approximants of the Laplace transform are related to the Wigner distribution. Below, we elaborate on an example of a Wigner distribution and represent it in terms of the Radon transform, the Fourier transform, and the standard disc distribution.

Consider two PSD non-commutative $2\times 2$ matrices given by
\[
A_1=\begin{pmatrix}2&0\\0&0\end{pmatrix} \,\,\text{and }\, A_2=\begin{pmatrix}1&1\\1&1\end{pmatrix}.
\]
These are homotheties of two orthogonal projections. Consider the linear pencil:
$$
A(x,y)=xA_1+yA_2.
$$
Note that $A$ is PSD for $x,y >0$. Thus, by taking the unit vector $e=(1, 0) \in\mathbb{R}^2$, there exists a Wigner distribution $\mathcal{W}_e(A_1, A_2)$ which, by definition, satisfies the following condition.
\beas 
( \mathcal{W}_e(A_1, A_2)(u, v), \ \phi(xu+yv) ) = \langle \phi(A(x,y)) e, \ e \rangle = [\phi(A(x,y))]_{11},
\eeas
for any Schwartz class function $\phi \in \mathcal E({\mathbb R}^2)$. 

The eigenvalues of $A(x,y)$ are $ x+y \pm r$, where $r=\sqrt{x^2+y^2}.$ Hence, the corresponding spectral projections are $P_\pm=\frac{1}{2}(I\pm N)$, respectively, where $$
N=\frac1r\begin{pmatrix}x&y\\y&-x\end{pmatrix}
\quad\text{satisfies}\quad
N^2=I,
$$
and $A(x,y)=(x+y)I+r N$. Thus, for any $\phi \in \mathcal E(\mathbb{R}^2)$, we obtain
$$
\phi(A(x,y))
=
\frac{\phi(x+y+r)+\phi(x+y-r)}{2}I
+
\frac{\phi(x+y+r)-\phi(x+y-r)}{2}N.
$$
For notational simplicity, denoting the Wigner distribution $\mathcal{W}_e(A_1, A_2)$ by $\mathcal{W}$, we have the following closed form action.
$$ ( \mathcal{W}(u,v) , \phi(xu+yv) )
=
\frac12\left(1+\frac{x}{r}\right)\phi(x+y+r)
+
\frac12\left(1-\frac{x}{r}\right)\phi(x+y-r).
$$

As expected, this formula involves only the evaluation of the test function $\phi$ at points of the spectrum of $A(x,y)$. Below, we identify $\mathcal{W}$ as a distribution via the Fourier and Radon transforms. The first key step involves representing the linear pencil in terms of the Pauli matrices in quantum mechanics.

\vspace{2mm}

\subsubsection{Reduction to Pauli matrices}
Let us recall that the standard form of the $2\times 2$ Pauli matrices are given by
$$
\sigma_x = \begin{pmatrix} 0 & 1 \\ 1 & 0 \end{pmatrix}, \quad
\sigma_y = \begin{pmatrix} 0 & -i \\ i & 0 \end{pmatrix}, \quad
\sigma_z = \begin{pmatrix} 1 & 0 \\ 0 & -1 \end{pmatrix}.
$$
Then, writing $A$ in terms of these matrices, we get that 
\bea\label{eq:pau rep}
A(x,y)=(x+y)I+x\sigma_z+y\sigma_x.
\eea
{The canonical Pauli Wigner distribution $W_{\mathrm P}(s,t)$ is given by
$$
( W_{\mathrm P}, \psi(xs+yt) )
\bydef
\bigl[\psi(x\sigma_z+y\sigma_x)\bigr]_{11}, \ \  \text{for } \psi\in\mathcal E(\R^2),
$$} see for instance Section 4 in \cite{Anderson}.
To connect the two distributions $\mathcal{W}$ and $W_\mathrm P$, for a fixed $\phi\in \mathcal E(\R^2)$, and a fixed point $(x,y)\in\R^2$, we consider the following shifted function on $\R$.
\[
\psi(\lambda)=\phi(x+y+\lambda),\quad\text{for }\lambda\in\R.
\]
Consequently, by \eqref{eq:pau rep}, it follows that
\[
[\phi(A(x,y))]_{11}=\bigl[\psi(x\sigma_z+y\sigma_x)\bigr]_{11}.
\]
As a result, we obtain that
\beas
( \mathcal{W},\phi(xu+yv)) &=&( W_{\mathrm P},\psi(xs+yt) )\\
&=& ( W_{\mathrm{P}}, \phi(x+y+xs+yt) )\\
&=& ( W_{\mathrm{P}}, \phi(x(s+1)+y(t+1)) ),
\eeas
for any $\phi\in \mathcal E(\R^2)$.
Now, by applying the affine translation 
$$
T(s,t)=(s+1,t+1),\quad \text{for }(s,t)\in\R^2,
$$
we get that, in the sense of distribution
\bea\label{eq:final form}
\nonumber \mathcal{W}(u,v) &=& TW_{\mathrm P}(u,v)\\
&=& W_{\mathrm P}(u-1,v-1).
\eea
Using this relation, we can represent the Fourier transform of $W$ explicitly, which yields $\mathcal W$ by taking the inverse Fourier transform.
\vspace{2mm}

\subsubsection{Representation via the Fourier transform}
Recall that, for a compactly supported distribution $T$ on $\R^2$, the Fourier transform of $T$ is given by
\[
\widehat{T}(\xi,\eta)=( T(u,v), e^{\,i(\xi u+\eta v)} ), \quad \text{for } (\xi,\eta)\in\R^2.
\]
By \cite{SW}, it follows that $W_\mathrm P$, and hence by \eqref{eq:final form}, $\mathcal{W}$ has compact support. 
We first consider the Fourier transform of $W_\mathrm P$.
\[
\widehat{W_{\mathrm{P}}}(\xi,\eta) = ( W_{\mathrm{P}}(s,t), e^{i(\xi s + \eta t)} ) = \bigl[e^{i(\xi\sigma_z+\eta\sigma_x)}\bigr]_{11}, \quad \text{for } (\xi,\eta)\in\R^2.
\]
To compute the matrix exponential, note that $[\xi\sigma_z+\eta\sigma_x]^2=r^2I$, where $r^2=\xi^2+\eta^2$. Thus,
\beas
e^{i(\xi\sigma_z+\eta\sigma_x)}& = & \cos(r)I+i\frac{\sin(r)}{r} (\xi\sigma_z+\eta\sigma_x).
\eeas
Hence, 
\[
\widehat{W_{\mathrm{P}}}(\xi,\eta)=\bigl[e^{i(\xi\sigma_z+\eta\sigma_x)}\bigr]_{11}= \cos(r) + i\xi\frac{\sin(r)}{r},  \quad \text{for } (\xi,\eta)\in\R^2.
\]
Furthermore, from the relation in \eqref{eq:final form}, it follows that
\[
\widehat{\mathcal{W}}(\xi,\eta) = e^{i(\xi+\eta)}\,\widehat{W_{\mathrm{P}}}(\xi,\eta),  \quad \text{for } (\xi,\eta)\in\R^2.
\]
Hence,
\[
\widehat{\mathcal{W}}(\xi,\eta) =e^{\,i(\xi+\eta)}
\left(
\cos\sqrt{\xi^2+\eta^2}
+
i\,\xi\,\frac{\sin\sqrt{\xi^2+\eta^2}}{\sqrt{\xi^2+\eta^2}}
\right), \quad \text{for } (\xi,\eta)\in\R^2.
\]
Now, by taking the inverse Fourier transform of $W_\mathrm P$, we get that
\[
W_{\mathrm P}(s,t)
=
-\frac{1+s}{2\pi}\,
\operatorname{Pf}\!\left(1-s^2-t^2\right)_+^{-3/2},\quad\text{for }(s,t)\in\R^2,
\]
where $\operatorname{Pf}$ denotes the Hadamard finite part integral. This also shows that the support of $W_\mathrm P$ is the closed unit disc in $\R^2$. Finally, by \eqref{eq:final form}, we get the following explicit form of $\mathcal W$. 
\[
\mathcal W(u,v)=-\frac{u}{2\pi}\,
\operatorname{Pf}\!\left(1-(u-1)^2-(v-1)^2\right)_+^{-3/2},\quad \text{for }(u,v)\in\R^2.
\]
Furthermore, the support of $\mathcal W$ is the closed unit disk centered at $(1,1)$. 

\vspace{2mm}

\subsubsection{Representation via the Radon Transform}
Another way to characterize the Wigner distribution is via its Radon transform. We adopt the standard convention for the Radon transform of a compactly supported distribution $T$ on $\mathbb{R}^2$, defined by its action on a function $\psi\in\mathcal E(\R^2)$:
\bea\label{eq:radon defn}
\langle \mathcal R T(\omega,\cdot),\psi\rangle
=
( T(z),\psi(\omega\cdot z) ),
\quad
\text{for }\omega=(\omega_1,\omega_2)\in \mathbb S^1,
\eea
where $z=(s,t)\in\R^2$.
First, we compute the Radon transform of the Pauli Wigner distribution $W_P$.
Note that, writing $(x,y)=r(\omega_1,\omega_2)$, where $r=(x^2+y^2)^{\frac{1}{2}}$ and $(\omega_1,\omega_2)\in\mathbb S^1$, we can write the above definition as follows:
\bea\label{eq:Radon polar}
( T , \psi(xu+yv) )
=
( \mathcal R T(\omega,\cdot), q\mapsto \psi(r q) ),\quad \text{for } q\in \R.
\eea
Also, $x\sigma_z+y\sigma_x=r(\omega_1\sigma_z+\omega_2\sigma_x)$, and the matrix $\omega_1\sigma_z+\omega_2\sigma_x$ has eigenvalues $\pm1$. Its spectral projections are given by
$$
P_\pm(\omega)=\frac12\bigl(I\pm(\omega_1\sigma_z+\omega_2\sigma_x)\bigr).
$$
Hence, taking the $(1,1)$-entry of $P_\pm$, we get that
\[
\bigl[\psi(x\sigma_z+y\sigma_x)\bigr]_{11}
=
\frac{1+\omega_1}{2}\,\psi(r)
+
\frac{1-\omega_1}{2}\,\psi(-r).
\]
Hence by \eqref{eq:Radon polar}, we obtain that for $(\omega,q)\in\mathbb S^{1}\times \R$,
\bea\label{eq:Radon pauli}
\mathcal RW_{\mathrm P}(\omega,q)
=
\frac{1+\omega_1}{2}\,\delta(q-1)
+
\frac{1-\omega_1}{2}\,\delta(q+1),
\eea
where $\delta$ denotes the Dirac delta distribution. 

Finally, as $\mathcal W(u,v)=W_{\mathrm P}(u-1,v-1)$, it follows that 
\[
\mathcal R \mathcal{W}(\omega,q)=\mathcal RW_{\mathrm P}(\omega,q-\omega\cdot (1,1)),\quad \text{for }(\omega,q)\in\mathbb S^{1}\times \R.
\]

Consequently,
$$
\mathcal R \mathcal{W}(\omega,q)
=
\frac{1+\omega_1}{2}\,\delta(q-\omega_1-\omega_2-1)
+
\frac{1-\omega_1}{2}\,\delta(q-\omega_1-\omega_2+1), \quad \text{for }(\omega,q)\in\mathbb S^{1}\times \R.
$$

\vspace{2mm}

\subsubsection{Representation via the standard disc distribution} 
The standard disc distribution for a disc of radius $R$ and centered at the origin, denoted by $\mathbb D(0,R)$, is given by
\beas
G_R(s,t) \bydef \frac{H(R^2 - s^2 - t^2)}{2\pi\sqrt{R^2 - s^2 - t^2}},\quad \text{for } (s,t)\in \mathbb D(0,R),
\eeas
where $H$ is the Heaviside step function.
The Radon transform of $G_R$ is given by
\beas
\mathcal{R}G_R(\omega,q) = \frac{1}{2}H(R^2 - q^2), \quad \text{for }(\omega,q)\in\mathbb S^{1}\times \R.
\eeas
Furthermore, since the distributional derivative $H'(x)=\delta(x)$, it follows that
\bea\label{eq:rad deriv}
\mathcal R\ltt \partial_R G_R\rtt(\omega,q)=\partial_R\ltt \mathcal{R}G_R\rtt (\omega,q)= R\delta(R^2-q^2).
\eea
Also, by the \eqref{eq:radon defn}, it follows that, for any $\psi\in\mathcal E(\R^2)$,
\beas
\langle \mathcal{R}(\partial_s G_R)(\omega, \cdot), \psi \rangle &=& \langle \partial_s G_R(z), \psi(\omega \cdot z) \rangle\\
&=&- \langle G_R(z), \partial_s [\psi(\omega \cdot z)] \rangle\\
&=&- \omega_1 \langle G_R(z), \psi'(\omega \cdot z) \rangle\\
&=& - \omega_1 \langle \mathcal{R}G_R(\omega, \cdot), \psi' \rangle\\
&=& \langle \omega_1 \partial_q \mathcal{R}G_R(\omega, \cdot), \psi \rangle.
\eeas
This implies that 
\beas
\mathcal{R}(\partial_s G_R)(\omega,q) &=& \omega_1 \partial_q \mathcal{R}G_R\\
&=&  \omega_1 q \delta(R^2-q^2), \quad \text{for }(\omega,q)\in\mathbb S^{1}\times \R. 
\eeas
Combining this with \eqref{eq:rad deriv} and \eqref{eq:Radon pauli} we get that
\beas
\mathcal{R}\ltt(\partial_R-\partial_s) G_R|_{R=1}\rtt(\omega,q)&=&{\frac{1+\omega_1}{2}\delta(q-1) + \frac{1-\omega_1}{2}\delta(q+1)}\\
&=& \mathcal R(W_\mathrm P)(\omega,q),\quad \text{for }(\omega,q)\in\mathbb S^{1}\times \R.
\eeas
Hence, by the injectivity of the Radon transform, it follows that
\beas
W_\mathrm P(s,t)=(\partial_R-\partial_s) G_R|_{R=1}(s,t), \quad \text{for }(s,t)\in\mathbb D(0,1).
\eeas
Finally, using \eqref{eq:final form}, we get that
$$
\mathcal W(u,v) = \left(\partial_R - \partial_u) G_R(u-1, v-1) \right|_{R=1},\quad \text{for }(u,v)\in\mathbb D(0,1).
$$

\vspace{2mm}

\subsubsection{Commutative dilation of $A_1$ and $A_2$} 
Note that, the $4 \times 4$ PSD matrices
\begin{align*}
  \tilde{A}_1 = \begin{bmatrix}
      2 & 0 & 2 & 0 \\ 0 & 0 & 0 & 0\\ 2 & 0 & 2 & 0 \\ 0 & 0 & 0 & 0
  \end{bmatrix} \text{ and } \tilde{A}_2 =  \begin{bmatrix}
      1 & 1 & -1 & -1 \\ 1 & 1 & -1 & -1\\ -1 & -1 & 1 & 1 \\ -1 & -1 & 1 & 1
  \end{bmatrix}
\end{align*}
are commuting dilations of $A_1$ and $A_2$, respectively. For $\tilde{e} = (1, 0, 0, 0)$ in $\mathbb{R}^4$, there exists a Wigner distribution $\tilde{W}_{\tilde{e}}(u,v)$ satisfying:
\beas 
( \tilde{W}_{\tilde{e}}(u, v), \ \phi(xu+yv) ) &=& [\phi(\tilde{A}_1 x + \tilde{A}_2 y)]_{11},
\eeas
for any $\phi \in \mathcal E({\mathbb R}^2)$. But $\tilde{W}_{\tilde{e}}$ being a compactly supported distribution, we can choose $\phi$ to be a smooth unbounded function. In particular, 
$$
g(x, y) \bydef ( \tilde{W}_{\tilde{e}}(u, v),\  \exp(-xu -yv) )= [\exp(-x\tilde{A}_1 - y \tilde{A}_2)]_{11}.
$$
Again, $\tilde{A}_1 \tilde{A}_2 = \tilde{A}_2 \tilde{A}_1 =0$. So, the power series expansion of $\exp(-x\tilde{A}_1 - y \tilde{A}_2)$ around the origin is given by
$$
\exp(-x\tilde{A}_1 - y \tilde{A}_2) = I_4 -x\tilde{A}_1 - y \tilde{A}_2 + \frac{1}{2} x^2 \tilde{A}_1^2 + \frac{1}{2} y^2 \tilde{A}_2^2 + \dots.
$$
Therefore, the degree $1$ Taylor polynomial of $g$ around the origin is $$1-2x - y$$ which is the same as the degree $1$ Taylor polynomial of the function 

\noindent $( \mathcal{W}_e(u, v),\  \exp(-xu -yv) )$ around the origin. 

{\em This illustrates how the compressions of $\tilde{A}_1$ and $\tilde{A}_2$ provide a linear approximation around the origin for the completely monotone function $g$ on $\mathbb{R}^2_+$}.

\bigskip

The \emph{characteristic function} of a cone $\Gamma$ is defined by
$$\varphi(z) = \int_{\Gamma^\ast} e^{-z\cdot x} \, dx_1 dx_2, \quad z \in T_\Gamma.$$
The characteristic function plays a central role in the theory of Hardy and Bergman spaces on symmetric tube domains \cite[Chapter I]{Faraut-Koranyi}. In the following examples, we analyze $\varphi$ from our interpolation and approximation perspective when $\Gamma$ is the positive orthant and the Lorentz cone.

\subsection{The positive orthant} 
The characteristic function for the positive quadrant $\mathbb{R}^2_+$ is $\varphi(x, y) = \frac{1}{xy}$. Since $\mathbb{R}^2_+$ is a product space endowed with a product Lebesgue measure, applying the standard one-dimensional Gaussian quadrature rule to each coordinate axis yields the appropriate finitely determined interpolants in both the Laplace and Stieltjes–Fantappi\`e settings. So, we omit the explicit computations involving tensor products of Laguerre orthogonal polynomials and their associated Jacobi matrices.

\subsection{The Lorentz cone}
Let us look at our approximation scheme on the so-called Lorentz cone, defined as
$$
\Gamma = \{x=(x_1, x_2, x_3)\in \mathbb{R}^3 : \ x_1^2 -x_2^2 -x_3^2 \geq 0 \ \text{ and } x_1 \geq 0 \}.
$$
It is a symmetric cone and so, $\Gamma^* = \Gamma$. The characteristic function for this cone is:
$$\varphi(p) = \int_\Gamma e^{- p \cdot x} \, dx$$

Fix a point $\lambda = (1, 0, 0) \in \operatorname{int} \Gamma$.
It can be shown that $\varphi(\lambda) =2\pi$ and 
$$\varphi(x, y, z) = 2\pi\ (x_1^2 - x_2^2 - x_3^2)^{-3/2}.$$
The Taylor polynomial of degree $2$ of $\varphi$ is 
\begin{align*}
   1 - 3(x-1) + 6(x_1-1)^2 + \frac{3}{2}x_2^2 + \frac{3}{2}x_3^2.
\end{align*}

Now we compute the bounded holomorphic interpolant (on the tube domain $T_\Gamma$) which matches the Taylor coefficients of $\varphi$ at $\lambda$ up to order $2$ in two ways. The first one is the scheme described in Theorem \ref{th:Lap conv}, and the second one is a heuristic approach where we find a Wigner distribution yielding an interpolant. 

\vspace{2mm}

\subsubsection{Via compressed coordinate multipliers}
Consider the space 
$$H_2 = \operatorname{span}_\mathbb{R}\{x^\alpha \exp(-\lambda \cdot x/2) : |\alpha| \leq 2 \}$$ and the PSD Hankel kernel 
$$
\Xi = (\Xi_{\alpha, \beta})_{|\alpha|, |\beta| \leq 2}, \ \ \Xi_{\alpha, \beta} = (-1)^{|\alpha + \beta|} \partial^{\alpha + \beta}_p\varphi|_{p=\lambda}.
$$
Then $\operatorname{rank} \Xi =10$ and so, the inner product on $H_2$ given by
$$
\langle x^\alpha \exp(-\lambda \cdot x/2) , x^\beta \exp(-\lambda \cdot x/2)  \rangle = \Xi_{\alpha, \beta} , \ \ |\alpha|, |\beta| \leq 2, 
$$
makes $H_2$ a real Hilbert space of dimension $10$. 
By Gram-Schmidt orthogonalization we get an orthonormal basis $\{e_j : 1\leq j \leq 10\}$ of $H_2$, where
\begin{align*}
    & e_1 = \frac{1}{\sqrt{2\pi}} e^{-x_1/2},\, e_2 = \frac{1}{\sqrt{6\pi}} (x_1 - 3) e^{-x_1/2},\, e_3 = \frac{1}{\sqrt{48\pi}} (x_1^2 - 8x_1 + 12) e^{-x_1/2}, \\
    & e_4 = \frac{1}{\sqrt{120\pi}} (x_2^2 - x_3^2) e^{-x_1/2}, \, e_5 = \frac{1}{\sqrt{60\pi}} \left(x_2^2 + x_3^2 - \frac{1}{2}x_1^2\right) e^{-x_1/2}, \\
    & e_6 = \frac{1}{\sqrt{6\pi}} x_2 e^{-x_1/2}, \, e_7 = \frac{1}{\sqrt{30\pi}} x_2(x_1 - 5) e^{-x_1/2}, \,  e_8 = \frac{1}{\sqrt{6\pi}} x_3 e^{-x_1/2}, \\
    & e_9 = \frac{1}{\sqrt{30\pi}} x_3(x_1 - 5) e^{-x_1/2}, \, e_{10} = \frac{1}{\sqrt{30\pi}} x_2x_3 e^{-x_1/2}.   
\end{align*}

Then the matrix representation of the compressions $X^{(2)}_j = P_{H_2} X_j |_{H_2}$ for $1\leq j \leq 3$ is as below:
$$ X^{(2)}_1 = \begin{bmatrix}
3 & \sqrt{3} & 0 & 0 & 0 & 0 & 0 & 0 & 0 & 0 \\
\sqrt{3} & 5 & 2\sqrt{2} & 0 & 0 & 0 & 0 & 0 & 0 & 0 \\
0 & 2\sqrt{2} & 7 & 0 & 0 & 0 & 0 & 0 & 0 & 0 \\
0 & 0 & 0 & 7 & 0 & 0 & 0 & 0 & 0 & 0 \\
0 & 0 & 0 & 0 & 7 & 0 & 0 & 0 & 0 & 0 \\
0 & 0 & 0 & 0 & 0 & 5 & \sqrt{5} & 0 & 0 & 0 \\
0 & 0 & 0 & 0 & 0 & \sqrt{5} & 7 & 0 & 0 & 0 \\
0 & 0 & 0 & 0 & 0 & 0 & 0 & 5 & \sqrt{5} & 0 \\
0 & 0 & 0 & 0 & 0 & 0 & 0 & \sqrt{5} & 7 & 0 \\
0 & 0 & 0 & 0 & 0 & 0 & 0 & 0 & 0 & 7
\end{bmatrix}, 
$$
$$ 
X^{(2)}_2= \begin{bmatrix}
0 & 0 & 0 & 0 & 0 & \sqrt{3} & 0 & 0 & 0 & 0 \\
0 & 0 & 0 & 0 & 0 & 2 & \sqrt{5} & 0 & 0 & 0 \\
0 & 0 & 0 & 0 & 0 & \frac{1}{\sqrt{2}} & \sqrt{10} & 0 & 0 & 0 \\
0 & 0 & 0 & 0 & 0 & \sqrt{5} & 2 & 0 & 0 & 0 \\
0 & 0 & 0 & 0 & 0 & \sqrt{\frac{5}{2}} & \sqrt{2} & 0 & 0 & 0 \\
\sqrt{3} & 2 & \frac{1}{\sqrt{2}} & \sqrt{5} & \sqrt{\frac{5}{2}} & 0 & 0 & 0 & 0 & 0 \\
0 & \sqrt{5} & \sqrt{10} & 2 & \sqrt{2} & 0 & 0 & 0 & 0 & 0 \\
0 & 0 & 0 & 0 & 0 & 0 & 0 & 0 & 0 & \sqrt{5} \\
0 & 0 & 0 & 0 & 0 & 0 & 0 & 0 & 0 & 2 \\
0 & 0 & 0 & 0 & 0 & 0 & 0 & \sqrt{5} & 2 & 0
\end{bmatrix}, 
$$
and
$$
X^{(2)}_3= \begin{bmatrix}
0 & 0 & 0 & 0 & 0 & 0 & 0 & \sqrt{3} & 0 & 0 \\
0 & 0 & 0 & 0 & 0 & 0 & 0 & 2 & \sqrt{5} & 0 \\
0 & 0 & 0 & 0 & 0 & 0 & 0 & \frac{1}{\sqrt{2}} & \sqrt{10} & 0 \\
0 & 0 & 0 & 0 & 0 & 0 & 0 & -\sqrt{5} & -2 & 0 \\
0 & 0 & 0 & 0 & 0 & 0 & 0 & \sqrt{\frac{5}{2}} & \sqrt{2} & 0 \\
0 & 0 & 0 & 0 & 0 & 0 & 0 & 0 & 0 & \sqrt{5} \\
0 & 0 & 0 & 0 & 0 & 0 & 0 & 0 & 0 & 2 \\
\sqrt{3} & 2 & \frac{1}{\sqrt{2}} & -\sqrt{5} & \sqrt{\frac{5}{2}} & 0 & 0 & 0 & 0 & 0 \\
0 & \sqrt{5} & \sqrt{10} & -2 & \sqrt{2} & 0 & 0 & 0 & 0 & 0 \\
0 & 0 & 0 & 0 & 0 & \sqrt{5} & 2 & 0 & 0 & 0
\end{bmatrix}. 
$$

Let $L=(x-\lambda) \cdot \boldsymbol{X}^{(2)}$.
We are interested in the function 
$$F_2(x_1, x_2, x_3) \bydef \langle \exp(-L) \exp(-\lambda \cdot x /2), \exp(-\lambda \cdot x /2) \rangle = 2\pi \langle \exp(-L) e_1, e_1 \rangle.$$
Expand $\exp(-L)$ into its Taylor series 
$$\sum_{n=0}^\infty \frac{(-1)^n}{n!} ((x_1-1)X^{(2)}_1 + x_2 X^{(2)}_2 + x_3 X^{(3)}_3 )^n.$$ Then the coefficient of $(x_1-1)^j x^k_2 x^l_3$ in the Taylor expansion of the $(1,1)$ entry of $\exp(-L)$ at $\lambda$ is $\frac{(-1)^{j+k+l}}{(j+k+l)!}\langle S e_1, e_1 \rangle$, where $S$ is the sum of words of length $j+k+l$ with $j$ many $X^{(2)}_1$, $k$ many $X^{(2)}_2$ and $l$ many $X^{(2)}_3$ as letter. In this way, we obtain the Taylor expansion of $\langle \exp(-L) e_1, e_1 \rangle$ at $\lambda$ as 
$$
 1 - 3(x_1-1) + 6(x_1-1)^2 + \frac{3}{2}x_2^2 + \frac{3}{2}x^2_3 - 10(x_1-1)^3 - \frac{15}{2}(x_1-1)x^2_2 - \frac{15}{2}(x_1-1)x_3^2 + \dots.
$$
Again, we compute the derivatives of $\varphi$ at $\lambda$ to get
$\varphi(\lambda) = 2\pi$, $\frac{\partial \varphi}{\partial x_1}(\lambda) = -6\pi$, $\frac{\partial \varphi}{\partial x_2}(\lambda) = 0$, $\frac{\partial \varphi}{\partial x_3}(\lambda) = 0$, $\frac{\partial^2 \varphi}{\partial x_1^2}(\lambda) = 24\pi$, $\frac{\partial^2 \varphi}{\partial x_1 x_2}(\lambda) = 0 $, $\frac{\partial^2 \varphi}{\partial x_2 x_3}(\lambda) = 0$ and so on. Thus, we see that the Taylor coefficients of $F_2$ and $\varphi$ at $\lambda$ are the same up to order $5$.

In this case the interpolant $2\pi \langle \exp(-L)\ e_1, e_1 \rangle$ is obtained by multiplying $2\pi$ with the Wigner distribution $\mathcal{W}_{e_1}(\boldsymbol{X}^{(2)})$ associated with $\boldsymbol{X}^{(2)}$.

\vspace{2mm}

\subsubsection{Via an appropriate Wigner distribution}\label{Subsub:Lorentz}
Here we find heuristically a triple of  symmetric matrices constrained by a prescribed  Taylor polynomial of degree $2$ of the Laplace transform evaluated at $\lambda$.
Specifically we define:
$$
A_1= \left[\begin{matrix}
    3 & 0 & 0 & \sqrt{3} \\ 0 & 2 & 0 & 0\\ 0 & 0 & 2 & 0 \\ \sqrt{3} & 0 & 0 & 2
\end{matrix} \right], \, 
A_2=\left[\begin{matrix}
    0 & \sqrt{3} & 0 & 0 \\ \sqrt{3} & 0 & 0 & 0\\ 0 & 0 & 0 & 0 \\ 0 & 0 & 0 & 0
\end{matrix} \right], 
A_3=\left[\begin{matrix}
   0 & 0 & \sqrt{3} & 0 \\ 0 & 0 & 0 & 0\\ \sqrt{3} & 0 & 0 & 0 \\ 0 & 0 & 0 & 0
\end{matrix} \right].
$$
An elementary algebraic argument shows that that the joint numerical range of the triple $A=(A_1, A_2, A_3)$ lies inside $\Gamma$. Let $\mathcal{W}_{e_1}$ be the Wigner distribution associated to $A$ and the standard unit vector $e_1$ of $\mathbb{R}^4$. Then we infer as before:
$$
(\mathcal{W}_{e_1}(y), \  \exp(- (x-\lambda)\cdot y)) ) = \langle \exp(-L) e_1, e_1 \rangle,
$$
where $L = (x_1 -1)A_1 + x_2 A_2 + x_3 A_3.$
We are looking for the Taylor polynomial of $2\pi (\exp(-L))_{11}$, the $(1, 1)$-entry of $2\pi \exp(-L)$. Thus, it is enough to look at the Taylor series for $2\pi \exp(-L)$ around the origin up to degree $2$ which is 
$$2\pi (I - L + \frac{1}{2}L^2).$$
Calculating the above sum we see that the Taylor polynomial of $2\pi [\exp(-L)]_{11}$ at the origin is 
$$2\pi - 6\pi(x_1-1) + 12\pi (x_1-1)^2 +3\pi x_2^2 + 3\pi x_3^2,$$
and it perfectly matches with the degree $2$ Taylor polynomial of the characteristic function $\varphi$ at $\lambda$.

\section{Concluding remarks}

\begin{remark}
The Borel transform and its inverse allow one to transition back and forth between the Laplace and Stieltjes–Fantappi\`e transforms. Therefore, it is natural to ask whether there is a passage between the corresponding interpolants. Due to the different nature of the wandering vectors in the Krylov subspaces associated to the jets of an interior point $\lambda$ of $\Gamma$, or $(\lambda, 1)$ of $\Gamma \times \mathbb{R}_+$, we do not expect a straightforward solution to this question.

There is, however, one notable exception: if the jet data is prescribed at the vertex of the cone (i.e., $\lambda = 0$), implying that the original measure admits all moments, then the real Hilbert space $H_n$ reduces to the space spanned by the monomials $\{x^\alpha : |\alpha| \leq n \}$ in both Subsections~\ref{Subsec:Laplace-interpolation} and \ref{Subsec:Fantappie-interpolation}.
Consequently, the symmetric linear $d$-tuples $\mathbf{X}^{(n)}$ are identical in both cases, and the vectors simplify to $u = \exp(-\lambda \cdot x / 2) = 1 = v$. The Wigner distribution $\mathcal{W}_u(\mathbf{X}^{(n)})$ then yields the expected approximants:
$$
F_n(z) = \langle \mathcal{W}_u(\mathbf{X}^{(n)}), e^{-z\cdot x} \rangle = \langle \exp(-z\cdot \mathbf{X}^{(n)}) u, u \rangle, \quad z \in T_\Gamma,
$$
and 
\begin{align*}
    \Phi_n(z, z_0) &= \int_{0}^\infty e^{-\tau z_0} F_n(\tau z) \, d\tau \\
    &= \left\langle \left( \int_{0}^\infty \exp\bigl(-\tau (z_0 I + z\cdot \mathbf{X}^{(n)})\bigr) \, d\tau \right) u , u \right\rangle \\
    &= \langle (z_0 I + z\cdot \mathbf{X}^{(n)})^{-1} u, u \rangle, \quad (z, z_0) \in T_{\Gamma \times \mathbb{R}_+}.
\end{align*}

\end{remark}
 
\begin{remark} The commutative tuple dilation appearing at the end of the first toy example is ad-hoc. In general, the size of the matching commutative dilation is much bigger, cf. Section~\ref{Sec:Comm-dilation}. Similarly, the triple of $4 \times 4$ non-commuting matrices matching via their Wigner transform the data in the Lorentz case example is also ad-hoc.

This suggests that factoring the Hankel kernel can be bypassed by other techniques aimed at constructing an interpolation represented by Laplace transforms of Wigner distributions. Specifically, an alternative way to extract the non-commuting tuple $A=(A_1, \dots, A_d)$ from a given Taylor coefficient data of their Laplace transform in the sense of Weyl's calculus:
$$ \langle \exp ( - x\cdot A) u, u\rangle = \sum_{|\alpha| \leq 2n+1} c_\alpha x^\alpha + O(|x|^{2n+2})$$
is not excluded, at least in particular situations. Proposition~\ref{unique-tuple} isolates the key sought properties of such a system of matrices.
\end{remark} 

\begin{remark} The present article has exclusively focused on function theory on tube domains, symmetric or not. A continuation of our analysis on bounded symmetric domains is natural and necessary. As an early step, an investigation of Fantappi\`e transform of positive measures supported by the unit ball in ${\mathbb C}^d$ is contained in \cite{McCP}. Some non-trivial uniform bounds of Weyl's operational calculus on the joint numerical range of a tuple of non-commuting operators are obtained there.

\end{remark}

\begin{remark} The additional spectral argument $z_0$ in the Stieltjes-Fantappi\`e transform invites to investigate a continued fraction expansion with variable coefficients, of the formal power series
$$ \int_{\Gamma^\ast}\frac{d\mu(x)}{z_0 + z\cdot x} \sim \sum_k \frac{c_k(z)}{z_0^{k+1}}.$$
To our knowledge such a study and comparison with known multivariable Pad\'e type approximation techniques, such as \cite{Guillaume-Huard}, is not known.

\end{remark}

\noindent \textbf{Acknowledgement:} Several AI platforms (Gemini, ChatGPT and Claude) have partially contributed to some symbolic computations and numerical examples contained in this article. We are grateful to Prof. Tirthankar Bhattacharyya for valuable comments leading to an improvement of the presentation of this work.

The research of the first author is supported by ANRF-NPDF (Grant No. PDF/2025/002836).
He is also thankful for the generous support provided by the SPARC (Project No. 1698) at the initial stage of this work at the Indian Institute of Science, Bengaluru. The research of the second author is supported by the institute post-doctoral fellowship of the Indian Institute of Technology Palakkad, and was partially supported at the initial stage of this work by the DST FIST program-2021 [TPN-700661] and the ANRF Grant WEA/2023/000017.

\end{document}